\DeclareMathOperator{\im}{Im}
\DeclareMathOperator{\re}{Re}
\DeclareMathOperator{\U}{U}
\DeclareMathOperator{\SU}{SU}
\DeclareMathOperator{\SO}{SO}
\DeclareMathOperator{\Stab}{Stab}
\newtheorem{theorem}{Theorem}[section]
\theoremstyle{plain}
\newtheorem{assumption}{Assumption}
\newtheorem{ansatz}{Ansatz}
\newtheorem{corollary}[theorem]{Corollary}
\newtheorem{example}{Example}
\newtheorem{lemma}[theorem]{Lemma}
\newtheorem{proposition}[theorem]{Proposition}
\newtheorem{remark}[theorem]{Remark}
\title{Lagrangian Translating Solitons and Special Lagrangians in $\mathbb{C}^{m}$ with Symmetries}
\author{Wei-Bo Su}
\address{National Center for Theoretical Sciences, Mathematics Division, Taipei 106319, Taiwan}
\email{weibo.su@ncts.tw}
\author{Albert Wood}
\address{Department of Mathematics, The Chinese University of Hong Kong, Ma Liu Shui, Hong Kong}
\email{albertwood@math.cuhk.edu.hk}
\date{}
\begin{document}

\begin{abstract}
We construct novel families of exact immersed and embedded Lagrangian translating solitons and special Lagrangian submanifolds in $\mathbb{C}^m$ that are invariant under the action of various admissible compact subgroups $G \leq 
\SU(m-1)$ with cohomogeneity-two. These examples are obtained via an Ansatz generalising a construction of Castro--Lerma in $\mathbb{C}^2$. We give explicit examples of admissible group actions, including a full classification for $G$ simple. 

We also describe novel Lagrangian translators symmetric with respect to non-compact subgroups of the affine special unitary group $\SU(m)\ltimes \mathbb{C}^m$, including cohomogeneity-one examples.
\end{abstract}

\maketitle

\section{Introduction}

Special Lagrangian submanifolds of Calabi-Yau manifolds play a central role in the fields of symplectic geometry and mirror symmetry, as the expected mirrors of holomorphic bundles via the SYZ conjecture. As volume-minimising calibrated submanifolds, they are also of interest in differential geometry and geometric analysis. Their classification and construction are therefore of fundamental interest to a wide variety of fields. Since Smoczyk's discovery that the mean curvature flow preserves the class of Lagrangian submanifolds, conjectures have been made about long-time existence and convergence of the flow to special Lagrangian representatives, and \textit{Lagrangian mean curvature flow} has become a rich field of study.

Of particular importance is singularity analysis for the flow, particularly in the zero-Maslov\footnote{A smooth Lagrangian is \textit{zero-Maslov} if the cohomology class of the mean-curvature one form is trivial.} case. It is conjectured that singularities of zero-Maslov Lagrangian mean curvature flow in Calabi-Yau manifolds must be modelled on translating solitons or special Lagrangians in $\mathbb{C}^m$. For example, a criterion for a singularity model to be a translating soliton is given by \cite{LSSz24}, and the possibility of shrinking solitons as singularity models was excluded by work of Wang \cite{Wang2001} and Neves \cite{Neves2007}. Investigation and classification of special Lagrangians and Lagrangian translating solitons in $\mathbb{C}^m$ is therefore of great importance to the study of Lagrangian mean curvature flow, and its related conjectures.

In this paper, we construct new Lagrangian translating solitons and special Lagrangians in $\mathbb{C}^{m}$ invariant under symmetry groups $G \leq \SU(m-1)$ with cohomogeneity at most two.\footnote{See Section \ref{sec-liegroupactions} for the definition of cohomogeneity-$k$.} Soliton solutions to LMCF with cohomogeneity-one symmetry were studied extensively in Madnick-Wood \cite{Wood2022}; in particular for a \textit{linear} group action $G \leq \SU(m)$ the cohomogeneity-one special Lagrangians, Lagrangian shrinkers and Lagrangian expanders were classified, and it was shown that there are no cohomogeneity-one Lagrangian translators. The main theme in the cohomogeneity-one theory is that after symmetry reduction, the defining equations of the solitons reduce to second-order ODEs satisfied by curves in a profile plane $\mathbb{C}\simeq\mathbb{R}^{2}$. In the cohomogeneity-two case, we show that after symmetry reduction, the equations become PDEs satisfied by \emph{Lagrangian surfaces} in $\mathbb{C}^{2}$ (equations \eqref{eq-cohom2translator}, \eqref{eq-cohom2SL}). The PDEs can be viewed as weighted versions of the special Lagrangian and Lagrangian translator equations in $\mathbb{C}^{2}$, which are still difficult to solve. However, we take advantage of the complex 2-dimensional nature of the reduced problems and construct explicit solutions to the weighted equations using a \emph{generalised Castro--Lerma Ansatz}, inspired by the work of Castro--Lerma \cites{Castro2010, CL14}. 

The Ansatz is as follows:

\begin{ansatz}\label{ans-1}
    Let $m \geq 2$, $I_{1}, I_{2}\subseteq\mathbb{R}$ be open intervals and $\gamma:I_{1}\to\mathbb{C}$, $\xi:I_{2}\to\mathbb{C}$ be smooth curves, such that $\gamma$ and $\xi$ satisfy
    \begin{equation}\label{eq-curveeqsintro}
        \vec{\kappa}_{\gamma} - (m-2)\frac{\gamma^{\perp}}{|\gamma|^{2}} = a\gamma^{\perp},\quad \vec{\kappa}_{\xi} - (m-2)\frac{\xi^{\perp}}{|\xi|^{2}} = -a\xi^{\perp},
    \end{equation}
    for some $a\in\mathbb{R}$. Given a subgroup $G \leq \SU(m-1)$ and a linear embedding $\Psi:\mathbb{C}^2 \to \mathbb{C}^m$, define the maps
    \begin{align}
       \check F:I_1 \times I_2 &\to \mathbb{C}^2, \quad &&\check F(x,y) \, :=\, \left( \gamma(x)\xi(y), \, \int_0^y \overline {\xi(s)} \xi'(s)\, ds \, - \, \int_0^x \overline {\gamma(s)} \gamma'(s)\,ds \right) \label{eq-checkF}\\
       F:G \times I_1 \times I_2 &\to \mathbb{C}^m, &&F(g, x, y) := g \cdot \Psi\circ \check F(x,y). \label{eq-F}
    \end{align}
\end{ansatz}
\noindent We note that if $m=2$, then Ansatz \ref{ans-1} reduces to the construction of Castro--Lerma.

For suitable $G$ and $\Psi$, this Ansatz produces $G$-invariant Lagrangian translators and special Lagrangians. To state this precisely, we need a couple of definitions. For a subgroup $G \leq \SU(m-1)$, we define the \textit{standard moment map}\footnote{This is a moment map for the $G$-action on $\mathbb{C}^{m-1}$ in the sense of Hamiltonian actions, after identifying $\mathfrak{g}$ with $\mathfrak{g}^*$ via an Ad-invariant inner product.} for $G$ to be
\begin{equation} \label{eq-standardmomentmap}
    \mu:\mathbb{C}^{m-1} \to \mathfrak{g}, \quad \mu(z) = -\frac{1}{2}\pi_\mathfrak{g}(iz\overline{z}^T).
\end{equation} 
We say that a compact, connected subgroup $G \leq \SU(m-1)$ is \textit{admissible} if $\mu^{-1}(0)$ contains a cohomogeneity-one Lagrangian. Examples of admissible groups include $\SO(m-1)$ and the maximal torus $T^{m-2}$ (see Examples \ref{ex-so(m-1)}, \ref{ex-t(m-2)} for details).
\begin{theorem}\label{thm-main}
    For admissible $G$, there exists a linear embedding $\Psi: \mathbb{C}^2 \to \mathbb{C}^m$ such that:
    \begin{itemize}
        \item If $a>0$, then the image of $F$ in Ansatz \ref{ans-1} is an exact Lagrangian translator.
        \item If $a=0$, then the image of $F$ in Ansatz \ref{ans-1} is an exact special Lagrangian.
    \end{itemize}
\end{theorem}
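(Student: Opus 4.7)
\emph{Choice of $\Psi$ and Lagrangian property.} The plan is to exploit the fibration structure of $M:=F(G\times I_{1}\times I_{2})$ as a family of $G$-orbits over the $2$-surface $\Psi\circ\check{F}$, decoupling the Lagrangian angle and soliton condition into a fibre piece (controlled by admissibility) and a base piece (controlled by the ODEs \eqref{eq-curveeqsintro}). Admissibility produces a cohomogeneity-one Lagrangian $L_{0}\subset\mu^{-1}(0)\subset\mathbb{C}^{m-1}$; I would pick a principal unit vector $p_{0}\in L_{0}$. Writing $\mathbb{C}^{m}=\mathbb{C}^{m-1}\oplus\mathbb{C}$ with $G$ acting on the first factor, define $\Psi(w_{1},w_{2}):=(w_{1}p_{0},w_{2})$. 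The key consequence of $\mu(p_{0})=0$, extracted by pairing the expression $\mu(p_{0})=-\tfrac{1}{2}\pi_{\mathfrak{g}}(ip_{0}p_{0}^{\ast})$ against $\eta\in\mathfrak{g}$, is the Hermitian-orthogonality $\langle\eta p_{0},p_{0}\rangle=0$ for every $\eta\in\mathfrak{g}$. With this, I would verify isotropy of $M$ at $(e,x,y)$ on the spanning set $\{\eta\cdot\gamma(x)\xi(y)\,p_{0}:\eta\in\mathfrak{g}\}\cup\{F_{x},F_{y}\}$: isotropy among $G$-variations follows from $\mu(\gamma\xi p_{0})=|\gamma\xi|^{2}\mu(p_{0})=0$; the pair $(F_{x},F_{y})$ is isotropic by the Castro--Lerma cancellation using $|p_{0}|=1$; and mixed pairings reduce to multiples of $\langle\eta p_{0},p_{0}\rangle=0$.

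\emph{Lagrangian angle and soliton equations.} Next, I would pull back $\Omega=dz_{1}\wedge\cdots\wedge dz_{m}$ to $M$. In an adapted Hermitian basis of $\mathbb{C}^{m}$ placing $\mathfrak{g}p_{0}$ inside $(\mathbb{C}\, p_{0})^{\perp}\subset\mathbb{C}^{m-1}$ (possible thanks to the Hermitian-orthogonality above), the matrix of the tangent frame becomes block-diagonal after permutation, and one obtains
\[
F^{\ast}\Omega \,=\, C\,(\gamma\xi)^{m-2}\det\!\begin{pmatrix}\gamma'\xi & \gamma\xi'\\ -\overline{\gamma}\gamma' & \overline{\xi}\xi'\end{pmatrix}d\mathrm{vol}_{M} \,=\, C\,(\gamma\xi)^{m-2}(|\gamma|^{2}+|\xi|^{2})\gamma'\xi'\,d\mathrm{vol}_{M},
\]
where $C\in\mathbb{R}$ encodes the oriented volume of $G\cdot p_{0}$ inside $(\mathbb{C}\, p_{0})^{\perp}$. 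Hence, modulo an additive constant, the Lagrangian angle decouples as $\theta=\theta_{\gamma}(x)+\theta_{\xi}(y)$, where $\theta_{\gamma}:=(m-2)\arg\gamma+\arg\gamma'$ and $\theta_{\xi}:=(m-2)\arg\xi+\arg\xi'$. A direct differentiation using $\arg z=\im\log z$ and the standard planar curvature formula shows that the ODEs \eqref{eq-curveeqsintro} are equivalent to
\[
\theta_{\gamma}'(x) = -a\im(\overline{\gamma}\gamma')(x), \qquad \theta_{\xi}'(y) = a\im(\overline{\xi}\xi')(y).
\]
Since the last coordinate of $F$ is $F_{m}=\int_{0}^{y}\overline{\xi}\xi'\,ds-\int_{0}^{x}\overline{\gamma}\gamma'\,ds$, these two identities combine to $d\theta=a\,d(\im F_{m})$ on $M$, which is exactly the Lagrangian translator equation $\vec{H}=T^{\perp}$ with $T=-ae_{m}$ when $a>0$, and the special Lagrangian equation $d\theta=0$ when $a=0$. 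Exactness of $M$ can then be verified by a direct computation of $F^{\ast}\lambda$ using the explicit integrals in the definition of $\check{F}$.

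\emph{Main obstacle.} The principal technical step will be the block-sparse calculation of $F^{\ast}\Omega$ in the adapted Hermitian basis, together with the verification that the constant $C$ is real and nonzero, so that the argument of $F^{\ast}\Omega$ is governed entirely by $\theta_{\gamma}+\theta_{\xi}$ up to an additive constant. Once this is in place, everything reduces to elementary manipulation of the ODEs \eqref{eq-curveeqsintro} and the explicit formulas defining $F$.
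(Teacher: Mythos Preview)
Your approach is correct and matches the paper's: both choose $\Psi$ via a point $p_{0}$ in a cohomogeneity-one Lagrangian inside $\mu^{-1}(0)$, use $\mu(p_{0})=0$ (equivalently $\langle\eta p_{0},p_{0}\rangle=0$ for $\eta\in\mathfrak{g}$) to establish isotropy, compute the Lagrangian angle via a block determinant producing the factor $(\gamma\xi)^{m-2}$, and then integrate the curve ODEs \eqref{eq-curveeqsintro} to obtain the translator and special Lagrangian equations. The paper organizes this more modularly---Propositions~\ref{prop-correspondence} and~\ref{prop-lagangle} and Corollary~\ref{cor-cohom2translators} set up the reduction for arbitrary cohomogeneity-two Lagrangians in $\mu^{-1}(0)\times\mathbb{C}$, and Propositions~\ref{prop-lagrangianansatz}--\ref{prop-translatoransatz} then handle the Ansatz---whereas you compute directly on $F$, but the substantive steps coincide. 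One small clarification on your ``main obstacle'': you only need $C\neq 0$, not $C\in\mathbb{R}$, since $\arg C$ is an additive constant in $\theta$ and is harmless for both the translator equation $\theta+\im z_{m}=\text{const}$ and the special Lagrangian equation $\theta=\text{const}$; nonvanishing of $C$ follows because the orbit tangent space at $p_{0}$ is a Lagrangian $(m-2)$-plane in $(\mathbb{C}p_{0})^{\perp}\cong\mathbb{C}^{m-2}$.
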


For example, if $G = \SO(m-1)$ then we can choose $\Psi$ to be $\Psi_{\SO(m-1)}(z_1, z_2) := (z_1, 0,\ldots, 0, z_m)$, and if $G = T^{m-2}$ then we can choose $\Psi$ to be $\Psi_{T^{m-2}}(z_1, z_2) := (z_1, z_1, \ldots, z_1, z_2)$. 
Here are some examples of novel and existing special Lagrangians and translators produced by this Ansatz with these choices for $G$. 

\begin{example}[Figure \ref{fig-d1} and   \ref{fig-d2}]\label{ex-d}
    Let $a = 1$, and choose $\gamma(x)$ to be a line through the origin and $\xi(x) = \tilde \sigma(x)$ to be the profile curve of an Anciaux shrinker\footnote{A closed solution $\xi$ to \eqref{eq-curveeqsintro} for $a>0$, originally studied by Anciaux in \cite{Anciaux2006}. See Figure \ref{fig-shrinker} for an example.}. The image of the function $F$ given by Ansatz \ref{ans-1} with $G = \SO(m-1)$ and $\Psi = \Psi_{\SO(m-1)}$ is an embedded $\SO(m-1)$-invariant Lagrangian translator in $\mathbb{C}^m$. It is periodic in the $\emph{Im}(z_m)$-direction, and may be thought of as a Lagrangian translating helicoid.
\end{example}

\begin{example}[Figure \ref{fig-e}]\label{ex-e}
    Let $a = 1$, and choose $\gamma(x) = \tilde\epsilon(x)$ to be the profile curve of an Anciaux expander\footnote{A closed solution $\gamma$ to \eqref{eq-curveeqsintro} for $a>0$. See Figure \ref{fig-expander} for an example.} and $\xi(y)$ to be a line. The image of $F$ with $G = \SO(m-1)$ and $\Psi = \Psi_{\SO(m-1)}$ is an almost-calibrated embedded $\SO(m-1)$-invariant Lagrangian translator in $\mathbb{C}^m$. This recovers the symmetric Joyce--Lee--Tsui translator of \cite{Joyce2010}.
\end{example}

\begin{example}\label{ex-f}
    Let $a =1$, and let $\gamma(x) = \tilde\sigma(x)$ and $\xi(y) = \tilde \epsilon(x)$ be profile curves of an Anciaux shrinker and expander respectively. The image of the function $F$ with $G = \SO(m-1)$ or $T^{m-2}$ and $\Psi = \Psi_{\SO(m-1)}$ or $\Psi_{T^{m-2}}$ is an immersed $\SO(m-1)$ or $T^{m-2}$-invariant Lagrangian translator in $\mathbb{C}^m$. It is unbounded in the $\text{Im}(z_m)$-direction, and not almost-calibrated.
\end{example}

\begin{example}[Figure \ref{fig-b}]\label{ex-b}
    Let $a=0$. The solution $\gamma(x) = \tilde l$ to equation \eqref{eq-curveeqsintro} is the profile curve of a symmetric Lawlor neck.\footnote{The Lawlor neck is the unique exact special Lagrangian in complex Euclidean space asymptotic to two planes, first discovered by Lawlor \cite{Lawlor1989}.} If $\gamma(x)$ is a line and and $\xi(x) = \tilde l$, the image of the function $F$ given by Ansatz \ref{ans-1} with $G = \SO(m-1)$ and $\Psi = \Psi_{\SO(m-1)}$ is an embedded $\SO(m-1)$-invariant special Lagrangian submanifold in $\mathbb{C}^m$. This recovers an example of Joyce \cite[Theorem 7.3]{Joyce01}, also obtainable as a limit of Example 2.
\end{example}

\begin{example}[Figure \ref{fig-c}]\label{ex-c}
    Let $a=0$. If we choose $\gamma(x) = \tilde l_1$ and $\xi(x) = \tilde l_2$ both as Lawlor neck profile curves, the image of the function $F$ given by Ansatz \ref{ans-1} with $G = \SO(m-1)$ or $T^{m-2}$ and $\Psi = \Psi_{\SO(m-1)}$ or $\Psi_{T^{m-2}}$ is an embedded $\SO(m-1)$ or $T^{m-2}$-invariant special Lagrangian submanifold in $\mathbb{C}^m$.
\end{example}

\begin{figure}[p!]
  \centering
  \begin{subfigure}[t]{0.48\textwidth}
    \centering
    \includegraphics[width=\linewidth]{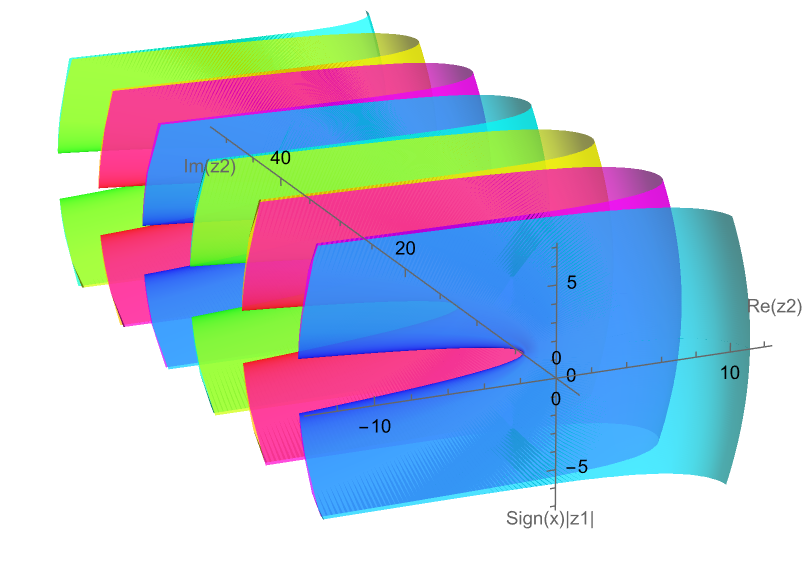}
    \caption{}
    \label{fig-d1}
  \end{subfigure}
  \hfill
  \begin{subfigure}[t]{0.48\textwidth}
    \centering
    \includegraphics[width=\linewidth]{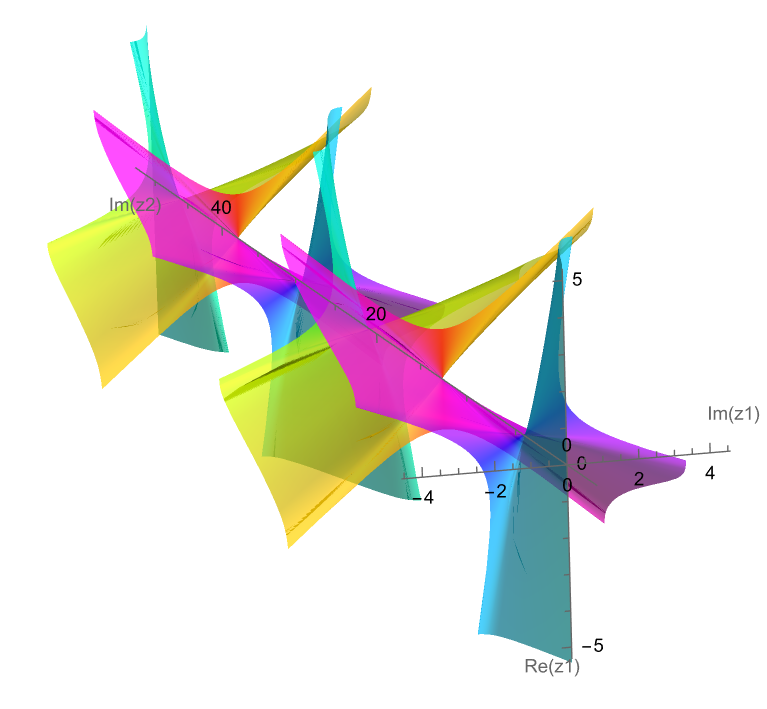}
    \caption{}
    \label{fig-d2}
  \end{subfigure}
  \caption{Two views of the profile surface $\check F_{(d)}$ in $\mathbb{C}^2$ (equation \eqref{eq-Fd}), given by equation \eqref{eq-checkF} with $\gamma(x)$ a line and $\xi(y)$ the shrinker profile in Figure \ref{fig-shrinker}. The surface has topology $\mathbb{R}^2$, and is periodic in the $\text{Im}(z_2)$ direction. The colour of the surfaces corresponds to the argument of the first complex coordinate $z_1$, and is the same in both diagrams.\newline
  By applying Ansatz \ref{ans-1}, this surface in $\mathbb{C}^2$ corresponds to a $G$-invariant Lagrangian translator in $\mathbb{C}^3$, translating in the $\text{Re}(z_3)$ direction (corresponding to $\text{Re}(z_2)$ in the images).}
\end{figure}

\begin{figure}[p!]
  \centering
  \begin{subfigure}[t]{0.48\textwidth}
    \centering
    \includegraphics[width=\linewidth]{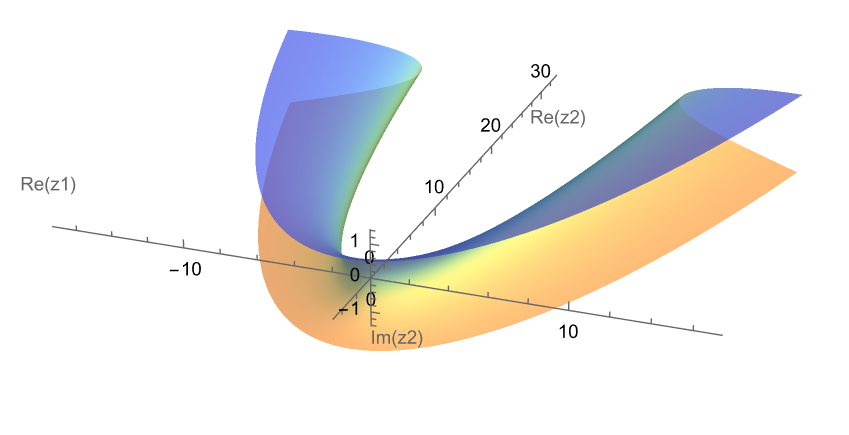}
    \caption{}
    \label{fig-b}
  \end{subfigure}
  \hfill
  \begin{subfigure}[t]{0.48\textwidth}
    \centering
    \includegraphics[width=\linewidth]{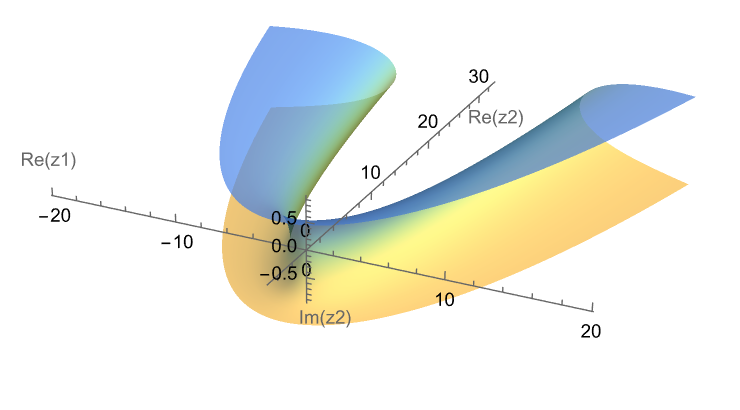}
    \caption{}
    \label{fig-e}
  \end{subfigure}
  \caption{The profile surfaces $\check F_{(b)}$ (equation \eqref{eq-Fb}) and $\check F_{(e)}$ (equation \eqref{eq-Fe}). They produce $G$-invariant special Lagrangians and Lagrangian translators (respectively) in $\mathbb{C}^3$ via Ansatz \ref{ans-1}. For the surface $F_{(e)}$, the choice of expander profile curve $\tilde \epsilon$ is shown in Figure \ref{fig-expander}. The colour of the surfaces corresponds to the argument of the first coordinate $z_1$ in both cases; the slight difference in colour range reflects the difference in the angles spanned by the Lawlor neck profile curve and Anciaux expander profile curve. }
\end{figure}

\begin{SCfigure}[0.6][h]  
  \centering
  \includegraphics[width=0.5\textwidth]{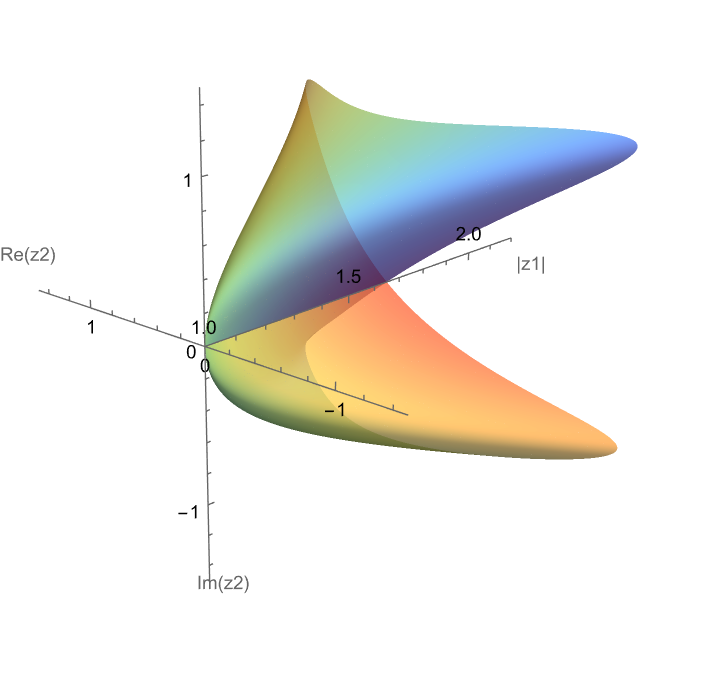}
  \caption{The profile surface $\check F_{(c)}$ (equation \eqref{eq-Fc}). It produces $G$-invariant special Lagrangians via Ansatz \ref{ans-1}. The colour of the surface corresponds to the argument of the first coordinate $z_1$ as in Figure 2. Though the surface appears immersed, one can see by the colour that it is an embedded surface in $\mathbb{C}^2$. 
  }
  \label{fig-c}
\end{SCfigure}

There are many other admissible subgroups $G\leq \SU(m-1)$ which may be used with this Ansatz, including several more infinite families of arbitrarily high dimension. A classification of admissible \textit{simple} subgroups $G$ is given in the table of Bedulli-Gori \cite[Table 1]{Bedulli2008} (where $n := m-1$). A non-simple example is given by the action of $\mathbb{S}^1 \times \SO(p) \times \SO(q)$ on $\mathbb{C}^{p+q}$ (see \cite[Example 1]{Bryant2004}, \cite[Section 7.3]{Wood2022} for discussion).

A full classification of the different special Lagrangians and translators produced by Ansatz \ref{ans-1} with a description of their properties is given in Sections \ref{sec-classification1} to \ref{sec-classification3}. In general, whether the examples produced are immersed, embedded or almost-calibrated depends on the choices of $G$, $\gamma$ and $\xi$. In particular, we may find $G$-invariant analogues of the embedded special Lagrangian of Example \ref{ex-c}, and the immersed translator of Example \ref{ex-f}. We also find $G$-invariant analogues of the translators of Examples \ref{ex-d} and \ref{ex-e} and the embedded special Lagrangian of Example \ref{ex-b}, although they have a one-dimensional singular set for $G \neq \SO(m-1)$. To the authors' knowledge, for $G \neq \SO(m-1)$, the translators and special Lagrangians produced by Ansatz \ref{ans-1} are all new examples. We note that our Ansatz does not produce any new complete, smoothly embedded, almost-calibrated translating solitons.

We also consider translators with symmetry groups other than linear subgroups of $\SU(m-1)$. In Section \ref{sec-affinegroups}, we show that symmetry groups of Lagrangian translators naturally lie in $\text{ASU}(n-1)$, the affine special unitary group. In Section \ref{sec-gaffine} we consider affine symmetry groups of the form $G \ltimes P$ for an isotropic subspace $P$, and show that Ansatz \ref{ans-1} applies in this case as well. In Section \ref{sec-moreexamples} we give examples of translators which are invariant under more exotic `screw-type' symmetry groups.  The simplest `screw-type' symmetry is the $\widetilde{U(1)}$-action on $\mathbb{C}^{m}$ given by
\begin{align*}
    \theta\longmapsto \left(e^{i\lambda_{1}\theta}z_1, \cdots, e^{i\lambda_{m-1}\theta}z_{m-1}, z_{m}-i\left(\textstyle\sum_{j=1}^{m-1}\lambda_{j}\right)\theta\right),
\end{align*}
and examples invariant under this kind of symmetry were constructed by Castro--Lerma \cite[Corollary 2]{Castro2010} and Konno \cite[Section 3.2]{Konno19}. We generalize the group action to $\widetilde{U(1)}^{m-1}$ acting on $\mathbb{C}^{m}$ (for demonstration we only consider $m=3$), and construct a family of Lagrangian translating solitons fibered over $\mathbb{R}^{m}$, with the central (singular) fiber being Hamiltonian stationary. Our examples are cohomogeneity-one with respect to their symmetry group, which contrasts with the non-existence result of Madnick--Wood for translators which are cohomogeneity-one with respect to a linear $G$-action (\cite[Theorem 5.10]{Wood2022}).

As well as the works already mentioned, constructions of novel Lagrangian translators include those of Joyce--Lee--Tsui \cite{Joyce2010}, Nakahara \cite{Nakahara13} and Su \cite{Su2019}, and Lagrangian mean curvature flow with symmetries has been previously studied by Pacini \cite{Pacini2002}, Savas-Halilaj-- Smoczyk \cite{Savas-Halilaj2018}, Konno \cite{Konno19}, Lotay--Oliveira \cites{Lotay2020, Lotay2023} Ochiai \cite{Ochiai2021}, Viana \cite{Viana2021}, Su \cite{Su2020} and Wood \cite{Wood2019}. 

\subsection*{Derivation of the Ansatz}

The Ansatz consists of two key parts: the immersion $\check F$ and the linear embedding $\Psi$; the Lagrangian is then given by the $G$-orbit of $\Psi \circ \check F$. Details of the construction of $\Psi$ are given in Section \ref{sec-4}.\footnote{In the notation of that chapter, $\Psi = (\Phi \oplus \text{Id})^{-1}$.} The key observation is that $G$-invariant Lagrangians lie in $\mu^{-1}(X) \times \mathbb{C}$ for some $X \in \mathfrak{g}$. The geometry of $\mu^{-1}(0)$ is special: it is the $m$-dimensional orbit of a complex line $P \subset \mathbb{C}^{m-1}$ under the $G$ action. Cohomogeneity-two Lagrangians $L \subset \mu^{-1}(0) \times \mathbb{C}$ are then characterised by the intersection $\check L := L \cap (P \times \mathbb{C})$. We choose the map $\Psi:\mathbb{C}^2 \to \mu^{-1}(0) \subset \mathbb{C}^m$ as the embedding of the complex space $P \times \mathbb{C}$ into $\mathbb{C}^m$, where we carefully identify $P$ with $\mathbb{C}$ to allow comparison of the Lagrangian angle. The special Lagrangian and translator conditions for $L$ then correspond to the PDEs \eqref{eq-cohom2translator}, \eqref{eq-cohom2SL} on $\check L$.

The construction of the Lagrangian map $\check F:I_1 \times I_2 \to \mathbb{C}^2$ is an adaptation of the Castro--Lerma Ansatz in \cite{Castro2010, CL14}, where the first coordinate $z_1$ is assumed to be the product of curves $\gamma(x)\xi(y)$, and the second coordinate is chosen so that $\check F$ satisfies the Lagrangian condition. Equations \eqref{eq-cohom2translator}, \eqref{eq-cohom2SL} then become the conditions \eqref{eq-curveeqsintro} of the Ansatz. A full derivation is  given in Section \ref{sec-ansatz}, in which Theorem \ref{thm-main} is stated precisely as Theorem \ref{thm-slagsandtranslators}.

\subsection*{Acknowledgments} This work was completed when the first named author was a postdoctoral researcher at National Center for Theoretical Sciences (NCTS), and during the second named author's postdoctoral research fellowships at King's College London and the Chinese University of Hong Kong, following a funded research visit to National Center for Theoretical Sciences. The first named author is grateful for the stimulating research environment provided by NCTS. The second named author was supported by a Simons Collaboration Grant on “Special holonomy in Geometry, Analysis and Physics", and by research grants from the Research Grants Council of the Hong Kong Special Administrative Region, China [Project No.: CUHK 14304121, CUHK 14305122 and CUHK 14307123]. We thank these institutions for their support.

\section{Preliminaries}

Throughout, we work in $\mathbb{C}^m$ for $m \geq 2$, equipped with its usual Calabi-Yau structure: the Euclidean metric $g_0$, the K\"ahler form $\omega_0 = \sum_{i=1}^m dx_i \wedge dy_i$, and the holomorphic volume form $\Omega_0 := dz_1 \wedge\ \ldots \wedge dz_m$.

\subsection{Lagrangian Translators}\label{sec-lagtrans}

Let $F:L\to\mathbb{C}^{m}$ be an oriented Lagrangian immersion, with unit volume form $\text{vol}_L$. The immersion is said to be \textit{zero-Maslov} if there exists a function $\theta: L \to \mathbb{R}$ satisfying $F^*(\Omega_0) = e^{i\theta}\text{vol}_L$. In this case, we have the differential relation $J\nabla \theta = \vec{H}$, where $\vec{H}$ is the mean curvature vector of the immersion $F$. 

$F$ is called a \textit{special Lagrangian submanifold} if $\vec{H} = 0$, equivalently if $\theta = \overline\theta$ for some $\overline\theta\in\mathbb{R}$. Such submanifolds are calibrated by the form $\text{Re}(e^{-i\overline\theta}\Omega_0)$, and therefore volume-minimising \cite{HL82}. If instead the range of $\theta$ is less than $\pi$, we say the Lagrangian is \textit{almost-calibrated}. 

$F$ is called  a \textit{Lagrangian translating soliton} (in the $e_m$ direction) if $\vec{H}  = e_{m}^{\perp}$ (where $e_{m} = (0, \cdots, 0, 1)$). The name comes from the fact that for such an $F$, the translating family of immersions $F_t := F + t e_m$ satisfies mean curvature flow. As in the special Lagrangian case, we may integrate to a scalar equation involving $\theta$:
\begin{align}\label{eq-translator}
    \theta(F) + \langle F, Je_{m}\rangle = c
\end{align}
for some $c\in\mathbb{R}$. By a translation in the $Je_{m}$-direction, we may assume $c = 0$. Thus, the imaginary part of the last coordinate function $F_{m}$ is equal to $-\theta(F)$.

It was noted in \cite{Su2019a} that for $f = -2\langle F, e_m\rangle$, the weighted volume $e^{-\frac{f}{2}}\text{vol}_{L}$ of a Lagrangian translating soliton is calibrated by $\re\Omega_{f}$, where in our case
\begin{align}
    \Omega_{f} := e^{z_{m}}dz_{1}\wedge\cdots\wedge dz_{m} \, = \, e^{z_m} \Omega_0.
\end{align}
Hence, an equivalent condition for $F:L\to\mathbb{C}^{m}$ being a Lagrangian translating soliton is
\begin{align}
    F^{*}\omega = F^{*}\im\Omega_{f} = 0. \label{eq-translatorcalibration}
\end{align}

\subsection{Lie Group Actions}\label{sec-liegroupactions}

Consider a group action $G \circlearrowright M$ for $M$ a  manifold, say of dimension $n$. For a point $z \in M$, we denote the \textit{orbit} by $\mathcal{O}_z := \{g \cdot z \, : \, g \in G\}$, and the \textit{isotropy subgroup} or \textit{stabiliser} by $G_z = \Stab(z) := \{g \in G\,:\, g\cdot z = z\}$. For $X \in \mathfrak{g}$, we denote the \textit{fundamental vector field} associated to $X$ by $\rho(X)_z := \frac{\partial}{\partial t}|_{t=0} (\exp(-tX) \cdot z)$. 

We say $z \in M$ has \textit{isotropy type $(H)$} for a subgroup $H \leq G$ if the isotropy group $G_z$ of $z$ is conjugate to $H$. Given $H \leq G$, we denote the set of points in $M$ with isotropy type $H$ by $M_{(H)}$. We note that the orbit $\mathcal{O}_z$ is diffeomorphic to the quotient $G / G_z$, and in particular $\text{dim}(\mathcal{O}_z) = \text{dim}(G) - \text{dim}(G_z)$.

We will say $M$ is \textit{cohomogeneity-$k$} if every point of $M$ has the same isotropy type $(H)$ (i.e. $M = M_{(H)}$) and $\text{dim}(G/H) = n-k$. This implies that all orbits in $M$ have dimension $n-k$. We remark that our notion of cohomogeneity-$k$ is slightly stricter than that of some other authors, who may allow orbits of different types, and `singular orbits' of lower dimensions.

Now assume $(M, \omega)$ is symplectic, and $G \circlearrowright M$ is a compact Hamiltonian group action\footnote{For an introduction to Hamiltonian group actions and moment maps, see \cite{DaSilva2006}.} with moment map $\mu:M \to \mathfrak{g}$, where $\mathfrak{g}$ and $\mathfrak{g}^*$ are identified via an $\text{Ad}$-invariant inner product (e.g. $M = \mathbb{C}^m$, $G \leq U(m)$ and $\mu:\mathbb{C}^m \to \mathfrak{g}$ the standard moment map of equation \eqref{eq-standardmomentmap}). We have the following useful result regarding the level sets of the moment map.

\begin{theorem}[Principal Orbit Theorem for Moment Map Level Sets]\label{thm-principalorbitthm}
    Let $X \in \mathfrak{g}$, and let $Z \subset \mu^{-1}(X)$ be a connected component of the level set of $\mu$ at $X$.
    \begin{itemize}
        \item For each conjugacy class $(K)$ of $G$, $Z_{(K)}$ is a smooth submanifold of $M$.
        \item There exists a unique conjugacy class $(H)$ of $G$ such that $Z_{(H)}$ is open, connected and dense in $Z$.
        \item If $(K) \neq (H)$ and $Y$ is a connected component of $Z_{(K)}$, then $\dim(Y) < \dim(Z_{(H)})$.
    \end{itemize}
\end{theorem}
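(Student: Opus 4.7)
The plan is to reduce the theorem to the classical Principal Orbit Theorem for compact group actions on smooth manifolds (Bredon), applied to a suitable group acting on $Z$. The key observation is that by equivariance of the moment map, $\mu(g \cdot z) = \mathrm{Ad}_g\mu(z)$, so the stabiliser $G_X := \{g \in G \,:\, \mathrm{Ad}_g X = X\}$ preserves $\mu^{-1}(X)$, and consequently $Z$. Moreover, for any $z \in Z$ the isotropy $G_z$ fixes $X = \mu(z)$ and therefore lies in $G_X$. Hence, the stratification of $Z$ by $G$-isotropy type coincides with the stratification by $G_X$-isotropy type (one checks that two subgroups $H_1, H_2 \leq G_X$ that are conjugate in $G$ via some $g$ are in fact conjugate in $G_X$, using that $G_X \cdot z_0$ is the full fibre of $\mathcal{O}_{z_0} \to \mathcal{O}_{X}$ for any $z_0 \in Z$).

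The main obstacle is that $Z$ is in general not a smooth manifold, because $X$ need not be a regular value of $\mu$, so the standard Principal Orbit Theorem cannot be invoked directly. To handle this, I would work locally using the Marle--Guillemin--Sternberg symplectic slice theorem. At any point $z_0 \in Z$ with isotropy $H$, this theorem yields a $G$-equivariant symplectomorphism between a neighbourhood of the orbit $G \cdot z_0$ and a model space of the form $G \times_H (\mathfrak{h}^\circ \oplus V)$, where $V$ is a symplectic $H$-representation (the symplectic slice) and $\mathfrak{h}^\circ$ is an Ad-invariant complement to $\mathfrak{h}$ in $\mathfrak{g}_X^*$. In this model the moment map has an explicit form, and $\mu^{-1}(X)$ corresponds locally to $G_X \times_H (V^{\mathrm{quad}})$, where $V^{\mathrm{quad}} \subset V$ is the zero set of the quadratic shifting moment map of the $H$-action on $V$.

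Working in this local model, the isotropy stratification of $Z$ near $z_0$ becomes the isotropy stratification of $V^{\mathrm{quad}}$ by the linear $H$-action, crossed with the free $G_X/H$-direction. Each stratum is smooth because $H$-isotropy types stratify the linear $H$-representation $V$ smoothly, and intersection with the real-algebraic set $V^{\mathrm{quad}}$ preserves smoothness on each stratum (one verifies transversality of $\mu|_{V}$ to $X$ on each isotropy stratum by a direct Lie-algebra calculation). This proves the first bullet. The second bullet follows because the principal isotropy type of the linear $H$-action on the connected slice $V^{\mathrm{quad}}$ (which is connected by a standard argument on the normal slice for a connected component of a moment map level set) is unique, open, and dense --- this is the classical fact applied to the slice representation. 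Gluing these local statements over $Z$ using connectedness of $Z$ gives a globally unique principal type $(H)$ whose stratum $Z_{(H)}$ is open, connected, and dense in $Z$.

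For the final bullet, on each local model the stratum of isotropy $(K) \neq (H)$ is a proper linear subvariety of the slice, and strictly smaller dimension than the principal stratum by the classical result for linear $H$-representations; this persists globally. The routine steps I would delegate are the verification that the MGS normal form pulls back the given moment map to its standard form (purely formal), and the bookkeeping identifying $G$-conjugacy classes of isotropies with $G_X$-conjugacy classes within $Z$. The genuinely hard step is organising the local-to-global argument when $X$ is a critical value of $\mu$, which is exactly why the slice theorem, rather than the implicit function theorem, is indispensable here.
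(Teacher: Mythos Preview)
Your approach is correct and is essentially the argument underlying the reference the paper cites. The paper's own proof is simply a pointer to Sjamaar--Lerman's stratification of singular symplectic reductions: the claims are stated to follow from \cite[Theorem~2.1, Remark~5.10]{Sjamaar1991}. What you have sketched---the Marle--Guillemin--Sternberg local model, reduction to the quadratic moment map on the symplectic slice, smoothness of each isotropy stratum via transversality on strata, and the local-to-global passage for openness, density and connectedness of the principal stratum---is precisely the content of those results in Sjamaar--Lerman. So your route and the paper's are the same at the level of ideas; you are reconstructing the cited proof rather than invoking it as a black box.

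One point worth noting: you correctly flag the $G$ versus $G_X$ issue (only $G_X$ preserves $\mu^{-1}(X)$, yet the statement is phrased in terms of $G$-isotropy types), and you outline the fix. The paper's one-line citation does not address this; in the applications later in the paper $X$ is always coadjoint-fixed (indeed usually $X=0$), so $G_X=G$ and the distinction evaporates. Your treatment is therefore slightly more careful than the paper's at this point. The one place where your sketch is thin is the global connectedness of the principal stratum: local density plus connectedness of $Z$ gives uniqueness and density of $(H)$, but connectedness of $Z_{(H)}$ itself requires the local connectedness of the stratification (each point of $Z$ has a neighbourhood meeting $Z_{(H)}$ in a connected set), which is what Sjamaar--Lerman's Remark~5.10 supplies. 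You gesture at this but it deserves an explicit sentence.
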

\begin{proof}
     The claims follow from the stratification of $Z / G$ by orbit types (see \cite[Theorem 2.1]{Sjamaar1991}). For the second claim in particular, see \cite[Remark 5.10]{Sjamaar1991}.
\end{proof}

\subsection{The Affine Unitary Group}

Of particular importance to this work is the \textit{affine unitary group}, defined as the semidirect product
\[ \text{AU}(m) = \U(m) \ltimes \mathbb{C}^n = \{ (A, t) \, : \, A \in \U(m), \, t \in \mathbb{C}^n\}\]
with group product $(A_1, t_1) \cdot (A_2, t_2) = (A_1 A_2, A_1 t_2 + t_1)$. 
$\text{AU}(m)$ has a natural action on $\mathbb{C}^m$, given by $(A, t)\cdot z := Az + t$.
We note that by the Mazur-Ulam theorem, any group action preserving $(g_0, \omega_0)$ must be a subgroup of $\text{AU}(m)$:

\begin{lemma}
    If $G \circlearrowright \mathbb{C}^m$ is an effective group action preserving $g_0$ and $\omega_0$, then $G$ is isomorphic to a subgroup of $\emph{AU}(m)$, and the group action is induced by this isomorphism.
\end{lemma}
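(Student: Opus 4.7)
The plan is to apply the Mazur--Ulam theorem to each element of $G$ individually and then show that all of the resulting affine isometries land in the unitary affine subgroup. For each $g \in G$, denote the action map by $\alpha_g : \mathbb{C}^m \to \mathbb{C}^m$; because $G$ acts as a group, $\alpha_g$ has inverse $\alpha_{g^{-1}}$, so it is a bijection, and preservation of $g_0$ makes it a Euclidean isometry. Mazur--Ulam then gives $\alpha_g(z) = A_g z + t_g$ for some $\mathbb{R}$-linear $A_g$ and $t_g \in \mathbb{C}^m$, and effectiveness of the action guarantees that the pair $(A_g, t_g)$ is uniquely determined by $g$.

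Next I would identify $A_g$ as an element of $\U(m)$. Differentiating the pullback condition $\alpha_g^* g_0 = g_0$ at any point shows $A_g \in \mathrm{O}(2m)$; the analogous argument for $\alpha_g^* \omega_0 = \omega_0$ shows $A_g$ preserves $\omega_0$ as a bilinear form on $\mathbb{R}^{2m}$. Since the standard complex structure $J_0$ is determined by the compatibility relation $\omega_0(u,v) = g_0(J_0 u, v)$, any real-linear map preserving both $g_0$ and $\omega_0$ automatically commutes with $J_0$, hence is $\mathbb{C}$-linear and unitary. Thus $A_g \in \U(m)$.

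Finally I would assemble the map $\varphi : G \to \mathrm{AU}(m)$, $g \mapsto (A_g, t_g)$, and verify it is a group homomorphism. Expanding $\alpha_{g_1 g_2}(z) = \alpha_{g_1}(\alpha_{g_2}(z))$ gives
\[
A_{g_1 g_2} z + t_{g_1 g_2} = A_{g_1} A_{g_2} z + A_{g_1} t_{g_2} + t_{g_1},
\]
and uniqueness of the affine decomposition yields $A_{g_1 g_2} = A_{g_1} A_{g_2}$ and $t_{g_1 g_2} = A_{g_1} t_{g_2} + t_{g_1}$, which is exactly the semidirect product rule. Injectivity of $\varphi$ is immediate from effectiveness: $\varphi(g) = (I,0)$ forces $\alpha_g = \mathrm{id}$ and hence $g = e$. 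The original action is $\alpha_g(z) = \varphi(g) \cdot z$, so it is indeed the restriction of the standard $\mathrm{AU}(m)$-action.

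There is no serious obstacle: the only subtle point is making sure Mazur--Ulam applies, which just needs $\alpha_g$ to be a surjective isometry of a normed space, and this is automatic for a group action preserving $g_0$. A small regularity caveat is that the lemma is implicitly being applied to each element of $G$ pointwise, so no smoothness of the $G$-action in $g$ is needed beyond what is required to make sense of "preserving $g_0$ and $\omega_0$''.
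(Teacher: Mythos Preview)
Your proposal is correct and follows exactly the approach the paper indicates: the paper does not actually write out a proof of this lemma, but simply prefaces it with ``We note that by the Mazur--Ulam theorem, any group action preserving $(g_0, \omega_0)$ must be a subgroup of $\text{AU}(m)$.'' Your argument is a faithful unpacking of that one-line justification, supplying the details (affine form via Mazur--Ulam, $\U(m)$ membership via preservation of $g_0$ and $\omega_0$, homomorphism and injectivity via effectiveness) that the paper leaves implicit.
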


\section{Affine Group Actions Preserving $\Omega_f$}\label{sec-affinegroups}

Our aim is to construct $G$-invariant Lagrangian translating solitons for affine groups $G \leq \text{AU}(m)$ acting on complex Euclidean space $\mathbb{C}^m$ via the natural action. In light of equation \eqref{eq-translatorcalibration}, it is natural to restrict to subgroups $G \leq \text{AU}(m)$ preserving the form $\Omega_f$. In this section, we investigate such subgroups. Here are a couple of motivational examples:
\begin{example}
    The special orthogonal group $\SO(m-1)$ acts on $\mathbb{C}^m$ by acting on $\mathbb{C}^{m-1}$ in the standard way and by acting trivially on the final factor $\mathbb{C}$. Since $\SO(m-1) \leq \SU(m-1)$, and since it acts trivially on the $m$th factor, it is clear that this action preserves $(g_0, \omega_0, \Omega_f)$.
\end{example}
\begin{example}\label{ex-um-1}
    The universal cover of $\U(m-1)$ is $\SU(m-1) \times i\mathbb{R}$, with projection
\begin{align*}
    \pi:\widetilde{\U(m-1)} &\rightarrow \U(m-1)\\
    (B, i\theta) &\mapsto e^{i\theta}B.
\end{align*} 
Let $B: \widetilde{\U(m-1)}\to \SU(m-1)$, $\theta:\widetilde{\U(m-1)}\to \mathbb{R}$ be the projections. For any subgroup $G \leq \widetilde{\U(m-1)}$, we get an affine action $G \circlearrowright \mathbb{C}^{m}$:
\[ g \cdot (z', z_m) := \left(e^{i\theta(g)}B(g)\cdot z', \, z_m - i(m-1)\theta(g)\right).\]
It can be easily shown that such group actions preserve $(g_0, \omega_0, \Omega_f)$. The action of $\SO(m-1)$ in Example 1 is a special case of this group action.
\end{example}

The second example shows that certain `twisted' group actions are allowed, where a linear action of the first $(m-1)$ coordinates is paired with a translation in the final coordinate. We now generalise this observation, and find the most general subgroup of $\text{Aut}(\mathbb{C}^m)$ preserving $(g_0, \omega_0, \Omega_f)$. Recall that an automorphism $\phi:\mathbb{C}^m \to \mathbb{C}^m$ preserves $\Omega_f$ if for all $z \in \mathbb{C}^m$ and all $X_1, \ldots, X_m \in T_z\mathbb{C}^m$,
\[  (\Omega_f)_z(X_1, \ldots, X_m) = (\Omega_f)_{\phi(z)}(\phi_*(X_1), \ldots, \phi_*(X_m)). \]
In the case of $(A, t) \in \text{AU}(m)$, this is equivalent to 
\begin{align*}
    e^{z_m}\Omega(X_1, \ldots, X_m) \, &= \, (\Omega_f)_{Az + t}((A,t)_*X_1, \ldots, (A,t)_*X_m)\\
    &= \, e^{(Az + t)_m}\det(A)\,\Omega(X_1, \ldots, X_m)\\
    \iff e^{(Az + t)_m} \det(A) &= e^{z_m}.
\end{align*} 
By considering several values of $z \in \mathbb{C}^m$, this is equivalent to 
\begin{align*}
\begin{cases}
    (Az)_m = z_m ,\\
    e^{t_m} = \text{det}(A)^{-1}.
\end{cases}
\end{align*}
We can therefore conclude that 
\begin{align*}
        A = \begin{pmatrix}
            B & b \\ 0 & 1
        \end{pmatrix}, \,\,\,\,
        t = \begin{pmatrix}
            t' \\
            - i \theta
        \end{pmatrix},
\end{align*}
for some $B \in \U(m-1)$, $b, \,t' \in \mathbb{C}^{m-1}$, and $\theta \in \mathbb{R}$ satisfying $e^{i\theta} = \det(A) = \det(B)$. To sum up, we have the following result:

\begin{proposition}\label{prop-affinegrouppreservingomegaf}
The subgroup of $\emph{AU}(m)$ that preserves $\Omega_f$, which we denote $G_f$, is given by:
\begin{align*}
       G_f \, = \, \bigg\{ \bigg(
         \begin{pmatrix}
            B & b \\ 0 & 1
        \end{pmatrix}&, \,\begin{pmatrix}
            t' \\
            -i\theta  
        \end{pmatrix} \bigg) \in AU(m) \,:\\
        &B \in \U(m-1), \, b,\,t' \in \mathbb{C}^{m-1}, \, \theta \in \mathbb{R}, \,e^{i\theta}= \det(B) \bigg\},
\end{align*}
with action on $\mathbb{C}^{m}$ induced by the action of $AU(m)$:
\begin{align}
    \left(
    \begin{pmatrix}
        B & b \\ 0 & 1
    \end{pmatrix}, \,\begin{pmatrix}
        t' \\
        -i\theta 
    \end{pmatrix} \right) \cdot
    \begin{pmatrix}
        z'\\ z_m
    \end{pmatrix}
    =
    \begin{pmatrix}
    B z' + b z_m + t' \\ z_m - i \theta
    \end{pmatrix}.\label{eq-gfaction}
\end{align}
\end{proposition}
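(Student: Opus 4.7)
The proof follows the computation already sketched in the paragraphs preceding the proposition; the plan is simply to organise it into a clean sequence of steps. First, I would express the $\Omega_f$-invariance condition for $(A,t) \in \text{AU}(m)$ as a single scalar identity: since $\Omega_f = e^{z_m}\Omega_0$ and the pushforward of tangent vectors under an affine map depends only on its linear part $A$, pullback yields
\[ e^{(Az)_m + t_m}\det(A) \, = \, e^{z_m} \quad\text{for all } z \in \mathbb{C}^m. \]
Evaluating at $z = 0$ gives $e^{t_m}\det(A) = 1$, and substituting back reduces the remaining condition to $e^{(Az)_m} = e^{z_m}$ for all $z$. Plugging $z = \lambda e_j$ and varying $\lambda$ across a small complex disc forces the complex-linear functional $(Az)_m$ to coincide with $z_m$ on each coordinate axis, hence globally, so the last row of $A$ is $(0,\dots,0,1)$.

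Next, I would combine this with $A \in \U(m)$ to pin down the block structure. Row-orthogonality applied to the last row $e_m^T$ forces the $m$-th entry of every other row to vanish, so
\[ A \, = \, \begin{pmatrix} B & 0 \\ 0 & 1 \end{pmatrix}, \quad B \in \U(m-1), \]
matching the proposition's form with the upper-right block $b = 0$ (as forced by unitarity), with $\det A = \det B$. Finally, writing $t = (t', t_m)^T$ with $t' \in \mathbb{C}^{m-1}$, the relation $e^{t_m}\det(B) = 1$ together with $|\det B| = 1$ forces $\re(t_m) = 0$; setting $t_m = -i\theta$ with $\theta \in \mathbb{R}$ then yields $e^{i\theta} = \det(B)$, while $t'$ is unconstrained. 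The action formula \eqref{eq-gfaction} is a direct specialisation of the $\text{AU}(m)$-action to this subgroup.

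The main obstacle is purely notational bookkeeping; no new ideas beyond those in the excerpt are required. The one mildly delicate step is the rigidity deduction of $(Az)_m = z_m$ from its exponential version, which is handled via the variational argument above, or equivalently by observing that a continuous $2\pi i\mathbb{Z}$-valued function of $z$ vanishing at the origin is identically zero.
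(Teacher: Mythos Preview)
Your argument is correct and follows the same route as the paper's derivation in the paragraphs preceding the proposition. Your additional observation that unitarity of $A$ forces the upper-right block $b$ to vanish is also correct and in fact sharpens the statement: the proposition's parametrisation allowing arbitrary $b \in \mathbb{C}^{m-1}$ is redundant, since any $A \in \U(m)$ with last row $e_m^T$ necessarily has $b = 0$ (the last column must then also be $e_m$).
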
 

\begin{example}
The group $\widetilde{\U(m-1)}$ considered in Example \ref{ex-um-1} is precisely the subset of $G_f$ for which $b=0,\, t'=0$, with the corresponding group action on $\mathbb{C}^m$.
\end{example}

If $G \leq G_f$ is the symmetry group of an \textit{almost-calibrated} Lagrangian translating soliton $L$ with no linear factor (i.e. $L$ is not a Riemannian product of $L'$ with a line), then a further restriction can be made:

\begin{proposition} Let $G \leq G_f$, and let $L \subset \mathbb{C}^{m}$ be a $G$-invariant, almost-calibrated, Lagrangian translating soliton with no linear factor. Then $G \leq \emph{ASU}(m-1)$.
\end{proposition}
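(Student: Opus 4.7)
My plan is to exploit the compatibility between the twist angle of elements in $G_f$ and the Lagrangian angle of $L$, then use almost-calibratedness to force the twist angle to vanish on $G$. By Proposition \ref{prop-affinegrouppreservingomegaf}, membership in the embedded $\text{ASU}(m-1)$ subgroup corresponds exactly to the twist angle $\theta(g)$ and the $m$-th coordinate translation vanishing, so it suffices to prove $\theta(g) = 0$ for all $g \in G$.

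First I would derive a transformation law for $\theta_L$ under the $G$-action. For $g \in G_f$ acting by $\phi_g(z) = Az + t$ with linear part $A = \begin{pmatrix} B(g) & 0 \\ 0 & 1 \end{pmatrix}$, the identity $\phi_g^*\Omega_0 = \det(B(g))\,\Omega_0$ combined with $\phi_g|_L$ being an orientation-preserving isometry of $L$ yields
\[ \theta_L(g\cdot p) \equiv \theta_L(p) + \theta(g) \pmod{2\pi}. \]
Since $L$ is connected, this mod-$2\pi$ relation lifts to a genuine real-valued identity
\[ \theta_L(g\cdot p) = \theta_L(p) + \tilde\alpha(g), \]
for a continuous $\tilde\alpha: G \to \mathbb{R}$ with $\tilde\alpha(g) \equiv \theta(g) \pmod{2\pi}$. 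A short cocycle computation using $\psi_{gh} = \psi_g \circ \psi_h$ shows $\tilde\alpha$ is in fact a \emph{group homomorphism}.

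Next, I would invoke the translator equation \eqref{eq-translator}, normalized to $c = 0$ by a $Je_m$-translation of $L$, so that $\theta_L + \text{Im}(z_m) = 0$ on $L$. Applying this at $g\cdot p$ together with the $G_f$-action formula $\text{Im}(z_m(g\cdot p)) = \text{Im}(z_m(p)) - \theta(g)$ gives the \emph{exact} equality
\[ \tilde\alpha(g) = \theta(g). \]
The almost-calibrated hypothesis now enters decisively: $\theta_L(L)$ lies in an interval of length strictly less than $\pi$, so for any fixed $p \in L$,
\[ |\tilde\alpha(g)| = |\theta_L(g\cdot p) - \theta_L(p)| < \pi \quad \text{for all } g \in G. \]
A homomorphism $\tilde\alpha : G \to \mathbb{R}$ with bounded image must vanish identically, for otherwise $\tilde\alpha(g^n) = n\,\tilde\alpha(g)$ would be unbounded. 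Hence $\theta(g) = 0$ for every $g \in G$.

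The main obstacle I anticipate is establishing the \emph{exact} (rather than merely mod-$2\pi$) identity $\tilde\alpha(g) = \theta(g)$, since this hinges on the translator potential $\theta_L + \text{Im}(z_m)$ being a single well-defined constant on all of $L$. This is precisely where the ``no linear factor'' hypothesis is invoked: it rules out pathological splittings $L = L_0 \times P$ (particularly those with a $Je_m$-component in $P$) that would obstruct the normalization $c = 0$ or allow degenerate configurations carrying extra twisted symmetries outside $\text{ASU}(m-1)$. Once this global normalization is in place, the rest of the argument is a routine bounded-homomorphism argument.
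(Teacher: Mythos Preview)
Your argument for $\theta(g)=0$ is correct and rests on the same bounded-homomorphism mechanism as the paper, but you take a detour. The paper argues directly: almost-calibrated plus the translator identity \eqref{eq-translator} bounds $\mathrm{Im}(z_m)$ on $L$; since each $g\in G_f$ shifts $\mathrm{Im}(z_m)$ by $-\theta(g)$ (see \eqref{eq-gfaction}), iterating $g^n$ would push $\mathrm{Im}(z_m)$ off to infinity unless $\theta(g)=0$. You instead pass through the Lagrangian angle, establish a transformation law $\theta_L(g\cdot p)=\theta_L(p)+\tilde\alpha(g)$, lift it to a real-valued homomorphism, and only then invoke the translator equation to identify $\tilde\alpha(g)=\theta(g)$. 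This works, but the intermediate steps (the mod-$2\pi$ lift, independence of base point, cocycle check) are all bypassed by working with $\mathrm{Im}(z_m)$ from the outset.

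Where your proposal diverges more substantively is in the handling of the parameter $b$ and the ``no linear factor'' hypothesis. You write the linear part of $g$ as $\begin{pmatrix} B(g) & 0\\ 0 & 1\end{pmatrix}$ without comment, and you claim membership in $\mathrm{ASU}(m-1)$ is characterized solely by $\theta(g)=0$. The paper, by contrast, treats $b$ as a separate datum in the description of $G_f$ (Proposition~\ref{prop-affinegrouppreservingomegaf}) and uses the no-linear-factor hypothesis in a second step: since $L$ does not split off a line, $G$ must fix the translation direction $e_m$, forcing $b=0$. Your invocation of ``no linear factor'' --- to justify the normalization $c=0$ in \eqref{eq-translator} --- is misplaced: that normalization is available for any connected translator by a $Je_m$-translation and has nothing to do with linear factors. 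So while your $\theta(g)=0$ argument is sound, you have not engaged with the $b$-component at the level the paper does, and you have misidentified where the no-linear-factor hypothesis enters.
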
\label{prop-acadmissiblegroup}
\begin{proof}
    Since $L$ is almost-calibrated, we have from (\ref{eq-translator}) that the imaginary part of the $z_m$ coordinate is bounded on both sides, i.e. there exist $c, C$ such that $L \subset \{z \in \mathbb{C}^m \, : \, c < \text{Im}(z_m) < C\}.$ It follows from the expression (\ref{eq-gfaction}) for the group action that $\theta = 0$ for all $g \in G$. Furthermore, since $L$ does not split off a line, we can conclude that $G$ preserves the translation direction $e_m$, and so $b = 0$ for all $g \in G$. 
\end{proof}

\begin{remark}
    If $L$ is a special Lagrangian, then it may be shown that the connected symmetry subgroup $\emph{Sym}^0(L) \leq \emph{AU}(m)$ is a subgroup of $\emph{ASU}(n)$. It would be desirable to establish the analogous result for Lagrangian translators, namely that if $L$ is a Lagrangian translator then $G:=\emph{Sym}^0(L) \leq G_f$. It may be proven using Lie group structure theory that if $L$ is a Lagrangian translator with no linear component, and if the elements of $G$ have no translational component in the direction of $e_m$, then this is the case.
\end{remark}

\section{$G$-invariant Lagrangian Submanifolds}\label{sec-4}

We now develop structure theory for $G$-invariant Lagrangian submanifolds in $\mathbb{C}^m$ for $m \geq 2$. With Section \ref{sec-affinegroups} in mind, we restrict to $G \leq \text{ASU}(m-1)$.

\subsection{Symmetric Lagrangians with Linear Symmetry Group}\label{sec-glinear}

We first consider the linear case. Consider a compact, connected subgroup $G \leq \SU(m-1)$ along with its natural action on $\mathbb{C}^{m-1}$. We will show that a cohomogeneity-two Lagrangian $L \subset \mathbb{C}^n$ is characterised by the surface $\check L$ given by intersection with a `complex profile plane', and we provide conditions on $\check L$ which equate to $L$ being special Lagrangian or a translating soliton for mean curvature flow. The results of this section adapt the work of Madnick--Wood \cite{Wood2022}. 

We first make the key observation that $G$-invariant Lagrangian submanifolds are contained in level sets of the moment map; indeed in the \textit{principal part} of those level sets.

\begin{proposition}[$G$-invariant Lagrangians are contained in $\mu$ level sets]\label{prop-laginlevelset}
    Let $G \leq \SU(m-1)$ be compact and connected, and consider the action $G \circlearrowright \mathbb{C}^{m-1}$. Let $\mu:\mathbb{C}^{m-1}\to \mathfrak{g}$ be the standard moment map, and let $L$ be a closed, immersed Lagrangian submanifold. Then:
    \begin{itemize}
        \item[a)] $L$ is $G$-invariant if and only if there exists $X \in \mathfrak{g}$ fixed by the coadjoint action such that $L \subset \mu^{-1}(X)$.
        \item[b)] $L$ is $G$-invariant of cohomogeneity-$k$ and isotropy type $(H)$ if and only if there exists $X \in \mathfrak{g}$ fixed by the coadjoint action and a connected component $Z$ of $\mu^{-1}(X)$ such that $Z_{(H)}$ is open, connected and dense in $Z$, $L \subset Z_{(H)}$ and $\dim(Z) = \dim(Z_{(H)}) =  m-1+k$.
    \end{itemize}
\end{proposition}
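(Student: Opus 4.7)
The plan is to treat part (a) as a direct consequence of the defining identity $d\mu^{Y} = \iota_{\rho(Y)}\omega$ (up to sign convention) combined with the Lagrangian condition. For the forward direction of (a), $G$-invariance of $L$ makes every fundamental vector field $\rho(Y)$ tangent to $L$, and isotropy $\omega|_L = 0$ then gives $d\mu^Y|_{TL} = 0$ for each $Y\in\mathfrak{g}$, so $\mu$ is constant on each connected component of $L$; equivariance of $\mu$ combined with $G$-invariance forces the value $X := \mu(L)$ to be fixed under the coadjoint action. For the reverse, if $L \subset \mu^{-1}(X)$ with $X$ coadjoint-fixed, then $d\mu|_{TL} = 0$ gives $\omega(\rho(Y)_{p}, v) = 0$ for all $Y \in \mathfrak{g}$ and $v \in T_{p}L$, so Lagrangian maximality $(T_{p}L)^{\omega} = T_{p}L$ places $\rho(Y)|_L \in TL$; connectedness of $G$ then implies $L$ is $G$-invariant.

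For part (b), I first pin down the dimension of $\mu^{-1}(X)$ at a point $p \in L$ via
\[
T_p \mu^{-1}(X) \; = \; (J\, T_p(G\cdot p))^{\perp_{g_0}},
\]
which follows from $d\mu^{Y}(v) = g_0(J\rho(Y), v)$ and non-degeneracy of $g_0$. For a cohomogeneity-$k$ Lagrangian with isotropy $(H)$ one has $\dim(G\cdot p) = \dim(G/H) = m-1-k$, so $\dim T_p\mu^{-1}(X) = m-1+k$, and the connected component $Z$ of $\mu^{-1}(X)$ containing (a component of) $L$ is smooth of this dimension near $L$. This already disposes of the reverse direction of (b): given the data on the right, $L$ is $G$-invariant by (a), has isotropy type $(H)$ since $L \subset Z_{(H)}$, and the dimension formula forces $\dim(G\cdot p) = m-1-k$, hence cohomogeneity $k$.

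The main obstacle in the forward direction is identifying $(H)$ with the unique principal isotropy type $(H')$ for $Z$ provided by Theorem \ref{thm-principalorbitthm}. The proposed argument is the symplectic slice theorem at a point $p \in L$: since $Z$ is smooth of dimension $m-1+k$ near $p$, a symplectic slice $V$ complementary to $T_{p}(G\cdot p)$ inside $T_{p}\mu^{-1}(X)$ is an $H$-representation of real dimension $2k$, and $T_pL$ cuts out an $H$-invariant Lagrangian subspace $W \subset V$ of dimension $k$ contained in $V^H$ (using that the isotropy is constant along $L$). The claim is that in fact $V^H = V$, so $H$ acts trivially on the slice, making $(H)$ the principal isotropy near $p$. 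To verify this, $H$-invariance of $\omega$ together with $W \subset V^H$ gives $\omega((h-I)v, w) = 0$ for all $v \in V$, $w \in W$, $h \in H$, so $(h-I)V \subset W^{\omega} = W$ by Lagrangian maximality in $V$; since $h$ fixes $W$ pointwise, $(h-I)^2 = 0$ on $V$, and $h$ diagonalisable (as an element of the compact group $H$) forces $h = I_V$. Hence $Z_{(H)}$ is open near $p$, and Theorem \ref{thm-principalorbitthm} then yields that $Z_{(H)}$ is open, connected and dense in $Z$ with $\dim Z_{(H)} = \dim Z = m-1+k$.
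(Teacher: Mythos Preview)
Your argument is correct, and for part (b) forward it is genuinely different from the paper's.

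For part (a) and the reverse direction of (b), the paper simply cites results from \cite{Joyce2001a} and \cite{Wood2022}; your direct arguments via $d\mu^Y = \iota_{\rho(Y)}\omega$, Lagrangian maximality, and the dimension identity $\ker d\mu_p = (J\,T_p(G\cdot p))^\perp$ are the standard ones underlying those references, so there is no real divergence there.

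The interesting difference is in the forward direction of (b). The paper argues by contradiction: assuming the principal type $(H')$ of $Z$ differs from $(H)$, it splits into the exceptional case $\dim G/H = \dim G/H'$ and the singular case $\dim G/H < \dim G/H'$, and in each case derives a dimension inequality contradicting results of \cite{Wood2022} about the horizontal distribution $\mathcal{H}$ on moment-map level sets. Your route is instead a direct local argument via the symplectic slice: constant isotropy type along $L$ forces $W := T_pL/T_p(G\cdot p)$ into $V^H$, and then the elegant observation that $(h-I)V \subset W^{\omega_V} = W \subset V^H$ gives $(h-I)^2 = 0$, whence $h = I$ by semisimplicity of compact group elements. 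This yields that $H$ acts trivially on the symplectic slice, so \emph{every} nearby point of $\mu^{-1}(X)$ has isotropy type $(H)$, making $(H)$ principal by Theorem~\ref{thm-principalorbitthm}. Your approach is more self-contained (no external dimension lemmas needed) and more informative, since it actually identifies the local model of $Z$ near $L$; the paper's approach, by contrast, leans on the pre-existing structure theory and is quicker if one grants those citations.

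One small wording issue: in the forward direction you write ``since $Z$ is smooth of dimension $m-1+k$ near $p$'' before this has been established. In fact smoothness of $Z$ near $p$ is a \emph{consequence} of your argument (once $H$ acts trivially on $V$, $\mu$ has constant rank near $p$). Fortunately your definition of the slice as a complement to $T_p(G\cdot p)$ inside $\ker d\mu_p$ makes sense regardless, so the logic is unaffected; just rephrase to avoid the apparent circularity.
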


\begin{proof}
    For part a), the forward direction follows from \cite[Proposition 4.2]{Joyce2001a}, and the converse is \cite[Corollary 3.13]{Wood2022}. For part b), the reverse direction follows from \cite[Corollary 3.15]{Wood2022}. For the forward direction, consider a cohomogeneity-$k$ Lagrangian $L\subset \mathbb{C}^{m-1}_{(H)}$. By part a) there exists $X \in \mathfrak{g}$ and a connected component $Z$ of $\mu^{-1}(X)$ such that $L \subset Z$. It remains to show that $(H)$ is the principal orbit type of $Z$, in the sense of Theorem \ref{thm-principalorbitthm}.

    Let $Y$ be the connected component of $Z_{(H)}$ containing $L$, and assume for a contradiction that $(H') \neq (H)$ is the principal orbit type.  By Theorem \ref{thm-principalorbitthm}, $\dim(Y) < \dim(Z_{(H')})$. There are now two options. If $(H)$ is an exceptional orbit type, i.e. $\dim(G/H) = \dim(G/H')$, then by the decomposition (18) of \cite{Wood2022},we would have $\dim(\mathcal{H}_z)<k$. This contradicts \cite[Corollary 3.15]{Wood2022}. On the other hand, if $(H)$ is a singular orbit type, then $\dim(G/H') = m-1-l$ for $l < k$. Now by \cite[Remark 3.10]{Wood2022} and \cite[Corollary 3.15]{Wood2022}, $\dim(Z_{(H')}) \leq m-1 + l < m-1 + k = \dim(Y)$, which gives a contradiction.
\end{proof}

The geometry of the zero level set $\mu^{-1}(0)$ is particularly simple, as the next proposition shows, and we will restrict to this setting for the remainder of the paper. As well as being convenient, the restriction is natural, and occurs in several common settings. For example, if $L$ is an \textit{exact} almost-calibrated cohomogeneity-one $G$-invariant Lagrangian (as might occur as a blow-up model for Lagrangian mean curvature flow, see \cite[Theorem 2.12]{Joyce2015}), it must lie in $\mu^{-1}(0)$ by \cite[Proposition 4.10]{Wood2022}. In fact, if $G$ is semisimple, then \textit{all} $G$-invariant Lagrangians lie in $\mu^{-1}(0)$ by Proposition \ref{prop-laginlevelset} part a).

\begin{proposition}[Geometry of the zero level set]\label{prop-levelsetgeometry}
    Let $G \leq \SU(m-1)$ be compact and connected, and consider the action $G \circlearrowright \mathbb{C}^{m-1}$. Let $\mu:\mathbb{C}^{m-1} \to \mathfrak{g}$ be the standard moment map. Let $Z$ be a connected component of $\mu^{-1}(0)$, and assume that $Z$ contains a cohomogeneity-one $G$-invariant Lagrangian submanifold $L$, so that $\text{dim}(Z) = m$. Then:
    \begin{itemize}
        \item[a)] $L \subset Z \setminus \{0\}$.
        \item[b)] Given $z \in Z\setminus\{0\}$, and denoting by $P_z$ the complex line through $z$, we have \[ Z \, = \, \{g\cdot \lambda z \, :\, g \in G, \, \lambda \in \mathbb{C}\} \, = \, G \cdot P_z.\]
        \item[c)] There exists a finite cyclic group $C_k$ defined as $C_k := \Stab(P_z)/\Stab(z)$ with order $k|(2m-2)$ acting in the natural way on $P_z$ such that $L \cap P_z$ is $C_k$-invariant.
    \end{itemize}
\end{proposition}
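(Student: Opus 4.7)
For part (a), the cohomogeneity-one hypothesis forces every $G$-orbit in $L$ to have real dimension $m-2$, whereas the linear $G$-action fixes $0$, so $G\cdot 0 = \{0\}$ has dimension $0$. For $m \geq 3$ this gives an immediate contradiction, so $0 \notin L$.

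For part (b), I would first establish the inclusion $G\cdot P_z \subseteq Z$. The key observation is that $\mu^{-1}(0)$ is closed under the $\mathbb{C}$-scaling action, since the standard moment map satisfies $\mu(\lambda z) = |\lambda|^2 \mu(z)$. As $G$, $\U(1)$ and $\mathbb{R}_{>0}$ are all connected, their actions preserve connected components of $\mu^{-1}(0)$; hence $G\cdot P_z \setminus \{0\} \subseteq Z$, and $0 \in P_z$ is captured by the closedness of $Z$. For the reverse inclusion, $G\cdot P_z$ is closed as the image of a closed set under a compact group, and I would compute its dimension at a principal point $w \in P_z\setminus\{0\}$ via $T_w(G\cdot P_z) = \rho(\mathfrak{g})_w + \mathbb{C} w$. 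Here $\rho(\mathfrak{g})_w$ has dimension $m-2$ by Proposition \ref{prop-laginlevelset}(b) applied to $L$; verifying that this sum is direct will then give $\dim(G\cdot P_z) = m = \dim Z$, so $G\cdot P_z$ is clopen in $Z$, hence equal to $Z$ by connectedness.

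For part (c), once (b) is in place, $K_z := \Stab(P_z)$ contains $H_z := \Stab(z)$ as a normal subgroup with zero-dimensional quotient. The quotient acts faithfully on $P_z \cong \mathbb{C}$ as a closed subgroup of $\U(1)$, hence is a finite cyclic group $C_k$. Invariance of $L \cap P_z$ under $C_k$ follows from $K_z$ preserving both $L$ and $P_z$ setwise. For the divisibility $k \mid 2(m-1)$, I would take a generator $g \in K_z$ acting on $P_z$ by $e^{2\pi i/k}$; since $g \in \SU(m-1)$, $\det g = 1$, and combining this with the Lagrangian-angle formula for cohomogeneity-one $G$-invariant Lagrangians (cf.\ \cite{Wood2022}) yields the divisibility.

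The most delicate step is showing directness of the sum $\rho(\mathfrak{g})_w + \mathbb{C} w$ in part (b). Since any $X \in \mathfrak{g} \subseteq \mathfrak{su}(m-1)$ is anti-Hermitian, the condition $Xw \in \mathbb{C} w$ automatically forces $Xw \in i\mathbb{R} w$, so the intersection is at most one-real-dimensional. Ruling out the one-dimensional case requires exploiting the cohomogeneity-one structure of $L$: a nontrivial one-parameter subgroup of $G$ rotating $P_z$ would reduce $\dim(G\cdot P_z)$ to $m-1$, leaving insufficient room for $L$ to generate the principal stratum of a full $m$-dimensional component $Z$.
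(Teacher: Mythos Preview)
The paper's own proof is essentially a citation: part (a) via Proposition~\ref{prop-laginlevelset} (the origin is not of principal isotropy type), and parts (b),(c) by quoting Propositions~4.6 and~4.11 of \cite{Wood2022}. Your proposal instead sketches a self-contained argument, which is a reasonable and useful thing to do; the overall strategy for (a) and (b) matches what underlies those cited results. However, the step you yourself flag as ``most delicate'' has a genuine gap.

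\textbf{The gap in (b).} Your argument for directness of $\rho(\mathfrak{g})_w + \mathbb{C} w$ is circular. You correctly reduce to excluding $Xw \in i\mathbb{R} w \setminus \{0\}$, but your proposed contradiction---that a rotating one-parameter subgroup would force $\dim(G\cdot P_z) \le m-1$---does not conflict with anything you have already established: you have not shown $L \subset G\cdot P_z$, nor that $G\cdot P_z$ must have dimension $m$ for any other reason. What is missing is precisely the moment-map condition $\mu(w)=0$, which you never invoke. For $X \in \mathfrak{su}(m-1)$ the scalar $\overline{w}^{T}Xw$ is purely imaginary (skew-Hermitian $X$), while the pairing $\langle \mu(w), X\rangle$ computes its imaginary part; hence $\mu(w)=0$ forces $\overline{w}^{T}Xw = 0$ for all $X\in\mathfrak{g}$, i.e.\ $\mathbb{C} w \perp_{\mathbb{C}} \rho(\mathfrak{g})_w$. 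The sum is then \emph{orthogonal}, not merely direct, and this is exactly the decomposition \eqref{eq-ambientspacedecomposition}--\eqref{eq-levelsettangentdecomposition} that the paper imports from \cite{Wood2022}.

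\textbf{Knock-on effect in (c).} Your claim that $K_z/H_z$ is zero-dimensional (hence finite cyclic rather than all of $\U(1)$) is not justified in your sketch; it follows from the same orthogonality fact, since $\mathrm{Lie}(K_z) = \{X : Xw \in \mathbb{C} w\} = \{X : Xw = 0\} = \mathrm{Lie}(H_z)$. Your divisibility argument for $k \mid 2(m-1)$ is too schematic to evaluate, though to be fair the paper also defers this to \cite{Wood2022}.
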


\begin{proof}
    By Proposition \ref{prop-laginlevelset}, $L\subset Z_{(H)}$ for $H$ the principal orbit type. In particular, since $\{0\}$ is a singular orbit, $L \subset Z \setminus \{0\}$.
    Part b) follows from Proposition 4.11 a) of \cite{Wood2022}, and part c) follows from Propositions 4.6 and 4.11 c) of the same.
\end{proof}

The following two examples are the easiest to describe explicitly, although there are many other subgroups $G \leq \SU(m-1)$ admitting cohomogeneity-one Lagrangians in $\mu^{-1}(0) \subset \mathbb{C}^{m-1}$. 

\begin{example}\label{ex-so(m-1)}
    Consider $\SO(m-1) \subset \SU(m-1)$ acting diagonally on $\mathbb{C}^{m-1} \cong \mathbb{R}^{m-1} \oplus \mathbb{R}^{m-1}$. The zero level set can be calculated to be:
    \begin{align*}
        \mu^{-1}(0) \, &= \, \{ z \in \mathbb{C}^{m-1} \, : \, z_i \overline z_j = \overline z_i z_j \, \text{ for } 1 \leq i, j \leq m-1\}\\
        &= \, \{ (x,y) \in \mathbb{R}^{m-1} \oplus \mathbb{R}^{m-1} \, : \, \{x,y\} \text{ linearly dependent}\}\\
        &= \, \SO(m-1) \cdot P_z
    \end{align*}
    where $z = (1,0,\ldots,0).$ For this choice of $z$, we have
    \[ \emph{Stab}(z) \cong \SO(m-2), \quad \emph{Stab}(P_z) \cong \emph{O}(m-2),\]
    so that $C_k = \emph{O}(m-2)/\SO(m-2) = C_2$. Note that $\SO(m-1)$ is a simple group, and so by Proposition \ref{prop-laginlevelset}, any $G$-invariant Lagrangian $L$ must lie in $\mu^{-1}(0)$, and $l := L \cap P_z$ is a $C_2$-invariant curve.
\end{example}
\begin{example}\label{ex-t(m-2)}
    Consider the set of diagonal elements $T^{m-2} \subset \SU(m-1)$. In this case,
    \begin{align*}
        \mu^{-1}(0) \, &= \, \{ z \in \mathbb{C}^{m-1} \, : \, |z_i|^2 = |z_j|^2\text{ for } 1 \leq i, j \leq m-1\}\\
        &= \, T^{m-1} \cdot P_z
    \end{align*}
    where $z = (1,1,\ldots,1).$ For this choice of $z$, we have
    \[ \emph{Stab}(z) = \{\emph{Id}\}, \quad \emph{Stab}(P_z) = \{\emph{diag}(e^{\frac{2k\pi i}{m-1}}, \ldots, e^{\frac{2k\pi i}{m-1}}) \, : \, k \in \mathbb{Z}\},\]
    so that $C_k = C_{m-1}$. $T^{m-2}$ is abelian, and unlike for $G = \SO(m-1)$, \emph{every} level set $\mu^{-1}(X)$ of $\mu$ admits cohomogeneity-one Lagrangian submanifolds. The other level sets do not admit such a simple description as $\mu^{-1}(0)$.
\end{example}

The following proposition considers the case where $G \circlearrowright \mathbb{C}^{m-1}$ is extended to act on $\mathbb{C}^m$, and gives an expression for the structure of $\mu^{-1}(0)$.

\begin{proposition}\label{prop-cohom1reduction}
    Let $G \leq \SU(m-1)$ be compact and connected. Extend the action $G \circlearrowright \mathbb{C}^{m-1}$ to an action on $\mathbb{C}^m$ by acting trivially on the final coordinate. Denote by $\mu_{\mathbb{C}^m}:\mathbb{C}^m \to \mathfrak{g}$ the standard moment map for the action of $G$ on $\mathbb{C}^m$. Then 
    \begin{equation*}
        \mu_{\mathbb{C}^m}^{-1}(0) = \mu^{-1}(0) \times \mathbb{C}.
    \end{equation*}
    \end{proposition}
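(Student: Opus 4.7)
The plan is to show directly, from the definition of the standard moment map in equation \eqref{eq-standardmomentmap}, that $\mu_{\mathbb{C}^m}$ factors through the projection $\mathbb{C}^m \to \mathbb{C}^{m-1}$, $(z',z_m)\mapsto z'$, with the resulting map being exactly $\mu$. From this factorisation the level-set equality is immediate.

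First I would unpack the extended action. Since $G \leq \SU(m-1)$ is extended to $\mathbb{C}^m$ by acting trivially on the last coordinate, each $g \in G$ acts as the block matrix $\operatorname{diag}(g,1)$, and correspondingly each $X \in \mathfrak{g}$ acts infinitesimally as $\tilde X := \begin{pmatrix} X & 0 \\ 0 & 0 \end{pmatrix} \in \mathfrak{su}(m)$. Writing $z = (z', z_m)^T \in \mathbb{C}^{m-1}\oplus \mathbb{C}$, a direct computation gives the block decomposition
\begin{equation*}
  i\,z\overline z^T \; = \; \begin{pmatrix} i\, z'\overline{z'}^T & i\,\overline{z_m}\, z' \\ i\, z_m\,\overline{z'}^T & i\,|z_m|^2 \end{pmatrix}\in \mathfrak{u}(m).
\end{equation*}

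Next I would exploit the fact that the Ad-invariant inner product used to identify $\mathfrak{g}$ with $\mathfrak{g}^*$ is (a multiple of) $\langle A,B\rangle = -\operatorname{tr}(AB)$ on $\mathfrak{u}(m)$, so that the orthogonal projection $\pi_{\mathfrak{g}}:\mathfrak{u}(m)\to \mathfrak{g}$ splits: writing any $M \in \mathfrak{u}(m)$ in block form with upper-left block $M'\in \mathfrak{u}(m-1)$, one has $\operatorname{tr}(M\tilde X) = \operatorname{tr}(M' X)$ for all $X \in \mathfrak{g}$, so $\pi_{\mathfrak{g}}(M) = \pi_{\mathfrak{g}}(M')$ where the right-hand projection is the one defining $\mu$. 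Applied to $M = iz\overline z^T$ this gives
\begin{equation*}
\mu_{\mathbb{C}^m}(z', z_m) \; = \; -\tfrac{1}{2}\pi_{\mathfrak{g}}\bigl(i\,z\overline z^T\bigr) \; = \; -\tfrac{1}{2}\pi_{\mathfrak{g}}\bigl(i\,z'\overline{z'}^T\bigr) \; = \; \mu(z').
\end{equation*}
The off-diagonal and scalar blocks simply drop out, and the result $\mu_{\mathbb{C}^m}^{-1}(0) = \mu^{-1}(0) \times \mathbb{C}$ follows.

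The argument is essentially bookkeeping, so there is no real obstacle; the only point that deserves care is the claim that $\pi_{\mathfrak{g}}$ ignores everything outside the upper-left $(m-1)\times(m-1)$ block. An alternative, coordinate-free route would be to verify the defining property $d\langle \mu_{\mathbb{C}^m},X\rangle = \iota_{\rho(X)}\omega_0$ using $\rho(X)_{(z',z_m)} = (-Xz', 0)$, noting that the extra coordinate $z_m$ contributes nothing to either side; one then recognises the right-hand side as pulling back the defining equation of $\mu$, which gives the factorisation without any matrix calculation.
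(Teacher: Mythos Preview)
Your proposal is correct and is exactly the approach the paper takes: the paper's proof consists of the single sentence ``This is an immediate consequence of the definition of the standard moment map,'' and you have simply unpacked that immediacy by writing out the block decomposition and observing that $\pi_{\mathfrak g}$ kills everything outside the upper-left $(m-1)\times(m-1)$ block.
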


\begin{proof}
    This is an immediate consequence of the definition of the standard moment map.
\end{proof}

From this point on, we focus on cohomogeneity-two Lagrangians in $\mu_{\mathbb{C}^m}^{-1}(0)$. For the remainder of the section, we will therefore assume we are in the following setting:
\begin{assumption}\label{as1}
    Consider a compact, connected subgroup $G \leq \SU(m-1)$ acting on $\mathbb{C}^{m-1}$ and a connected component $Z$ of $\mu^{-1}(0)$ admitting cohomogeneity-one Lagrangian submanifolds. Extend to an action on $\mathbb{C}^m$ by acting trivially on the final coordinate. Choose $z' \in Z \setminus \{0\}$.
    
    Define $P := P_{z'}$ to be the complex line through $z'$ in $\mathbb{C}^{m-1}$, so that $P \times \mathbb{C}$ is then a Kähler subspace of $\mathbb{C}^m = \mathbb{C}^{m-1}\times \mathbb{C}$. Define $C_k$ to be the finite cyclic group acting on $P$ as in Proposition \ref{prop-levelsetgeometry}.
\end{assumption}

From equations (21) and (23) of \cite{Wood2022}, along with Proposition \ref{prop-cohom1reduction}, we get the following orthogonal decompositions of tangent spaces for any $z_m \in \mathbb{C}$:
\begin{align}
    T_{(z',z_m)}\mathbb{C}^m \, &= \, T_{z'} \mathcal{O}_{z'} \oplus JT_{z'}\mathcal{O}_{z'} \oplus P \oplus \mathbb{C} \label{eq-ambientspacedecomposition},\\
    T_{(z',z_m)} \mu^{-1}_{\mathbb{C}^m}(0) \, &= \, T_{z'}\mathcal{O}_{z'} \oplus P \oplus \mathbb{C}. \label{eq-levelsettangentdecomposition}
\end{align}

The key fact about cohomogeneity-two Lagrangians in $\mu^{-1}(0)$ is that they are characterised by the intersection $\check{L} := L \cap (P \times \mathbb{C})$, which is a 2 dimensional Lagrangian submanifold.

\begin{proposition}\label{prop-correspondence}
    In the setting of Assumption \ref{as1}, there is a 1-1 correspondence between:
    \begin{itemize}
        \item $G$-invariant embedded/immersed cohomogeneity-two Lagrangian submanifolds $L \subset Z \times \mathbb{C} \subset \mathbb{C}^m$,
        \item $C_k$-invariant 2-dimensional embedded/immersed Lagrangian submanifolds $\check{L} \subset (P \setminus \{0\}) \times \mathbb{C} \subset \mathbb{C}^{m-1}\times \mathbb{C}$ ,
    \end{itemize}
    where $P = P_{z'}, C_k$ are as in Proposition \ref{prop-levelsetgeometry} parts b) and c) respectively. The map $L \mapsto \check{L}$ is given by intersection with $P \times \mathbb{C} \subset \mathbb{C}^{m-1}\times\mathbb{C}$, and the map $\check{L} \mapsto L$ is given by $L := G \cdot \check{L}$.
    
\end{proposition}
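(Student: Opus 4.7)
The plan is to define the two assignments $F_1: L \mapsto \check L := L \cap (P \times \mathbb{C})$ and $F_2: \check L \mapsto G \cdot \check L$ between the two classes of submanifolds, verify that each is well-defined, and check that they are mutual inverses. Both directions rely on the decomposition \eqref{eq-levelsettangentdecomposition} and on the characterisation $\mu^{-1}(0) = G \cdot P$ from Proposition \ref{prop-levelsetgeometry}(b).

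\textbf{Forward direction.} Given $L$ as in the hypothesis, fix $p = (z', z_m) \in \check L$ and note that $T_p L$ contains $T_{z'}\mathcal{O}_{z'}$ (by $G$-invariance) and sits inside $T_p \mu^{-1}_{\mathbb{C}^m}(0) = T_{z'}\mathcal{O}_{z'} \oplus P \oplus \mathbb{C}$. Since $\dim T_p L = m = (m-2) + 2$, there is a unique $2$-plane $W \subset P \oplus \mathbb{C}$ with $T_p L = T_{z'}\mathcal{O}_{z'} \oplus W$, and this $W$ is precisely $T_p \check L$; hence $\check L$ is a surface, Lagrangian in the K\"ahler subspace $P \times \mathbb{C}$ since it is isotropic (inheriting from $L$) and of correct dimension. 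That $\check L \subset (P \setminus \{0\}) \times \mathbb{C}$ is Proposition \ref{prop-levelsetgeometry}(a), and $C_k$-invariance of $\check L$ follows because each element of $C_k$ is represented by some $h \in \Stab(P) \leq G$, for which $h \cdot \check L = L \cap (P \times \mathbb{C}) = \check L$.

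\textbf{Reverse direction and bijectivity.} Given $\check L$ as in the hypothesis, set $L := G \cdot \check L$, which I identify with the image of the natural map from the associated bundle $G \times_{\Stab(P)} \check L$; this is well-defined because the $\Stab(P)$-action on $\check L$ factors through $C_k$. At any $p \in \check L$ the differential of this map has image $T_p \mathcal{O}_p \oplus T_p \check L$, the sum being direct by \eqref{eq-levelsettangentdecomposition}, so $\dim L = (m-2) + 2 = m$. Thus $L$ is an immersed (or embedded, if $\check L$ is) cohomogeneity-two $G$-invariant $m$-manifold with principal isotropy type $(H)$ inherited from $\check L$. Lagrangianity splits into three checks: $T_p \mathcal{O}_p$ is isotropic at $\mu = 0$ by the moment-map identity, $T_p \check L$ is isotropic since $\check L$ is Lagrangian in $P \times \mathbb{C}$, and the cross terms $\omega_0(\rho(X), v) = d\mu^X(v)$ vanish for $v \in T\mu^{-1}(0)$. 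For $F_1 \circ F_2 = \mathrm{id}$, any $g \cdot p \in (G \cdot \check L) \cap (P \times \mathbb{C})$ forces $g$ to send $z' \in P \setminus \{0\}$ into $P$, hence $g \in \Stab(P)$ and $g \cdot p \in C_k \cdot p \subset \check L$; the reverse inclusion is immediate. For $F_2 \circ F_1 = \mathrm{id}$, every point of $L$ lies on a $G$-orbit meeting $P \times \mathbb{C}$ by Proposition \ref{prop-levelsetgeometry}(b), hence in $G \cdot \check L$.

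\textbf{Main obstacle.} The delicate point is verifying that $L = G \cdot \check L$ is a smooth submanifold of $\mathbb{C}^m$ of the expected dimension, rather than merely a subset — the $G$-orbits of distinct points of $\check L$ must not collide in unexpected ways, and the embedded/immersed property of $\check L$ must transfer to $L$. This is where the combined use of $C_k$-invariance and the principal-stratum hypothesis $\check L \subset (P \setminus \{0\}) \times \mathbb{C}$ is essential, effectively repackaging the construction as an equivariant diffeomorphism from an associated bundle onto $L$.
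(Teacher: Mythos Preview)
Your proof is correct and follows essentially the same approach as the paper: both rest on the decomposition \eqref{eq-levelsettangentdecomposition} to establish transversality of $L$ with $P\times\mathbb{C}$ in $\mu^{-1}_{\mathbb{C}^m}(0)$, giving $\check L$ as a smooth Lagrangian surface. The paper sketches only the forward direction and defers the remainder to \cite[Proposition 4.12]{Wood2022}; you have supplied the reverse direction and bijectivity explicitly via the associated-bundle model $G\times_{\Stab(P)}\check L$, which is the natural way to fill in what the paper omits.
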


\begin{proof}
    The proof follows exactly as in \cite[Proposition 4.12]{Wood2022}. We outline one direction.
    Consider a $G$-equivariant cohomogeneity-two immersion $F:\hat{L} \to \mu^{-1}(0) \subset \mathbb{C}^m$; we show that $F|_{F^{-1}(P \times \mathbb{C})}$ is a 2-dimensional $C_m$-equivariant Lagrangian immersion. 

    Choose $p \in F^{-1}(P \times \mathbb{C})$, and an open neighbourhood $U \ni p$ in $\hat{L}$ such that $F(U)$ is an embedded Lagrangian. By (\ref{eq-levelsettangentdecomposition}), the intersection of $T_{F(p)}F(U)$ with $P \times \mathbb{C}$ is transverse in $\mu^{-1}(0)$, and so $F(U) \cap (P\times \mathbb{C})$ is an embedded submanifold of dimension 2. It follows that $F|_{F^{-1}(P\cap\mathbb{C})}$ is a Lagrangian immersion. The $C_k$-invariance follows from the definition of the $C_k$ action on $P$ (see \cite[Proposition 4.6]{Wood2022}).
\end{proof}

Furthermore, we may express the Lagrangian angle of a cohomogeneity-two Lagrangian $L$ in terms of the Lagrangian angle of $\check{L}\subset P \times \mathbb{C}$. For the purpose of defining the latter Lagrangian angle, we identify the 2-dimensional Kähler subspace $P \times \mathbb{C}$ with $\mathbb{C}^2$ (with its standard Calabi-Yau structure) using an isomorphism $\Phi$.

\begin{proposition}\label{prop-lagangle}
    In the setting of Assumption \ref{as1}, there exists an isomorphism $\Phi:P \to \mathbb{C}$ such that the following is true.

    Consider a $C_k$-invariant immersed Lagrangian submanifold $\check{L} \subset P \times \mathbb{C}$, corresponding to a cohomogeneity-two Lagrangian $L := \{g\cdot w \, : \, w \in \check{L}\}$, and denote, for $w = (w', w_m) \in \check{L}$:
    \begin{itemize}
        \item $\theta_L (g \cdot w)$ the Lagrangian angle of $L \subset \mathbb{C}^m$ at $g \cdot w$,
        \item $\theta_{\check{L}}(w)$ the Lagrangian angle of $(\Phi\oplus \text{Id})(\check{L}) \subset \mathbb{C}^2$ at $(\Phi\oplus \text{Id})(w)$,
        \item $\arg_{\Phi}(w')$ the argument of $\Phi(w')$ in $\mathbb{C}$.
    \end{itemize}
    Then for $w = (w', w_m) \in L$ and $g \in G$:
    \[ \theta_L(g\cdot w) \equiv \theta_{\check{L}}(w) \, + \, (m-2)\arg_{\Phi}(w') \quad (\emph{mod } \pi).\]
\end{proposition}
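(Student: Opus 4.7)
The plan is to reduce the statement via $G$-invariance to a point $w = (w', w_m) \in \check L$, and then compute $\theta_L(w)$ by factorising $\Omega_0$ according to the ambient decomposition \eqref{eq-ambientspacedecomposition}. Since $G \leq \SU(m-1)$ acts on $\mathbb{C}^m$ trivially in the last coordinate, it sits inside $\SU(m)$ and preserves $\Omega_0$; hence $\theta_L$ is $G$-invariant, and it suffices to prove the identity at $g = e$.

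At $w$, the tangent space of $L$ splits as $T_w L = T_{w'}\mathcal{O}_{w'} \oplus T_w \check L$ by Proposition \ref{prop-correspondence}, while the ambient space decomposes as $\mathbb{C}^m = N \oplus (P \times \mathbb{C})$, where $N := T_{w'}\mathcal{O}_{w'} \oplus JT_{w'}\mathcal{O}_{w'}$ is the complex $(m-2)$-dimensional span of the orbit tangent space. Since $\rho(X)_z = -Xz$ is $\mathbb{C}$-linear in $z$, one checks that $N$ is the same complex subspace of $\mathbb{C}^{m-1}$ for every $w' \in P \setminus \{0\}$. Picking a unitary basis of $\mathbb{C}^m$ adapted to this decomposition yields, up to orientation sign, a factorisation $\Omega_0 = \Omega_N \wedge \Omega_{P \times \mathbb{C}}$, and evaluating on an adapted basis $(\rho(X_1)_{w'}, \ldots, \rho(X_{m-2})_{w'}, v_1, v_2)$ of $T_w L$ gives
\[ \Omega_0|_{T_w L} \;=\; \Omega_N(\rho(X_1)_{w'}, \ldots, \rho(X_{m-2})_{w'}) \cdot \Omega_{P \times \mathbb{C}}(v_1, v_2). \]

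The key computation is on the orbit factor. Fixing a reference point $w'_0 \in P$ and writing $w' = \lambda w'_0$ with $\lambda \in \mathbb{C}^*$, the $\mathbb{C}$-linearity of $\rho(X)$ gives $\rho(X_j)_{w'} = \lambda\, \rho(X_j)_{w'_0}$, and multilinearity of $\Omega_N$ produces
\[ \Omega_N(\rho(X_1)_{w'}, \ldots, \rho(X_{m-2})_{w'}) \;=\; \lambda^{m-2}\, \Omega_N(\rho(X_1)_{w'_0}, \ldots, \rho(X_{m-2})_{w'_0}). \]
I would then define the isomorphism $\Phi : P \to \mathbb{C}$ precisely so that $\Phi(w'_0)$ absorbs the constant phase of $\Omega_N(\rho(X_j)_{w'_0})$; then the orbit factor is a positive real multiple of $\Phi(w')^{m-2}$. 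Interpreting $\Omega_{P \times \mathbb{C}}(v_1, v_2)$ under the identification $(\Phi \oplus \mathrm{Id}) : P \times \mathbb{C} \to \mathbb{C}^2$ makes it a positive real multiple of $e^{i\theta_{\check L}(w)}$, by the very definition of $\theta_{\check L}$. Taking arguments yields $\theta_L(w) = (m-2)\arg_\Phi(w') + \theta_{\check L}(w)$.

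The main obstacle is the orientation bookkeeping responsible for the congruence being modulo $\pi$ rather than $2\pi$. The factorisation $\Omega_0 = \pm\, \Omega_N \wedge \Omega_{P \times \mathbb{C}}$ depends on compatible orientations of $T_{w'}\mathcal{O}_{w'}$ and $T_w \check L$ with that of $T_w L$, and such compatibility cannot in general be arranged globally — a sign flip in either orientation shifts the corresponding angle by $\pi$. Verifying that this is the only ambiguity, and that the choices of $\Phi$ and of a basis of $\mathfrak{g}/\Stab(w'_0)$ can be made consistently over all of $P \setminus \{0\}$, will be the most delicate check.
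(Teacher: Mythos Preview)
Your approach is correct and essentially the same as the paper's: both reduce to $g=e$ by $G$-invariance, split $T_wL$ into orbit and profile directions, and extract the $(m-2)\arg_\Phi(w')$ term from the $\mathbb{C}$-linear dependence of the orbit tangent space on the base point in $P$. The paper avoids your explicit factorisation $\Omega_0 = \Omega_N \wedge \Omega_{P\times\mathbb{C}}$ and the attendant compatibility check by normalising in a single step---choosing $V\in P$ with $\Omega_0(\pm\nu_V,V,e_m)\in\{\pm1\}$ and setting $\Phi(V)=1$---which simultaneously fixes the orbit phase and aligns $\Omega_0|_{P\times\mathbb{C}}$ with the standard form on $\mathbb{C}^2$; the scaling is then phrased via the pullback identity $(e^{i\alpha})^*\Omega_0=e^{im\alpha}\Omega_0$ rather than your direct use of $\rho(X)_{\lambda w'_0}=\lambda\,\rho(X)_{w'_0}$, but these are equivalent.
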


\begin{proof}
    For a point $v \in P \times \mathbb{C}$, we embed $P \times \mathbb{C} \subset \mathbb{C}^{m-1}\times \mathbb{C}$ as usual, and consider the orbit $\mathcal{O}_v$ of $v$ in $\mathbb{C}^m$ of dimension $m-2$. Let $\pm\nu_{v}$ denote the pair of unit elements of $\Lambda^{m-2}T_v\mathcal{O}_v$.

    Noting that $(e^{i\phi})^*\Omega = e^{im\phi}\Omega$, it follows that we may choose $V \in P$ such that $\{\Omega_V(\pm\nu_V, V, e_m)\} = \{\pm 1\}$. Define $\Phi:P \to \mathbb{C}$ to be the unique complex linear map such that $\Phi(V) = 1$. This implies that, for $X, Y \in P \times \mathbb{C}$, 
    \begin{equation}\label{eq-omegaeq1}
        \{\Omega_V(\pm \nu_V, X, Y)\} \, = \, \{\pm \Omega_0((\Phi\oplus\text{Id})X,(\Phi\oplus\text{Id})Y)\},
    \end{equation}
    where $\Omega_0$ denotes the standard holomorphic volume form on $\mathbb{C}^2$. Choose a point $w=(w', w_m) \in \check{L}$, and choose an orthonormal basis $\{X_1, X_2\}$ of $T_w\check{L}$, so that
    \begin{equation}\label{eq-omegaeq2}
        e^{i \theta_{\check{L}}(w)} = \Omega_0((\Phi\oplus\text{Id})X_1, (\Phi\oplus\text{Id})X_2).
    \end{equation}
    Then, writing $w' = re^{i\alpha}$, and using (\ref{eq-omegaeq1}) and (\ref{eq-omegaeq2}) for the final equality:
    \begin{align*}
        \pm e^{i\theta_L(w)} \, &= \, \Omega_{re^{i\alpha}V}(\pm \nu, X_1, X_2)\\
        &= \, ((e^{i\alpha})^*\Omega)_{rV}(\pm \nu, e^{-i\alpha} X_1, e^{-i\alpha}X_2)\\
        &=e^{i(m-2)\alpha}\,\Omega_{rV}(\pm \nu, X_1, X_2)\\
        &= \pm e^{i(m-2)\alpha\, + \, \theta_{\check{L}}(w)}
    \end{align*}
    (where all equalities are of sets). This implies the result for $g=e$; the general case follows since $L$ is $G$-invariant and $G$ preserves the form $\Omega$.
\end{proof}

We conclude in the following corollary that the study of cohomogeneity-two translating solitons is reduced to studying certain 2-dimensional Lagrangian submanifolds in $\mathbb{C}^2$.

\begin{corollary}[Characterisation of cohomogeneity-two translators/special Lagrangians]\label{cor-cohom2translators}
    In the setting of Assumption \ref{as1}, there is a 1-1 correspondence between:
    \begin{itemize}
        \item $G$-invariant embedded/immersed cohomogeneity-two Lagrangian translating solitons $L \subset Z \times \mathbb{C} \subset \mathbb{C}^m$,
        \item $C_k$-invariant embedded/immersed Lagrangian submanifolds $\check{L}\subset \mathbb{C}^2$, satisfying the equation
        \begin{equation}\label{eq-cohom2translator} \theta_{\check{L}}(z_1, z_2) + (m-2)\emph{arg}(z_1) + \emph{Im}(z_2)\, \equiv \, 0 \quad (\emph{mod }\pi).
        \end{equation}
    \end{itemize}
    Furthermore, there is a 1-1 correspondence between:
    \begin{itemize}
         \item $G$-invariant embedded/immersed cohomogeneity-two special Lagrangians $L \subset Z \times \mathbb{C} \subset \mathbb{C}^m$ (with Lagrangian angle $0$),
        \item $C_k$-invariant embedded/immersed Lagrangian submanifolds $\check{L}\subset \mathbb{C}^2$, satisfying the equation
        \begin{equation}\label{eq-cohom2SL}
        \theta_{\check{L}}(z_1, z_2) + (m-2)\emph{arg}(z_1) \, = \, 0.
        \end{equation}
    \end{itemize}
    In both cases, the map $L \mapsto \check{L}$ is given by intersection with $P \times \mathbb{C} \subset \mathbb{C}^{m-1}\times\mathbb{C}$, and the map $\check{L} \mapsto L$ is given by $L := G \cdot \check{L}$.
\end{corollary}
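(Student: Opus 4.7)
The plan is to derive both correspondences as direct consequences of Proposition \ref{prop-correspondence} (which already gives the correspondence at the level of Lagrangian submanifolds) combined with the Lagrangian angle formula of Proposition \ref{prop-lagangle}, after translating the PDE conditions from $L$ to $\check L$.

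First, I would fix the isomorphism $\Phi \oplus \text{Id}:P \times \mathbb{C} \to \mathbb{C}^2$ of Proposition \ref{prop-lagangle} and use Proposition \ref{prop-correspondence} to set up a bijection between $G$-invariant cohomogeneity-two Lagrangians $L \subset Z \times \mathbb{C}$ and $C_k$-invariant 2-dimensional Lagrangians $\check L \subset \mathbb{C}^2$, with $L = G \cdot \check L$ and $\check L = L \cap (P \times \mathbb{C})$. It then remains to show that the translator condition on $L$ is equivalent to equation \eqref{eq-cohom2translator} on $\check L$, and similarly for the special Lagrangian case.

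The key observation for the translator correspondence is that both sides of the translator equation $\theta_L(z) + \text{Im}(z_m) = 0$ are $G$-invariant functions on $L$: the function $\text{Im}(z_m)$ is $G$-invariant because $G \leq \SU(m-1)$ acts trivially on the last coordinate, and $\theta_L$ is $G$-invariant because $G \leq \SU(m)$ preserves $\Omega_0$. Consequently, the equation holds on all of $L = G\cdot \check L$ if and only if it holds on $\check L$. Applying Proposition \ref{prop-lagangle} to substitute $\theta_L(w) \equiv \theta_{\check L}(w) + (m-2)\arg_\Phi(w') \pmod \pi$, the translator condition on $\check L$ translates directly to equation \eqref{eq-cohom2translator}. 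The special Lagrangian case is analogous: $\theta_L = 0$ is a $G$-invariant condition, and by the same substitution it reduces to $\theta_{\check L}(z_1,z_2) + (m-2)\arg(z_1) \equiv 0 \pmod \pi$.

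The main technical subtlety will be upgrading the special Lagrangian equation from a congruence modulo $\pi$ to the stated equality in \eqref{eq-cohom2SL}. Since $\check L$ is connected (on each component), the integer $n$ for which $\theta_{\check L} + (m-2)\arg(z_1) = n\pi$ is locally constant, and by choosing the orientation of $\check L$ compatibly with the orientation of $L$ fixed by $\theta_L \equiv 0 \pmod {2\pi}$, this integer can be normalized to zero. For the converse direction in both cases, if $\check L$ satisfies the stated equation, then defining $L := G \cdot \check L$ and using Proposition \ref{prop-lagangle} together with the $G$-invariance of $\theta_L$ and $\text{Im}(z_m)$ gives the required translator or special Lagrangian condition at every point of $L$. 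Embeddedness/immersedness is inherited from Proposition \ref{prop-correspondence}, completing both correspondences.
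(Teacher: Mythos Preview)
Your proposal is correct and follows exactly the intended approach: the corollary is an immediate consequence of combining the Lagrangian correspondence of Proposition~\ref{prop-correspondence} with the Lagrangian-angle formula of Proposition~\ref{prop-lagangle}, together with the $G$-invariance of $\theta_L$ and $\mathrm{Im}(z_m)$. The paper in fact gives no separate proof, treating the result as a direct corollary of those two propositions, so your write-up (including the remark on upgrading the mod-$\pi$ congruence to an equality in the special Lagrangian case) simply fills in the straightforward details.
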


\subsection{Symmetric Lagrangians with Translational Symmetries}\label{sec-gaffine}

We briefly consider more general affine symmetry groups $G \leq \text{ASU}(m-1)$. We show we may reduce to the case where $G$ contains no one-parameter family of pure translations, equivalently where $L$ has no linear factor.

\begin{proposition}\label{prop-translationreduction}
    Consider a connected Lagrangian $L \subset \mathbb{C}^{m-1}$ with connected symmetry group $G := \emph{Sym}^0(L) \leq \emph{ASU}(m-1)$ with Lie algebra $\mathfrak{g} := \emph{Lie}(G) \leq \mathfrak{asu}(m-1)$. Let $P := \exp(\mathfrak{g} \cap (0 \times \mathbb{C}^{m-1}))$ be the identity component of the subgroup of pure translational symmetries, and denote $d := \dim(P)$. Then:

    \begin{itemize}
        \item $P$ is  an isotropic subspace of $\mathbb{C}^{m-1}$. $G$ may be expressed as $G' \ltimes P$, where $G'$ preserves the subspace $\mathbb{C}^{m-1-d} := (P \oplus JP)^\perp$.
        \item Up to a translation by $w \in JP$, $L = P \times L'$ for $L' \subset (P \oplus JP)^\perp \cong \mathbb{C}^{m-1-d}$ a Lagrangian submanifold. 
        \item $\check L$ is invariant under the restricted group $G'|_{\mathbb{C}^{m-1-d}} \leq \emph{ASU}(m-1-d)$. This contains no pure translations, in the sense that $\emph{Lie}(G'|_{\mathbb{C}^{m-1-d}}) \cap (0\times \mathbb{C}^{m-1-d}) = 0.$  Furthermore, if $L$ is cohomogeneity-$k$ with respect to $G \ltimes P$, then $L'$ is cohomogeneity-$k$ with respect to $G'$.
    \end{itemize}
\end{proposition}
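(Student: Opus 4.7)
The plan is to use the Lagrangian condition to confine $L$ to an affine subspace parallel to $P \oplus V$, then obtain the semidirect product decomposition by constructing an explicit section after translating. Throughout, set $V := (P \oplus JP)^\perp = \mathbb{C}^{m-1-d}$.

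\emph{Isotropy and localisation.} For $p \in \mathfrak{g} \cap (0 \times \mathbb{C}^{m-1})$, invariance of $L$ under $z \mapsto z + sp$ forces $p \in T_qL$ for all $q \in L$, so the Lagrangian condition $\omega_0|_{T_qL}=0$ forces $\omega_0(p, p') = 0$ on $P$; hence $P$ is isotropic and $\mathbb{C}^{m-1} = P \oplus JP \oplus V$ is an orthogonal decomposition. Using $\omega_0(u,v) = g_0(Ju,v)$, the same containment $P \subset T_qL$ gives $T_qL \subset P^{\perp_{\omega_0}} = (JP)^{\perp_{g_0}} = P \oplus V$, so the $JP$-projection of $L$ is locally constant; by connectedness, $L \subset w + (P \oplus V)$ for some $w \in JP$. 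Set $\tilde L := L - w \subset P \oplus V$; $P$-invariance then gives $\tilde L = P \times L'$ with $L' := \tilde L \cap V \subset V$. The tangent decomposition $T_{(p,v)}\tilde L = P \oplus T_v L'$ together with $\omega_0(P, P) = 0$ and $\omega_0(P, V) = 0$ reduces the Lagrangian condition on $\tilde L$ to isotropy of $L'$; the count $\dim L' = m-1-d = \tfrac{1}{2}\dim_\mathbb{R} V$ then makes $L'$ Lagrangian.

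\emph{Group decomposition.} The bracket identity $[(X, v), (0, p)] = (0, Xp)$ in $\mathfrak{asu}(m-1)$ shows $\mathfrak{g} \cap (0 \times \mathbb{C}^{m-1})$ is an ideal, so $P$ is normal in $G$. Any $X$ in the linear projection of $\mathfrak{g}$ to $\mathfrak{su}(m-1)$ therefore preserves $P$, and being complex-linear and skew-Hermitian also preserves $JP$ and $V$; connectedness of $G$ and the identity $\det(A) = \det_{\mathbb{C}}(A|_{P\oplus JP})\det(A|_V) = \det_\mathbb{R}(A|_P)\det(A|_V)$ then yield $A|_V \in \SU(V)$ for each linear part. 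Pass to the conjugate group $\tilde G := (I, -w) G (I, w) = \text{Sym}^0(\tilde L)$. Since $\tilde G$ preserves $\tilde L \subset P \oplus V$ and each linear part $A$ preserves $P \oplus V$, the translational component $t$ of every $(A, t) \in \tilde G$ lies in $P \oplus V$. For each $A \in G' := G/P$, let $v_A \in V$ be the unique $V$-representative of the admissible coset $\{t : (A, t) \in \tilde G\}$; since $A$ preserves $V$, the product $(A_1, v_{A_1})(A_2, v_{A_2}) = (A_1 A_2, A_1 v_{A_2} + v_{A_1})$ has $V$-valued translation, so $s(A) := (A, v_A)$ is a group homomorphism and $\tilde G = s(G') \ltimes P$, with $s(G')|_V \leq \text{ASU}(V)$.

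\emph{Remaining details and the main obstacle.} The absence of a pure translation in $\text{Lie}(s(G')|_V) \cap (0 \times V)$ follows since any such translation would be a pure-translation symmetry of $L$ lying in $V$, but the pure-translation Lie algebra $\mathfrak{g} \cap (0\times\mathbb{C}^{m-1})$ equals $\text{Lie}(P)$, which satisfies $\text{Lie}(P) \cap V = P \cap V = \{0\}$. For the cohomogeneity statement, a direct computation shows orbits of $\tilde G$ on $\tilde L = P \times L'$ factor as $P \times (s(G')|_V\text{-orbit in }L')$, so orbit dimensions satisfy $\dim\mathcal{O} = d + \dim\mathcal{O}_{L'}$; cohomogeneity-$k$ of $L$ under $G$ therefore equates to cohomogeneity-$k$ of $L'$ under $G'|_V$. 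The central obstacle is the group splitting: a priori the assignment $A \mapsto t_A$ is a $1$-cocycle valued in $\mathbb{C}^{m-1}/P$ with potentially nontrivial $JP$-component, and cancelling it via an inner automorphism via cohomology vanishing would require compactness of $G'$. The geometric observation bypassing this is that after translating $L$ into $P \oplus V$, the $JP$-cocycle vanishes on the nose, and the explicit $V$-valued section is a homomorphism with no further adjustment needed.
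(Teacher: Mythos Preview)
Your proof is correct and follows essentially the same route as the paper's, but with more detail in two places worth noting. First, where the paper invokes the moment map $\mu_P(z) = \langle -iz,\cdot\rangle$ for the $P$-action together with Joyce's level-set result to conclude that the $JP$-projection of $L$ is constant, you obtain the same conclusion by the elementary containment $T_qL \subset P^{\perp_{\omega_0}} = (JP)^{\perp_{g_0}}$; the two arguments are equivalent, yours simply unpacks the moment-map statement. Second, the paper asserts the splitting $G = G' \ltimes P$ with $G'$ having translational parts in $V$ without further comment, whereas you construct the section $s(A)=(A,v_A)$ explicitly and explain why the geometric normalisation (translating $L$ into $P\oplus V$) forces the a priori $JP$-valued cocycle to vanish. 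Your treatment also supplies the orbit-dimension count for the cohomogeneity claim, which the paper omits. In short: same argument, your version fills in the steps the paper leaves implicit.
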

\begin{proof}

    Since $L$ is $G$-invariant, it follows that $P$ is isotropic. The action of $P$ has moment map $\mu_P:\mathbb{C}^{m-1} \to P^*$, $\langle \mu_P(z), \, p\rangle = \langle -iz, \, p\rangle.$ Since $L$ is invariant under the action of $P$, it follows from \cite[Corollary 4.4]{Joyce2001a} that there exists $\xi \in P
    ^*$ such that $L \subset \mu_P^{-1}(\xi)$, which means that the projection of $L$ to $JP$ is constant. Translating by this constant, we may assume that $L = \check L \times P \subset \mathbb{C}^{m-1-d} \times P$, for $\mathbb{C}^{m-1-d} := (P\oplus JP)^\perp$.

    Now, we may write $G = G' \ltimes P$, where the elements of $G'$ are of the form $(g, v)$ for $g \in \SU(m-1)$, $v \in \mathbb{C}^{m-1-d}$. Note that for $(g,v) \in G'$ and $(\text{Id}, p) \in P$,
    \[ (g, v)\cdot (\text{Id}, p) \cdot (g, v)^{-1} \, = \, (\text{Id}, g\cdot p) \, \in \, P,\]
    so $G$ preserves the orthogonal decomposition $P \oplus JP \oplus \mathbb{C}^{m-1-d}$. It follows that $\check L$ is invariant under the group $G' |_{\mathbb{C}^{m-1-d}}$. Finally, $G'|_{\mathbb{C}^{m-1-d}}$ cannot have any pure translations, else $\check L$, and therefore $L$, would have those same translational symmetries, and so they would have been elements of $P$.
\end{proof}

\begin{remark} By the above proposition we may extend the structure theory of Section \ref{sec-glinear} to apply to Lagrangians with symmetry groups of the form $G' \ltimes P$ where $G'\leq \SU(m-1)$ and $P$ is a $G'$-invariant isotropic subspace, by applying it to $L'$ with symmetry group $G'|_{\mathbb{C}^{m-1-d}}$.
\end{remark}

\begin{remark}
    We note that there are affine subgroups that are not of the form $G \ltimes P$, i.e. `screw-type' subgroups that combine linear and translational movements. Example \ref{ex-um-1} gives one such subgroup, and we consider examples of Lagrangian translators invariant under such subgroups in Section \ref{sec-moreexamples}. The structure of the moment map level sets is different to those of linear subgroups, and the theory of Section \ref{sec-glinear} does not apply.
\end{remark}

\section{Cohomogeneity-Two Translators and Special Lagrangians}\label{sec-5}

We now use Corollary \ref{cor-cohom2translators} together with a generalisation of a construction by Castro--Lerma to construct new cohomogeneity-two Lagrangian translators and special Lagrangians.

\subsection{Definition of the Ansatz}\label{sec-ansatz}

Let $I\subseteq\mathbb{R}$ be an open interval and $s_{0}\in I$. Let $\gamma:I\to\mathbb{C}$ be a smooth curve, $\gamma(s) = x(s) + iy(s)$, $x(s), y(s)\in\mathbb{R}$. Consider the function $\beta_\gamma: I \to \mathbb{R}$, defined by
\begin{equation}
    \beta_{\gamma}(s) \, := \,  -\int_{s_{0}}^{s}\langle\gamma(t), i\gamma'(t)\rangle\:dt.
\end{equation}
Then if $\lambda := \frac{1}{2i}(\overline z dz - zd\overline{z})$ denotes the standard Liouville form on $\mathbb{C}$ (satisfying $d\lambda = 2\omega_0$), we have:
\begin{align*}
    \gamma^*(\lambda) = (x(s)y'(s) - y(s)x'(s))\:ds = \langle\gamma(s), -i\gamma'(s)\rangle\:ds = \frac{d\beta_{\gamma}}{ds}\:ds,
\end{align*}
i.e. $\beta_{\gamma}$ is the primitive of $\gamma
^*(\lambda)$. We also define the related integral quantity
\begin{equation}
    \widetilde \beta_\gamma(s) \, := \, -\int_{s_0}^s |\gamma^2(t)|\, \langle \gamma(t), \, i\gamma'(t)\rangle \: dt,
\end{equation}
which satisfies $d\widetilde \beta_\gamma(s) = |\gamma(s)|^2\frac{d\beta_\gamma}{ds}ds \, = \, |\gamma(s)|^2 \, \gamma^*(\lambda)$, i.e. $\widetilde \beta_\gamma$ is the primitive of $|\gamma|^2\gamma^*(\lambda)$.

\begin{remark}\label{rem-betapositivity}
    We note that if a curve $\gamma$ is parametrised in polar coordinates $\gamma = r(s)e^{is}$, we have
\begin{align*}
    \gamma'(s) &= (r'(s) + ir(s))e^{is},
    \,\,\,\, \beta_{\gamma}(s) \, = \, \int_{s_0}^s |r(t)|^2\, dt.
\end{align*}
In particular, $\beta_\gamma(s)$ is strictly increasing in $s$.
\end{remark}

The following Ansatz, based on that of Castro and Lerma \cite{Castro2010}, produces a Lagrangian surface from two curves $\gamma$, $\xi \subset \mathbb{C}$.

\begin{proposition}\label{prop-lagrangianansatz}
    Let $I_{1}, I_{2}\subseteq\mathbb{R}$ be open intervals, and let $\gamma: I_{1}\to\mathbb{C}$,  $\xi:I_{2}\to\mathbb{C}$ be smooth curves, at least one of which doesn't intersect $0$. Then the map $\check F:I_{1}\times I_{2}\to\mathbb{C}^{2}$ given by
    \begin{align}\label{eq-ansatz}
        \check F(x, y) = \left(\gamma(x)\xi(y), \frac{1}{2}(|\xi(y)|^{2} - |\gamma(x)|^{2}) + i(\beta_{\xi}(y) - \beta_{\gamma}(x)) \right)
    \end{align}
    is a Lagrangian immersion, with Lagrangian angle
    \begin{align}
        \check \theta(x, y) \, = \, \arg\left(\gamma'(x)\right)+\arg\left(\xi'(y)\right).
    \end{align}
    Furthermore, if $\lambda := \frac{1}{2i}\sum_{i=1}^2(\overline z_i dz_i - z_id\overline{z_i})$ denotes the standard Liouville form on $\mathbb{C}^2$, then 
    \begin{equation}\label{eq-restrictionofliouville}
        \check F^*(\lambda) = d\left( -\frac{1}{2}(|\gamma(x)|^2 - |\xi(y)|^2)(\beta_\gamma(x) - \beta_\xi(y)) + \widetilde \beta_\gamma(x) + \widetilde \beta_\xi(y) \right),
    \end{equation}
    i.e. the image of $\check F$ is an exact Lagrangian submanifold of $\mathbb{C}^2$.
\end{proposition}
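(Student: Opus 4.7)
The plan is to verify all three conclusions by direct computation from the explicit formula for $\check F$. First I would compute the two partial derivatives. Using that $\beta_\gamma'(x) = \mathrm{Im}(\overline{\gamma(x)}\gamma'(x))$ (and similarly for $\xi$), while $\partial_x|\gamma|^2 = 2\mathrm{Re}(\overline\gamma\gamma')$, the second coordinate of $\check F$ assembles into a clean complex expression and one finds
\begin{align*}
\check F_x &= \bigl(\gamma'(x)\xi(y),\; -\overline{\gamma(x)}\gamma'(x)\bigr), \\
\check F_y &= \bigl(\gamma(x)\xi'(y),\; \overline{\xi(y)}\xi'(y)\bigr).
\end{align*}
The hypothesis that one of $\gamma,\xi$ avoids $0$ (together with $\gamma',\xi'\neq 0$) immediately gives linear independence, so $\check F$ is an immersion.

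Next I would verify the Lagrangian condition. Since $\omega_0(X,Y)=\mathrm{Im}\,\langle X,Y\rangle_{\mathbb{C}^2}$, the pairing $\langle \check F_x,\check F_y\rangle_{\mathbb{C}^2}$ equals
\[
\gamma'(x)\,\overline{\gamma(x)}\,\xi(y)\,\overline{\xi'(y)} \;-\; \overline{\gamma(x)}\gamma'(x)\,\xi(y)\,\overline{\xi'(y)} \;=\; 0,
\]
so $\check F^*\omega_0=0$. In fact this also shows that $\check F_x\perp \check F_y$ in the real inner product. Computing norms yields $|\check F_x|^2 = |\gamma'|^2(|\gamma|^2+|\xi|^2)$ and $|\check F_y|^2 = |\xi'|^2(|\gamma|^2+|\xi|^2)$, so the induced volume density is $|\check F_x\wedge \check F_y| = |\gamma'||\xi'|(|\gamma|^2+|\xi|^2)$.

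For the Lagrangian angle, I would apply $\Omega_0 = dz_1\wedge dz_2$ to $(\check F_x,\check F_y)$. The cross terms give
\[
\Omega_0(\check F_x,\check F_y) \;=\; \gamma'\xi\cdot\overline\xi\xi' \;-\; \bigl(-\overline\gamma\gamma'\bigr)\cdot\gamma\xi' \;=\; \bigl(|\gamma|^2+|\xi|^2\bigr)\gamma'(x)\xi'(y).
\]
Dividing by the volume density extracts $e^{i\check\theta}=\gamma'\xi'/(|\gamma'||\xi'|)$, hence $\check\theta = \arg\gamma'(x)+\arg\xi'(y)$.

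The exactness claim is the only step where the computation is lengthy, and will be the main obstacle to keep tidy. I would compute $\check F^*\lambda$ term by term, using $\mathrm{Im}(\overline{z_j}\,dz_j)$ and the expressions above for $\check F_x,\check F_y$. The first coordinate contributes $|\xi|^2\beta_\gamma'\,dx + |\gamma|^2\beta_\xi'\,dy$ to the pullback, while the second coordinate, after splitting $\overline{z_2}\,\partial_\bullet z_2$ into its real and imaginary parts and using $\partial_x\beta_\gamma = \beta_\gamma'$, $\partial_x|\gamma|^2=(|\gamma|^2)_x$, etc., yields
\begin{align*}
\check F^*\lambda \;=\;&\Bigl[\tfrac12(|\gamma|^2+|\xi|^2)\beta_\gamma'(x) + \tfrac12(|\gamma|^2)_x\bigl(\beta_\xi(y)-\beta_\gamma(x)\bigr)\Bigr]\,dx \\
&+\Bigl[\tfrac12(|\gamma|^2+|\xi|^2)\beta_\xi'(y) - \tfrac12(|\xi|^2)_y\bigl(\beta_\xi(y)-\beta_\gamma(x)\bigr)\Bigr]\,dy.
\end{align*}
Then I would differentiate the proposed primitive $\Phi(x,y) := -\tfrac12(|\gamma|^2-|\xi|^2)(\beta_\gamma-\beta_\xi) + \widetilde\beta_\gamma(x) + \widetilde\beta_\xi(y)$, using the key identities $\widetilde\beta_\gamma'=|\gamma|^2\beta_\gamma'$ and $\widetilde\beta_\xi' = |\xi|^2\beta_\xi'$ noted in the definition of $\widetilde\beta$. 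The cross terms in $\Phi_x$ and $\Phi_y$ combine with $\widetilde\beta_\gamma', \widetilde\beta_\xi'$ to produce exactly the $dx$ and $dy$ coefficients above, completing the proof. The whole argument is essentially a careful bookkeeping of real and imaginary parts; the one step where an error would be easy to make is tracking the sign of $\partial_x|\gamma|^2$ versus $\beta_\gamma'=\mathrm{Im}(\overline\gamma\gamma')$ consistently throughout.
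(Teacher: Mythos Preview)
Your proposal is correct and follows essentially the same approach as the paper: both compute the partial derivatives to get $\check F_x=\gamma'(\xi,-\overline\gamma)$, $\check F_y=\xi'(\gamma,\overline\xi)$, verify the Lagrangian condition and angle via the Hermitian pairing and the complex determinant, and then handle the Liouville form by direct computation. Your organisation of the exactness step (working with $\mathrm{Im}(\overline{z_j}\,dz_j)$ and then matching against $d\Phi$) is a minor repackaging of the paper's computation, which instead manipulates $\overline{z_j}dz_j-z_jd\overline{z_j}$ separately before recognising the total as an exact form; the intermediate expression you display for $\check F^*\lambda$ agrees with theirs.
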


\begin{remark}
    Up to translation in the second complex coordinate, the function $\check F$ is given by \[ \check F(x,y) \, = \, \left( \gamma(x)\xi(y), \, \int_0^y \overline {\xi(s)} \xi'(s)\, ds \, - \, \int_0^x \overline {\gamma(s)} \gamma'(s)\,ds \right).\]
\end{remark}

\begin{proof}
    Observe that by the definition of $\beta_{\xi}$,
    \begin{align*}
        \frac{d}{dy}\left(\frac{|\xi(y)|^{2}}{2} + i\beta_{\xi}(y)\right) &= \frac{\overline{\xi(s)}\xi'(s) + \overline{\xi'(s)}\xi(s)}{2} - i\langle\xi(s), i\xi'(s)\rangle\\
        &= \re\left(\overline{\xi(s)}\xi'(s)\right) + i\im\left(\overline{\xi(s)}\xi'(s)\right)\\
        &= \overline{\xi(s)}\xi'(s).
    \end{align*}
    The same holds for $\gamma(x)$. Thus,
    \begin{align*}
        &\frac{\partial \check F}{\partial x} = \left(\gamma'(x)\xi(y), -\gamma'(x)\overline{\gamma(x)}\right) = \gamma'(x)(\xi(y), -\overline{\gamma(x)}),\\
        &\frac{\partial \check F}{\partial y} = \left(  \gamma(x)\xi'(y), \xi'(y)\overline{\xi(y)}\right) = \xi'(y)(\gamma(x), \overline{\xi(y)}),
    \end{align*}
    so since one of $\gamma, \xi$ avoids $0$, $\check F$ is an immersion. Moreover,
    \begin{align*}
      \left\langle \frac{\partial \check F}{\partial x},  J\frac{\partial \check F}{\partial y}\right\rangle  = -\im\left(\overline{\gamma'(x)}\overline{\xi(y)}\xi'(y)\gamma(x) - \overline{\gamma'(x)}\gamma(x)\xi'(y)\overline{\xi(y)}\right) = 0,
    \end{align*}
    so $\check F$ is Lagrangian. The Lagrangian angle of $\check F$ is given by
    \begin{align*}
        \check \theta(x, y) = \arg\det\left(D\check F\right) &= \arg\left(\gamma'(x)\xi'(y)\left(|\gamma(x)|^{2}+|\xi(y)|^{2}\right)\right) \\
        &= \arg\left(\gamma'(x)\right)+\arg\left(\xi'(y)\right).
    \end{align*}
    Finally, we calculate the primitive of the pullback of the  Liouville form $\lambda$. From the definition of $\check F$, along with the fact that $\beta_\gamma'(x) = \im(\overline{\gamma(x)} \gamma'(x))$, we have 
    \begin{align*}
        \check F^*(\overline z_1 dz_1 - z_1 d\overline{z_1}) \, &= \, 2i \im(\overline{\gamma(x)}\gamma'(x))|\xi(y)|^2\,dx \, + \, 2i\im(\overline{\xi(y)}\xi'(y))|\gamma(x)|^2\,dy \\
        &= \, 2i \beta_\gamma'(x)|\xi(y)|^2 dx \, + \, 2i\beta_\xi'(y)|\gamma(x)|^2 dy, \\
        \check F^*(\overline{z_2}dz_2 - z_2 d\overline{z_2}) \, &= \, -2i\left(\beta_\xi(y) - \beta_\gamma(x)\right)\left(\xi(y)\xi'(y)dy - \gamma(x)\gamma'(x)dx\right) \, \\
        &\quad\qquad+ \, i\left(|\xi(y)|^2 - |\gamma(x)|^2\right)\left(\beta'_\xi(y)dy - \beta'_\gamma(x)dx\right),
    \end{align*}
    so that
    \begin{align*}
        \check F^*(\lambda) \, &= \, -\left(\beta_\xi(y) - \beta_\gamma(x)\right)\left(\xi(y)\xi'(y)dy \, - \, \gamma(x)\gamma'(x)dx\right) \:\\
        &\qquad\quad + \, \frac{1}{2} \left(|\xi(y)|^2 + |\gamma(x)|^2\right)\left(\beta'_\xi(y) dy \, + \, \beta'_\gamma(x)dx \right) \\ 
        &= \, -\frac{1}{2} \, d\left( \left(|\gamma(x)|^2 - |\xi(y)|^2\right)\left(\beta_\gamma(x) - \beta_\xi(y)\right) \right) \, + \, d\beta_\gamma(x) |\gamma(x)|^2 \, + \, d\beta_\xi(y) |\xi(y)|^2 \\
        &= \, d\left(
        -\frac{1}{2}\left(|\gamma(x)|^2 - |\xi(y)|^2\right)\left(\beta_\gamma(x) - \beta_\xi(y)\right) \, + \, \widetilde \beta_\gamma(x) \, +\,  \widetilde \beta_\xi(y)
        \right).
    \end{align*}
\end{proof}

We now find conditions on the curves $\gamma$, $\xi$ so that $\check F$ solves equations \eqref{eq-cohom2translator} and \eqref{eq-cohom2SL}. 

\begin{proposition}\label{prop-translatoransatz}
    Let $m\geq 2$, $I_{1}, I_{2}\subseteq\mathbb{R}$ be open intervals, $(a, b)\in\mathbb{R}^{2}$ be constants, and let $\gamma: I_{1}\to\mathbb{C}$,  $\xi:I_{2}\to\mathbb{C}$ be smooth curves satisfying
    \begin{align}\label{eq-curveeqs}
        \vec{\kappa}_{\gamma} - (m-2)\frac{\gamma^{\perp}}{|\gamma|^{2}} = a\gamma^{\perp},\quad \vec{\kappa}_{\xi} - (m-2)\frac{\xi^{\perp}}{|\xi|^{2}} = b\xi^{\perp}.
    \end{align}
    Then the Lagrangian immersion $\check F:I_{1}\times I_{2}\to\mathbb{C}^{2}$ given in Proposition \ref{prop-lagrangianansatz}, equation \eqref{eq-ansatz} has Lagrangian angle
    \begin{align}\label{eq-ansatzlagangleformula}
        \check \theta(x, y) = a_{1}\beta_{\gamma}(x) + b\beta_{\xi}(y) - (m-2)\arg(\gamma(x)\xi(y)) +c
    \end{align}
    for some $c\in\mathbb{R}$.
    In particular:
    \begin{itemize}
        \item If $a = b = 0$, then the immersion $\check F$ satisfies \eqref{eq-cohom2SL}.
        \item If $a = -b = 1$, then the immersion $\check F$ satisfies \eqref{eq-cohom2translator}.
    \end{itemize}
\end{proposition}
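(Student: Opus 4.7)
The strategy is to use the Lagrangian-angle formula $\check\theta(x,y) = \arg(\gamma'(x)) + \arg(\xi'(y))$ already supplied by Proposition \ref{prop-lagrangianansatz}, and exploit the fact that the $x$- and $y$-variables decouple. Since $\gamma$ and $\xi$ play symmetric roles in \eqref{eq-curveeqs} (with $a$ and $b$ interchanged), the proof reduces to establishing a one-variable identity of the form
\[ \arg(\gamma'(x)) \;=\; a\,\beta_\gamma(x) \;-\; (m-2)\arg(\gamma(x)) \;+\; c_1, \]
together with its counterpart for $\xi$ with $b$ in place of $a$. Adding these two identities then yields \eqref{eq-ansatzlagangleformula} with $c = c_1 + c_2$.

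To prove this one-variable identity I would differentiate both sides in $x$ and verify equality using \eqref{eq-curveeqs}. Three elementary derivative formulas are required. First, $\beta_\gamma'(x) = -\langle\gamma, i\gamma'\rangle$, directly from the definition of $\beta_\gamma$. Second, $\frac{d}{dx}\arg(\gamma(x)) = -\langle\gamma, i\gamma'\rangle/|\gamma|^2$, the infinitesimal form of the polar angle (consistent with Remark \ref{rem-betapositivity}). Third, $\frac{d}{dx}\arg(\gamma'(x)) = |\gamma'(x)|\,\kappa_\gamma(x)$, where $\kappa_\gamma$ denotes the signed curvature of $\gamma$. Taking the inner product of \eqref{eq-curveeqs} with the unit normal $i\gamma'/|\gamma'|$ expresses $|\gamma'|\kappa_\gamma$ as a linear combination of $\langle\gamma, i\gamma'\rangle$ and $\langle\gamma, i\gamma'\rangle/|\gamma|^2$ with the coefficients being exactly those appearing on the right-hand side of the claimed identity. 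Integrating in $x$ then produces the identity, with $c_1$ absorbing the integration constant.

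Once \eqref{eq-ansatzlagangleformula} is in hand, the two bullet points follow by direct substitution into Corollary \ref{cor-cohom2translators}. The key is that the first coordinate of $\check F$ is $z_1 = \gamma(x)\xi(y)$, so $\arg(z_1) = \arg(\gamma) + \arg(\xi)$ modulo $2\pi$, and the imaginary part of the second coordinate is $\text{Im}(z_2) = \beta_\xi(y) - \beta_\gamma(x)$ by the definition of $\check F$. For $a = b = 0$ the formula reduces to $\check\theta = -(m-2)\arg(z_1) + c$, and choosing $c = 0$ satisfies \eqref{eq-cohom2SL}. For $a = -b = 1$ the formula rearranges to $\check\theta + (m-2)\arg(z_1) + \text{Im}(z_2) = c$, which satisfies \eqref{eq-cohom2translator} modulo $\pi$ for appropriate choice of $c$.

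The main obstacle is careful sign-tracking: the conventions for the signed curvature $\kappa_\gamma$, the normal projection $\gamma^\perp$, the sign in the definition of $\beta_\gamma$, and the polar-angle derivative must all be reconciled in order to produce the precise coefficients $+a$ and $-(m-2)$ in the one-variable identity. Beyond this, the argument is a routine one-dimensional integration; no genuine geometric or analytic subtlety arises. The proposition is essentially the explicit statement that the curve ODEs \eqref{eq-curveeqs} are the profile-curve reductions of the Lagrangian-angle conditions \eqref{eq-cohom2SL} and \eqref{eq-cohom2translator} for the Castro--Lerma-type Ansatz.
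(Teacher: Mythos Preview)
Your proposal is correct and follows essentially the same approach as the paper: reduce to a one-variable identity for $\arg(\gamma')$ via differentiation, verify the derivative using the curve equation \eqref{eq-curveeqs} together with the elementary formulas for $\beta_\gamma'$ and $\frac{d}{dx}\arg(\gamma)$, then integrate and add the $\gamma$- and $\xi$-identities. The paper packages your three derivative formulas into a separate lemma (computing $\frac{d}{ds}\arg(\gamma)$ and $\frac{d}{ds}\arg(\gamma')$ in arc-length parametrisation rather than via the signed curvature $\kappa_\gamma$), but this is a presentational difference only; the deduction of the two bullet points from \eqref{eq-ansatzlagangleformula} by reading off $z_1 = \gamma\xi$ and $\mathrm{Im}(z_2) = \beta_\xi - \beta_\gamma$ is identical.
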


To prove Proposition \ref{prop-translatoransatz}, we first prove the following Lemma.

\begin{lemma}\label{lem: curve computations}
    Let $I\subseteq\mathbb{R}$ and let $\gamma:I\to\mathbb{C}$ be a smooth curve. Then
    \begin{equation}\label{eq: diff of arg of curve position}
        \frac{d}{ds}\arg(\gamma(s)) = \frac{d\beta_{\gamma}(s)}{ds}|\gamma(s)|^{-2}.
    \end{equation}
    Moreover, if $\gamma$ satisfies
    \begin{equation}
        \vec{\kappa}_{\gamma} - (m-2)\frac{\gamma(s)^{\perp}}{|\gamma(s)|^{2}} = a\gamma(s)^{\perp}
    \end{equation}
    for some $a\in\mathbb{R}$, then
    \begin{equation}\label{eq: diff of arg of curve tangent}
        \frac{d}{ds}\arg(\gamma'(s)) = -\left(\frac{m-2}{|\gamma(s)|^{2}}+a\right)\frac{d \beta_{\gamma}(s)}{ds}. 
    \end{equation}
\end{lemma}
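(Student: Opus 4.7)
The plan is to verify both identities by direct computation. The first reduces to an easy algebraic manipulation in polar coordinates; the second follows by combining the standard tangent-angle/curvature relation for planar curves with the scalar form of the hypothesis on $\gamma$. No substantive obstacle arises — the only care needed is consistent sign bookkeeping between the definition of $\beta_\gamma$, the identification $\gamma^\perp = \langle \gamma, N\rangle N$, and the orientation convention $N = iT$.

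For \eqref{eq: diff of arg of curve position}, I would first rewrite $\beta_\gamma'(s) = -\langle \gamma, i\gamma'\rangle = \im(\overline{\gamma}\gamma')$ using the identification $\langle v, w\rangle = \re(\overline v w)$ on $\mathbb{C}\cong \mathbb{R}^2$. Introducing polar coordinates $\gamma(s) = r(s)e^{i\phi(s)}$ (valid locally wherever $\gamma \neq 0$, and the statement is pointwise in $s$), a one-line calculation gives $\overline{\gamma}\gamma' = r r' + i r^2 \phi'$, so
\[
    \beta_\gamma'(s) \, = \, r(s)^2\,\phi'(s) \, = \, |\gamma(s)|^2 \, \frac{d}{ds}\arg \gamma(s),
\]
which rearranges to \eqref{eq: diff of arg of curve position}.

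For \eqref{eq: diff of arg of curve tangent}, I would use the standard planar-curve identity $\frac{d}{ds}\arg \gamma'(s) = \kappa(s)\,|\gamma'(s)|$, where $\kappa$ denotes signed curvature relative to the orientation $N = iT$, $T = \gamma'/|\gamma'|$. The hypothesis on $\gamma$ is an equation between vectors parallel to $N$; pairing both sides with $N$, and using $\vec\kappa_\gamma = \kappa N$ together with $\gamma^\perp = \langle \gamma, N\rangle N$, yields the scalar equation $\kappa = \bigl(\tfrac{m-2}{|\gamma|^2} + a\bigr)\langle \gamma, N\rangle$. The same manipulation used for the first part shows $\langle \gamma, N\rangle = |\gamma'|^{-1}\langle \gamma, i\gamma'\rangle = -\beta_\gamma'(s)/|\gamma'(s)|$. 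Substituting this and multiplying through by $|\gamma'|$ produces
\[
    \frac{d}{ds}\arg \gamma'(s) \, = \, -\left(\frac{m-2}{|\gamma(s)|^2} + a\right)\beta_\gamma'(s),
\]
which is exactly \eqref{eq: diff of arg of curve tangent}.
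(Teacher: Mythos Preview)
Your proof is correct and follows essentially the same approach as the paper's. For \eqref{eq: diff of arg of curve position} both you and the paper compute $\beta_\gamma' = \im(\overline\gamma\gamma')$ and relate this to $\tfrac{d}{ds}\arg\gamma$; you use polar coordinates while the paper uses logarithmic differentiation, which amounts to the same calculation. For \eqref{eq: diff of arg of curve tangent} the only cosmetic difference is that the paper passes to an arc-length parametrisation (so that $\gamma'' = \vec\kappa_\gamma$ and $\tfrac{d}{ds}\arg\gamma' = \langle\gamma'', i\gamma'\rangle$ directly), whereas you keep a general parametrisation and use the speed-corrected relation $\tfrac{d}{ds}\arg\gamma' = \kappa\,|\gamma'|$; after that both arguments pair the hypothesis with $N$ and substitute $\langle\gamma,N\rangle = -\beta_\gamma'/|\gamma'|$ in the same way.
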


\begin{proof}
    Let $\theta := \arg\gamma$. Write $\gamma = (\gamma\overline{\gamma})^{\frac{1}{2}}e^{i\theta}$, then we find
\begin{align*}
    \log\gamma = \frac{1}{2}\log\gamma + \frac{1}{2}\log\overline{\gamma} + i\theta.
\end{align*}
Thus,
\begin{align*}
    \frac{d}{ds}\theta = \frac{1}{2i}\left(\frac{\gamma'}{\gamma} - \frac{\overline{\gamma}'}{\overline{\gamma}}\right) = \frac{1}{|\gamma|^{2}}\frac{1}{2i}(\overline{\gamma}\gamma' - \gamma\overline{\gamma}')= -\frac{\im(\overline{\gamma}'(s)\gamma(s))}{|\gamma(s)|^{2}} = \frac{d\beta_{\gamma}(s)}{ds}|\gamma(s)|^{-2}.
\end{align*}
This proves (\ref{eq: diff of arg of curve position}). Next, we prove (\ref{eq: diff of arg of curve tangent}). Parametrising $\gamma$ by its arc-length gives
\begin{align*}
    \frac{d}{ds}\gamma'(s) \, = \,  \vec{\kappa}_{\gamma} = \left(\frac{m-2}{|\gamma(s)|^{2}}+a\right)\langle \gamma(s), i\gamma'(s)\rangle i\gamma'(s) \, = \,  -\left(\frac{m-2}{|\gamma(s)|^{2}}+a\right)\frac{d \theta}{ds} |\gamma(s)|^2 i\gamma'(s).
\end{align*}
The result follows by noting that $\frac{d}{ds}\arg(\gamma'(s)) \, = \, \langle \gamma'', \, i\gamma'\rangle$.
\end{proof}

\begin{proof}[Proof of Proposition \ref{prop-translatoransatz}]
From Lemma~\ref{lem: curve computations}, we have
\begin{align*}
    \frac{d}{dx}\arg\left(\gamma'(x)\right) &= -\left(\frac{m-2}{|\gamma(x)|^{2}} + a\right)\frac{d\beta_{\gamma}(x)}{dx}\\
    &= -(m-2)\frac{d}{dx}\arg\left(\gamma(x)\right) + a\frac{d}{dx}\beta_{\gamma}(x).
\end{align*}
Therefore,
\begin{equation}\label{eq: scalar eq for gamma}
    \arg\left(\gamma'(x)\right) + (m-2)\arg\left(\gamma(x)\right) = a\beta_{\gamma}(x) + c_{1}
\end{equation}
for some $c_{1}\in\mathbb{R}$. Similarly, we have
\begin{equation}\label{eq: scalar eq for xi}
        \arg\left(\xi'(y)\right) + (m-2)\arg\left(\xi(y)\right) = b\beta_{\xi}(y) + c_{2}
\end{equation}
for some $c_{2}\in\mathbb{R}$. Combining these with the expression for the Lagrangian angle in Proposition \ref{prop-lagrangianansatz} yields
\begin{align*}
    \check\theta(x, y) &= \arg\left(\gamma'(s)\right) + \arg\left(\xi'(y)\right)\\
    &= -(m-2)\arg\left(\gamma(x)\xi(y)\right) + a\beta_{\gamma}(x) + b\beta_{\xi}(y) + c
\end{align*}
for some $c = c_{1}+c_{2}\in\mathbb{R}$, and equation \eqref{eq-ansatzlagangleformula} is proved. Now, consider the case $a = -b \in\mathbb{R}$.
    By (\ref{eq: scalar eq for gamma}), (\ref{eq: scalar eq for xi}), we have
    \begin{align}
        a(\beta_{\gamma} - \beta_{\xi}) + c_{1}+c_{2} \, &= \,  \arg(\gamma'(x)\xi'(y)) + (m-2)\arg(\gamma(x)\xi(y)) \nonumber\\
        \implies a\im(z_{2})(x, y)  \,&=\, a(\beta_{\xi}(y) - \beta_{\gamma}(x))\nonumber\\
        &=\, -\left[\arg(\gamma'(x)\xi'(y)) + (m-2)\arg(\gamma(x)\xi(y)) - c_1 - c_2 \right]. \nonumber
    \end{align}
    Thus, 
    \begin{align}
        \check\theta(x, y) + (m-2)\arg(z_1)(x, y) + a\im(z_2)(x, y) = c_1 + c_2. \nonumber
    \end{align}
    The result now follows by choosing $a=0$ and $a=1$.

\end{proof}

Combining the results of this chapter with Corollary \ref{cor-cohom2translators}, we arrive at our main theorem.

\begin{theorem}\label{thm-slagsandtranslators}
    Assume we are in the setting of Assumption \ref{as1}, and let $\Phi:P \to \mathbb{C}$ be the isomorphism from Proposition \ref{prop-lagangle}. Let $I_1$ $I_2 \subset \mathbb{R}$ be open intervals, and $\gamma:I_1 \to \mathbb{C}$, $\xi:I_2 \to \mathbb{C}$ be smooth curves satisfying \eqref{eq-curveeqs} for $a, b \in \mathbb{R}$. Let $\check F$ be the map from Proposition \ref{prop-lagrangianansatz}.

    Then the image $L$ of the map 
    \[ F:G \times I_1 \times I_2 \to \mathbb{C}^{m-1} \times \mathbb{C}, \quad  F(g, x, y) = g \cdot (\Phi\oplus \text{Id})^{-1} \circ \check F(x,y) \]
    is an exact cohomogeneity-two $G$-invariant Lagrangian submanifold, immersed away from $\{0\} \times \mathbb{C}$. The restriction of the Liouville form $\lambda$ is given by:
    \begin{equation}
        F^*\lambda = d\left( -\frac{1}{2}(|\gamma(x)|^2 - |\xi(y)|^2)(\beta_\gamma(x) - \beta_\xi(y)) + \widetilde \beta_\gamma(x) + \widetilde \beta_\xi(y) \right).
    \end{equation} 
    Furthermore:
    \begin{itemize}
        \item If $a = b = 0$ then $L$ is a special Lagrangian, i.e. $\theta(F) = 0$.
        \item If $a = -b = 1$ then $L$ is a Lagrangian translator, i.e. $\theta(F) = -\langle F, Je_m\rangle$.
    \end{itemize}
\end{theorem}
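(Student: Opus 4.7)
The theorem is essentially an assembly of the three main technical results of this section: Proposition \ref{prop-lagrangianansatz} (the Castro--Lerma Ansatz produces a Lagrangian surface with an explicit primitive), Proposition \ref{prop-translatoransatz} (the curve equations \eqref{eq-curveeqs} make the Lagrangian angle have the right form), and Corollary \ref{cor-cohom2translators} (the reduction of the $G$-invariant special Lagrangian/translator equations to equations \eqref{eq-cohom2SL}, \eqref{eq-cohom2translator} on a profile surface in $\mathbb{C}^2$). The plan is therefore to chain these together carefully, with the main subtleties being (i) the composition with $(\Phi\oplus\text{Id})^{-1}$, (ii) the passage from the profile surface $\check L$ to $L = G\cdot \check L$, and (iii) verifying that the primitive of $F^*\lambda$ is the same as the one written down in Proposition \ref{prop-lagrangianansatz}.

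First I would record that $\check F$ is an immersion wherever $\gamma(x)\neq 0$, and a Lagrangian immersion into $\mathbb{C}^2$ by Proposition \ref{prop-lagrangianansatz}. Composing with the isometric complex-linear isomorphism $(\Phi\oplus\text{Id})^{-1}:\mathbb{C}^2\to P\times\mathbb{C}\subset\mathbb{C}^m$ from Proposition \ref{prop-lagangle} produces a Lagrangian immersion $\check L\subset (P\setminus\{0\})\times\mathbb{C}$. Applying any $g\in G\leq\SU(m-1)\subset\U(m)$ is a symplectomorphism, so $L=G\cdot\check L$ is a Lagrangian subset of $\mathbb{C}^m$; it lies in $\mu^{-1}(0)\times\mathbb{C}$ by construction. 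Standard cohomogeneity-two structure (from equations \eqref{eq-ambientspacedecomposition}, \eqref{eq-levelsettangentdecomposition} and the transversality argument used in the proof of Proposition \ref{prop-correspondence}) then upgrades this to the statement that $L$ is an immersed $G$-invariant cohomogeneity-two Lagrangian, smooth away from $\{0\}\times\mathbb{C}$.

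Next, to get the special Lagrangian and translator conclusions, I combine Proposition \ref{prop-translatoransatz} with Corollary \ref{cor-cohom2translators}. For $a=b=0$, the Lagrangian angle computed in \eqref{eq-ansatzlagangleformula} reduces to $\check\theta(x,y)=-(m-2)\arg(\gamma(x)\xi(y))+c$, which is exactly equation \eqref{eq-cohom2SL} up to the additive constant $c$; absorbing this constant into the choice of phase for $\Omega_0$ places $\check L$ in the correspondence of Corollary \ref{cor-cohom2translators}, so $L$ is special Lagrangian. For $a=-b=1$, Proposition \ref{prop-translatoransatz} gives $\check\theta(x,y)+(m-2)\arg(z_1)+\text{Im}(z_2)\equiv 0\pmod \pi$ after a translation in the $\text{Im}(z_2)$-direction (which preserves the Lagrangian class), matching \eqref{eq-cohom2translator}; the corollary then yields that $L$ is a Lagrangian translator.

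Finally, for exactness and the Liouville form identity, the key point is that on $\mu^{-1}(0)\times\mathbb{C}$ the Liouville form $\lambda$ vanishes on $G$-orbit directions: for $z'\in\mu^{-1}(0)$ and $X\in\mathfrak{g}$, $\lambda_{(z',z_m)}(\rho(X))=\langle iz',Xz'\rangle=-2\langle\mu(z'),X\rangle=0$. Since $\lambda$ is also $G$-invariant, the pullback $F^*\lambda$ on $G\times I_1\times I_2$ agrees with $\check F^*\lambda$ pulled back through the projection to $I_1\times I_2$, so the primitive computed in Proposition \ref{prop-lagrangianansatz} transports verbatim to $L$. The main obstacle I anticipate is ensuring the identification between the Lagrangian angle formula of Proposition \ref{prop-translatoransatz} (computed in the $\mathbb{C}^2$ model via $\Phi\oplus\text{Id}$) and the reduced equations \eqref{eq-cohom2translator}, \eqref{eq-cohom2SL} of Corollary \ref{cor-cohom2translators} is made with the correct normalisation of $\Phi$; this is exactly the content of the choice of $\Phi$ in the proof of Proposition \ref{prop-lagangle}, so no further adjustment is needed.
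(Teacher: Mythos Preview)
Your proposal is correct and follows essentially the same approach as the paper: assemble Propositions \ref{prop-lagrangianansatz} and \ref{prop-translatoransatz} with Corollary \ref{cor-cohom2translators}, then verify the Liouville form identity by showing $\lambda$ vanishes on orbit directions and is $G$-invariant. The only cosmetic difference is that you argue $\lambda(\rho(X))=0$ via the moment map relation on $\mu^{-1}(0)$, whereas the paper phrases the same fact using the orthogonal decomposition \eqref{eq-ambientspacedecomposition}; these are equivalent.
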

\begin{proof}
    The only statement still requiring justification regards the restriction of the Liouville form. This follows from equation \eqref{eq-restrictionofliouville}, along with the following claim, denoting by $\Psi$ the map $\Psi := (\Phi \oplus \text{Id})^{-1}$ and by $\pi$ the projection $\pi: G \times I_1 \times I_2 \to I_1 \times I_2$:
    \[ F^* \lambda \, = \, \pi^* \check F^* \Psi^* \lambda.\]
    To prove this claim, choose $(g,x,y) \in G \times I_1 \times I_2$, and denote $w := g \cdot \Psi\circ \check F(x,y)$. We evaluate each side of the equation in two cases:
    \begin{itemize}
        \item[a)] Assume $X \in T_g G$, so that $F_*(X) \in T_w \mathcal{O}_w$. Then by equation \eqref{eq-ambientspacedecomposition}, $F_*(X), JF_*(X) \perp P_w$, which implies that $\lambda(F_* X) = -\langle w, J F_* X\rangle \, = \, 0$. This matches the right-hand side as $\pi_* X = 0$.
        \item[b)] Assume $X \in T_{(x,y)}I_1 \times I_2$. Then by $G$-invariance of $\lambda$, 
        \[\lambda_w(F_* X) = \lambda_{\Psi \circ \check F(x,y)}((\Psi\circ \check F)_*X) \, = \,  (\pi^*\check F^* \Psi^* \lambda)(X). \]
    \end{itemize}
\end{proof}

\subsection{Classification of Admissible Curves}\label{sec-classification1}

We consider solutions to \eqref{eq-curveeqs}. For $a>0$, (\ref{eq-curveeqs}) implies that $\gamma$ is the profile curve of a cohomogeneity-one Lagrangian expander in $\mathbb{C}^{m-1}$, and that $\xi$ is the profile curve of a cohomogeneity-one Lagrangian shrinker in $\mathbb{C}^{m-1}$. If $a = 0$, $\gamma$ and $\xi$ are profile curves of cohomogeneity-one special Lagrangians. Such curves have been fully classified by Anciaux, Castro and Romon (see \cite{Anciaux2006, AnciauxRomon2009, AnciauxCastroRomon2006}). We summarise the classification here.

\begin{theorem}[Classification of Solutions to \eqref{eq-curveeqs}]\label{thm-curveeqssolutions}
    Up to rotation by $e^{i\phi}$, the only complete, properly immersed solutions to the equation
    \begin{equation}\label{eq-curveeq}
        \kappa_\gamma - (m-2)\frac{\gamma^\perp}{|\gamma^2|} = a\gamma^\perp
    \end{equation} 
    are:
    \begin{align*}
        &\text{Line, }a \in \mathbb{R}: 
         &&\tilde{c} (s) := s, \text{ for } s \in \mathbb{R}\\
        &\text{Lawlor neck profile, }a=0: &&\tilde l(s) := \frac{1}{\sqrt[m-1]{\cos((m-1)s)}\quad}\,e^{is}, \text{ for } s \in (-\tfrac{\pi}{2(m-1)}, \tfrac{\pi}{2(m-1)})\\
        &\text{(along with its scalings by $\mu \in \mathbb{R}^+$})\\
        &\text{Anciaux shrinker profile, } a<0: &&\tilde \sigma_{b,p,q}(s), \text{ for } s \in \mathbb{R}\\
        &\text{Anciaux expander profile, } a>0: &&\tilde \epsilon_{a,\alpha}(s), \text{ for } s \in (-\tfrac{\alpha}{2}, \tfrac{\alpha}{2})        
    \end{align*}
    (as well as unions thereof), where:
    \begin{itemize}
        \item $\tilde \sigma_{b,p,q}$ is the profile curve of the Anciaux shrinker at scale $b = -a$ with winding number $p$ and number of points of maximum curvature $q$ (see Figure \ref{fig-shrinker}).
        \item $\tilde \epsilon_{a,\alpha}$ is the profile curve of the Anciaux expander at scale $a$, centred on the positive real axis, spanning an angle $\alpha \in (0, \frac{\pi}{m-1})$ (see Figure \ref{fig-expander}).
    \end{itemize}
\end{theorem}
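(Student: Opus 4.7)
The plan is to parametrise $\gamma$ by arc length $s$, write it in polar form $\gamma(s) = r(s) e^{i\varphi(s)}$, and denote by $\eta(s)$ the angle between the unit tangent $T = \gamma'$ and the outward radial direction $e^{i\varphi}$, so that $r' = \cos\eta$ and $r\varphi' = \sin\eta$. Taking the inner product of \eqref{eq-curveeq} with the principal normal $N = iT$ yields the scalar relation $\kappa = \bigl((m-2)/r^2 + a\bigr)\,u$, where $u := \langle\gamma, N\rangle = -r\sin\eta$. Combining this with the Frenet-type identities $u' + v\kappa = 0$ and $v' - u\kappa = 1$ for $v := \langle\gamma, T\rangle = rr'$, a direct computation shows that the angular first integral
\[
I \;:=\; r^{m-1}\, e^{ar^2/2}\,\sin\eta
\]
is conserved along $\gamma$, i.e.\ $dI/ds \equiv 0$.

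Together with the arc-length constraint $\cos^2\eta + \sin^2\eta = 1$, conservation of $I$ reduces \eqref{eq-curveeq} to the separable first-order system
\[
r'(s) \,=\, \pm\sqrt{\,1 - I^2\, r^{-2(m-1)}\, e^{-ar^2}\,}, \qquad \varphi'(s) \,=\, I\, r^{-m}\, e^{-ar^2/2},
\]
so that a complete, properly immersed solution is specified, up to rotation, by the pair $(a, I)$ and a choice of sign branch. If $I = 0$, then $\sin\eta \equiv 0$, the curve is radial, and (up to rotation) equals the line $\tilde c$. If $I \neq 0$, the admissible range of $r$ is controlled by the effective potential $f(r) := r^{2(m-1)} e^{ar^2}$ via $I^2 \leq f(r)$, and we perform a case analysis on the sign of $a$.

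For $a \geq 0$, $f$ is strictly increasing on $(0,\infty)$, so $r$ runs monotonically from a unique minimum $r_*$ determined by $f(r_*) = I^2$ out to $+\infty$ on each branch, yielding a complete symmetric curve. In the case $a = 0$ the system admits the explicit closed form $\tilde l(s) = (\cos((m-1)s))^{-1/(m-1)} e^{is}$ (verified directly: writing $r^{m-1} = \sec((m-1)\varphi)$ with $\varphi = s$ gives $I^2 = 1$), recovering the Lawlor neck profile, and the scalings by $\mu \in \mathbb{R}^+$ account for the continuous family of minimum radii. For $a > 0$ the resulting curve is the Anciaux expander $\tilde\epsilon_{a,\alpha}$, whose total spanning angle
\[
\alpha \,=\, 2\int_{r_*}^\infty \frac{I\, r^{-m}\, e^{-ar^2/2}}{\sqrt{1 - I^2\, r^{-2(m-1)}\, e^{-ar^2}}}\, dr
\]
is a monotone function of $I$ whose image is the interval $(0, \pi/(m-1))$. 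For $a < 0$ (write $a = -b$) the potential $f(r) = r^{2(m-1)}e^{-br^2}$ has a single interior maximum $M$ at $r_0 = \sqrt{(m-1)/b}$: for $I^2 = M$ the solution is the circle $r \equiv r_0$, while for $I^2 \in (0, M)$ the radius oscillates between turning points $r_- < r_+$; the trajectory is a properly immersed \emph{closed} curve precisely when its angular period
\[
\Delta\varphi(I) \,=\, 2\int_{r_-}^{r_+} \frac{I\, r^{-m}\, e^{-ar^2/2}}{\sqrt{1 - I^2\, r^{-2(m-1)}\, e^{-ar^2}}}\, dr
\]
is a rational multiple $2\pi p/q$ of $2\pi$, yielding the Anciaux shrinker $\tilde\sigma_{b,p,q}$ of winding number $p$ and $q$ points of maximum curvature.

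The main obstacle is the shrinker case $a < 0$: one must show that $\Delta\varphi$ is a continuous, strictly monotone function of $I$ on $(0, \sqrt M)$ and compute its limiting values at the endpoints, so as to realise every admissible pair $(p,q)$. These quantitative estimates, together with the spanning-angle computation for the expander, are the content of the detailed phase-plane analysis in \cite{Anciaux2006, AnciauxRomon2009, AnciauxCastroRomon2006}. Finally, one verifies that no solutions are missed: conservation of $I$ with $I \neq 0$ forbids the curve from meeting the origin, so $\tilde c$ is the unique line-type orbit (up to rotation), completing the classification.
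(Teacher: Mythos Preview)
The paper does not give its own proof of this theorem: it is stated as a summary of the classification due to Anciaux, Castro and Romon, with the sentence ``Such curves have been fully classified by Anciaux, Castro and Romon (see \cite{Anciaux2006, AnciauxRomon2009, AnciauxCastroRomon2006})'' before the statement and ``See \cite{Anciaux2006} for the derivation of the Anciaux shrinkers and expanders'' after it. Your sketch is exactly the argument carried out in those references---the first integral $I = r^{m-1}e^{ar^2/2}\sin\eta$, the reduction to the separable radial system, and the angular-period analysis for $a<0$---so it is correct and matches the cited proofs, but there is nothing in the paper itself to compare it against.
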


\begin{figure}[t]
  \centering
  \begin{subfigure}[t]{0.3\textwidth}
    \centering
    \includegraphics[width=\linewidth]{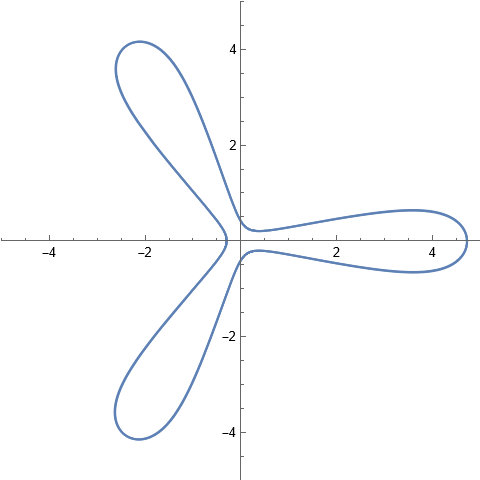}
    \caption{An Anciaux shrinker profile curve $\tilde \sigma_{a, p, q}$, with $p=1$, $q=3$.}
    \label{fig-shrinker}
  \end{subfigure}
  \hfill
  \begin{subfigure}[t]{0.3\textwidth}
    \centering
    \includegraphics[width=0.8\linewidth]{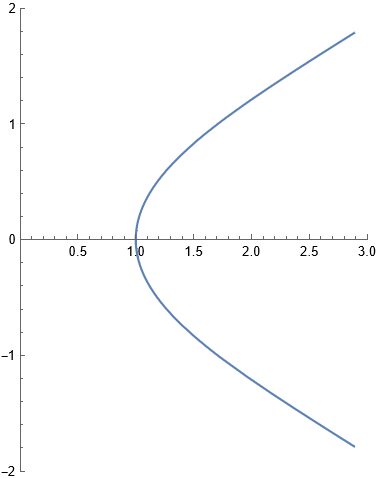}
    \caption{An Anciaux expander profile curve $\tilde \epsilon_{a,\alpha}$, with $\alpha \approx 1.1$.}
    \label{fig-expander}
  \end{subfigure}
  \hfill
  \begin{subfigure}[t]{0.3\textwidth}
    \centering
    \includegraphics[width=0.8\linewidth]{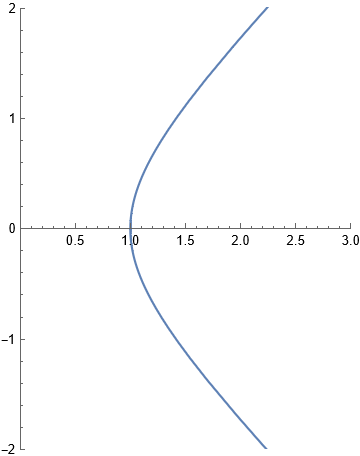}
    \caption{The Lawlor neck profile curve $\tilde l$.}
    \label{fig-lawlor}
  \end{subfigure}
  \caption{Three particular solutions to \eqref{eq-curveeq}, all in the case $m=3$.}
  \label{fig-curves}
\end{figure}

\noindent See \cite{Anciaux2006} for the derivation of the Anciaux shrinkers and expanders. Note that in the notation of Anciaux, $a = -b = -\lambda$ and $m-1 = n$.

\begin{remark}\label{rem-betaforthecurves}
    We briefly discuss the properties of $\beta_\gamma(s)$ for each of these curves. For the line $\tilde c$, $\beta$ is constant, so we may take it to be zero. Each of the other curves $\tilde l$, $\tilde \sigma_{b,p,q}$, $\tilde \epsilon_{a, \alpha}$ is a polar graph $\gamma(s) = r(s)e^{is}$ (e.g. the curves of Figure \ref{fig-curves}). It follows from Remark \ref{rem-betapositivity} that $\beta = \int_{s_0}^s |r(t)|^2 \,dt$, and is strictly increasing. 
    
    In the case of $\tilde l$ and $\tilde \epsilon_{a, \alpha}$ we choose $s_0 = 0$ so that $\beta(s) = -\beta(-s)$. For $\tilde \sigma_{b,p,q}$, it is clear that $\beta$ is unbounded in the positive and negative directions. For $\epsilon_{a,\alpha}$, one can integrate \eqref{eq-curveeqs} to find that the range of $\beta$ is $\frac{1}{a}(\pi - (m-1)\alpha)$.
\end{remark}

\subsection{Cohomogeneity-Two Special Lagrangians}\label{sec-classification2}

We now consider what properties the Lagrangian immersion of Proposition \ref{prop-translatoransatz} and corresponding cohomogeneity-two Lagrangian immersion of Theorem \ref{thm-slagsandtranslators} have with these choices of curves $\gamma, \xi$. For $a=b=0$, the options for the pair $(\gamma, \xi)$ are (without loss of generality, where $\phi \in \mathbb{R},\, \mu \in \mathbb{R}^+$):
\begin{itemize}
    \item[(a)] $(\gamma = e^{i\phi}\tilde c,\, \xi = \tilde c)$, i.e. both curves are lines through $0$. Then the image of the mapping
    \begin{equation}\label{eq-Fa}
        \check F_{(a)}(x,y) = \left(e^{i\phi}xy,\,\frac{1}{2}(y^2 - x^2)\right)
    \end{equation} 
    is simply a flat copy of $\mathbb{R}^2$ in 
    $\mathbb{C}^2$.
    \item[(b)] $(\gamma = \mu \tilde l, \, \xi = e^{i\phi}\tilde c)$, i.e. $\gamma$ is a Lawlor neck profile and $\xi$ is a line. Then:
    \begin{equation}\label{eq-Fb}
    \check F_{(b)}(x,y) = \left(e^{i\phi}\mu \, y\,  \tilde l (x),\,\frac{1}{2}(y^2-\mu^2|\tilde l (x)|^2) \, - \, i\mu^2 \beta_{\tilde l}(x) \right).
    \end{equation}
    \item[(c)] $(\gamma = e^{i\phi}\mu_1 \tilde l, \, \xi = \mu_2 \tilde l)$, i.e. $\gamma$ and $\xi$ are both Lawlor neck profiles. Then:
    \begin{equation}\label{eq-Fc}
    \check F_{(c)}(x,y) = \left(e^{i\phi}\mu_1\mu_2 \, \tilde l(x)\,  \tilde l (y),\,\frac{1}{2}(\mu_2^2|\tilde l (y)|^2 - \mu_1^2|\tilde l(x)|^2) \, + \, i(\mu_2^2 \beta_{\tilde l}(y) - \mu_1^2\beta_{\tilde l}(x) )\right).
    \end{equation}
    
\end{itemize}
The latter two can be seen to be exact Lagrangian embeddings of $\mathbb{R}^2$ into $\mathbb{C}^2$, by checking that the map $\check F$ is injective. By Theorem \ref{thm-slagsandtranslators}, these are `profile surfaces' corresponding to special Lagrangians in $\mathbb{C}^m$. Whether these special Lagrangians are immersed/embedded or not depends on the group $G$. In total, we have the following:

\begin{proposition}\label{prop-slclassification}
    In the setting of Assumption \ref{as1}, define the Lagrangians $L_{(b)}$, $L_{(c)}$ corresponding to the maps $\check F_{(b)}$, $\check F_{(c)}$ in \eqref{eq-Fb}, \eqref{eq-Fc} as the image of the map $F$ of Theorem \ref{thm-slagsandtranslators}.
    
    Then $L_{(b)}$, $L_{(c)}$ are exact special Lagrangian submanifolds. Furthermore: 
    \begin{itemize}
        \item $L_{(b)}$ is embedded if $G = \SO(m-1)$, and otherwise embedded away from a 1-dimensional singular set corresponding to $\{x=0\}$,
        \item $L_{(c)}$ is immersed for any $G$, and an embedding provided that the discrete group $C_k$ has order $k\leq m-1$.
    \end{itemize}
\end{proposition}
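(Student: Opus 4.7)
The plan is to verify the claims using Theorem \ref{thm-slagsandtranslators} together with the correspondence of Proposition \ref{prop-correspondence}, combined with the strict monotonicity of $\beta_{\tilde l}$ from Remark \ref{rem-betaforthecurves}. Both pairs $(\gamma,\xi)$ in Examples (b), (c) solve \eqref{eq-curveeqs} with $a=b=0$ by Theorem \ref{thm-curveeqssolutions}, so Theorem \ref{thm-slagsandtranslators} immediately yields that $L_{(b)}$ and $L_{(c)}$ are exact special Lagrangian submanifolds of $\mathbb{C}^m$. It remains to analyse the immersion/embedding structure of each.

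For $L_{(b)}$, the first coordinate $\gamma(x)\xi(y)=e^{i\phi}\mu y\,\tilde l(x)$ of $\check F_{(b)}$ vanishes exactly on $S:=\{y=0\}$. Away from $S$, I would show that $\check F_{(b)}$ is injective by reading off the parameters in order: the strictly monotonic quantity $-\mu^2\beta_{\tilde l}(x)$ appearing as the imaginary part of the second coordinate recovers $x$, the real part of the second coordinate then recovers $y^2$, and the first coordinate recovers $y$. For $G=\SO(m-1)$ one has $C_k=C_2$ (Example \ref{ex-so(m-1)}), and $\check F_{(b)}(x,-y)$ equals $(-1)\cdot\check F_{(b)}(x,y)$ in the first coordinate, realising the $C_2$-identification; Proposition \ref{prop-correspondence} then gives embeddedness of $L_{(b)}$ outside the singular curve. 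To extend across $S$, note that at $y=0$ the first coordinate of $\Psi\circ\check F_{(b)}$ lies in $\{0\}\subset\mathbb{C}^{m-1}$, so the $G$-orbit collapses. For $G=\SO(m-1)$ the cone-like orbit structure glues smoothly through the origin, exactly as $\mathbb{R}^{m-1}$ is smooth despite $\{0\}$ being a singular orbit of $\SO(m-1)$, so $L_{(b)}$ extends to a smooth embedding. For $G\neq\SO(m-1)$ the isotropy at $0\in\mathbb{C}^{m-1}$ strictly exceeds the generic isotropy, producing the claimed one-dimensional singular curve. This gluing dichotomy is the most delicate step of the proof.

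For $L_{(c)}$, since $\tilde l$ is nowhere zero, the first coordinate of $\check F_{(c)}$ never vanishes, the $G$-orbit type on $\Psi\circ\check F_{(c)}$ is uniform, and so $L_{(c)}$ is always immersed. Injectivity of $\check F_{(c)}$ follows from the same $\beta_{\tilde l}$-monotonicity argument, with the swap $(x,y)\leftrightarrow(y,x)$ preserving the first coordinate but flipping the sign of the imaginary part of the second coordinate. Embeddedness reduces, via Proposition \ref{prop-correspondence}, to whether the $C_k$-action on $P\cong\mathbb{C}$ identifies distinct points of $\mathrm{Image}(\check F_{(c)})$. Writing $\tilde l(s)=r(s)e^{is}$ on $s\in(-\tfrac{\pi}{2(m-1)},\tfrac{\pi}{2(m-1)})$, the argument of the first coordinate equals $\phi+x+y$ and ranges over an open interval of length $2\pi/(m-1)$. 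A non-trivial element $\zeta=e^{2\pi ij/k}\in C_k$ with $1\leq j\leq k-1$ can identify two image points only if $2\pi j/k\pmod{2\pi}$ lies in $(-2\pi/(m-1),2\pi/(m-1))$; since $\min(j,k-j)/k\geq 1/k$, this is impossible whenever $k\leq m-1$, and so $L_{(c)}$ is embedded. For $k\geq m$ the rotation $2\pi/k$ strictly fits inside the argument range and one may exhibit parameters with coinciding images, so $L_{(c)}$ fails to be embedded.
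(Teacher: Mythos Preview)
Your treatment of $L_{(b)}$ is essentially the paper's argument. For $L_{(c)}$ you take a different route: you first use the observation that $\arg(z_1)=\phi+x+y$ ranges over an open interval of length $2\pi/(m-1)$ to rule out any nontrivial $C_k$-rotation when $k\le m-1$, and then appeal separately to injectivity of $\check F_{(c)}$. The paper instead writes out the four scalar equations coming from $e^{i\varphi}\check F_{(c)}(x_1,y_1)=\check F_{(c)}(x_2,y_2)$, uses the modulus of $z_1$ together with $\mathrm{Re}(z_2)$ to force $r(x_1)=r(x_2)$ and $r(y_1)=r(y_2)$, then uses $\mathrm{Im}(z_2)$ and the oddness of $\beta_{\tilde l}$ to reduce to $x_2=-x_1,\ y_2=-y_1$ with $x_1,y_1>0$, and finally derives a contradiction with the $\arg(z_1)$-equation for $k\le m-1$. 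Your argument-range reduction is cleaner for isolating the role of $k$; the paper's approach has the advantage of handling the $C_k$-check and injectivity of $\check F_{(c)}$ in one computation.

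There is, however, a genuine gap in your injectivity step for $\check F_{(c)}$. The ``same $\beta_{\tilde l}$-monotonicity argument'' from $(b)$ does not transfer: in $(b)$ the imaginary part of $z_2$ is $-\mu^2\beta_{\tilde l}(x)$ and recovers $x$ directly, whereas in $(c)$ it is $\mu_2^2\beta_{\tilde l}(y)-\mu_1^2\beta_{\tilde l}(x)$, a difference of two monotone quantities, which by itself determines neither parameter. Your swap remark $(x,y)\leftrightarrow(y,x)$ only behaves as you describe when $\mu_1=\mu_2$; for general $\mu_1\neq\mu_2$ it neither preserves $z_1$ nor flips $\mathrm{Im}(z_2)$. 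To close the gap you must bring in both $|z_1|$ and $\mathrm{Re}(z_2)$ to obtain $r(x_1)=r(x_2)$, $r(y_1)=r(y_2)$ (hence $x_2=\pm x_1$, $y_2=\pm y_1$), and then use $\mathrm{Im}(z_2)$ together with the oddness of $\beta_{\tilde l}$ to eliminate the sign-flipped possibilities---exactly the computation the paper carries out. Finally, your closing assertion that $L_{(c)}$ \emph{fails} to be embedded for $k\ge m$ goes beyond the proposition and is not substantiated: matching $\arg(z_1)$ under a small rotation does not by itself produce a coincidence in the full second coordinate.
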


\begin{proof}
    Given $\check F:I_1\times I_2 \to\mathbb{C}^2$, we define the map $F:G \times I_1 \times I_2 \to \mathbb{C}^m$ as in Theorem \ref{thm-slagsandtranslators}. 
    
    In the case of $(b)$, since $(\Phi \oplus \text{Id})^{-1}(\check L)$ intersects the singular orbit $\{0\}\times \mathbb{C} \subset \mu_{\mathbb{C}^m}^{-1}(0)$, the only way the image $L$ of $F$ can be nonsingular is if $G = \SO(m-1)$, in which case $L$ has the topology of $\mathbb{R}^m$ near $x=0, y=0$. In this case, embeddedness of $L$ corresponds to embeddedness of $C_2 \cdot \check L \subset \mathbb{C}^2$. Since $\check L_{(b)}$ is $C_2$-symmetric by the domain mapping $x \mapsto -x$, this is clear.

    For $(c)$, $\check L$ does not intersect the singular orbit, so we may consider a general group $G$ satisfying $|C_k| \leq m-1$. Then by Proposition \ref{prop-correspondence}, $L$ is immersed, and embedded if and only if $C_k \cdot \check L$ is embedded. To show this, assume that $e^{i\varphi}\cdot \check F_{(c)}(x_1, y_1) \, = \, \check F_{(c)}(x_2, y_2)$, for $e^{i\varphi} \in C_k$. Using polar coordinates $\tilde l(x) = r(x)e^{ix}$, this gives the following four equations:
    \begin{itemize}
        \item[1.] $\varphi + x_1 + y_1 \, \equiv \, x_2 + y_2 \quad (\text{mod }2\pi)$,
        \item[2.] $\mu_1\mu_2r(x_1)r(x_2) \, = \, \mu_1 \mu_2 r(x_2)r(y_2)$,
        \item[3.] $\mu_2^2r(y_1)^2 - \mu_1^2 r(x_1)^2 \, = \, \mu_2^2r(y_2)^2 - \mu_1^2 r(x_2)^2$,
        \item[4.] $\mu_2^2 \beta_{\tilde l}(y_1) - \mu_1^2 \beta_{\tilde l}(x_1) \, = \, \mu_2^2 \beta_{\tilde l}(y_2) - \mu_1^2 \beta_{\tilde l}(x_2)$.
    \end{itemize}
    Combining equations 2 and 3 gives the single complex equation
    \begin{align}
         (\mu_2 r(y_1) + i\mu_1 r(x_1))^2 \, &= \, (\mu_2 r(y_2) + i\mu_1 r(x_2))^2 \notag \\
         \implies r(y_1) \, = r(y_2), \,\,r(x_1) \, &= \, r(x_2). \label{eq-sameradius}
    \end{align}
    \indent Assume first that $x_1 \neq x_2$, without loss of generality we may assume $x_1 > x_2$. By \eqref{eq-sameradius} we see that $x_2 = -x_1$, and by Remark \ref{rem-betaforthecurves} we have $\beta_{\tilde l}(x_2) = -\beta_{\tilde l}(x_1)$. Equation 4 now gives that $\mu_2^2(\beta_{\tilde l}(y_1) - \beta_{\tilde l}(y_2)) \, = \, 2\mu_1^2 \beta_{\tilde l}(x_1) \, > 
    \, 0$, so we must have $y_1 > y_2$, indeed by \eqref{eq-sameradius} $y_2 = -y_1$. Now we have $ 0 < 2x_1 + 2y_1 < \tfrac{2\pi}{m-1}$, which contradicts equation 1 since $\varphi + 2x_1 + 2y_1$ cannot be an integer for $e^{i\varphi} \in C_k$, $k \leq m-1$. 
    
    Therefore, we must have $x_1 = x_2$, which by equations 1 and 4 imply $y_1 = y_2$ and $e^{i\varphi} = 1$.
\end{proof}

\begin{remark} For example, $\check F_{(c)}$ corresponds to a special Lagrangian embedding when $G = SO(m-1)$ and $T^{m-2}$, since in those cases $C_k = C_2$ and $C_{m-1}$ respectively (see Examples \ref{ex-so(m-1)} and \ref{ex-t(m-2)}). Indeed, since in general we have $k|2(m-1)$ by Proposition \ref{prop-levelsetgeometry} c), the assumption $k \leq m-1$ only fails when $k = 2(m-1)$. The authors are unaware of any group actions with this property.
\end{remark}

\subsection{Cohomogeneity-Two Lagrangian Translators}\label{sec-classification3}

For $a=-b = 1$, the options for the pair $(\gamma, \xi)$ are (without loss of generality, and suppressing the subscripts $a, p, q, \alpha$ for the Anciaux curves):

\begin{itemize}
    \item[d)] $(\gamma = e^{i\phi}\tilde c, \, \xi = \tilde \sigma)$, i.e. $\gamma$ is a line and $\xi$ is an Anciaux shrinker profile. Then:
    \begin{equation}\label{eq-Fd}
        \check F_{(d)}(x,y) = \left(e^{i\phi} \,x \, \tilde \sigma(y), \, \frac{1}{2}(|\tilde \sigma(y)|^2 - x^2) \, + \, i\beta_{\tilde \sigma}(y)\right).  
    \end{equation}
    \item[e)] $(\gamma =  \tilde\epsilon, \, \xi = e^{i\phi}\tilde c)$, i.e. $\gamma$ is an Anciaux expander profile and $\xi$ is a line. Then:
    \begin{equation}\label{eq-Fe}
    \check F_{(e)}(x,y) = \left(e^{i\phi} \,y \, \tilde \epsilon(x), \, \frac{1}{2}(y^2 - |\tilde \epsilon(x)|^2 ) \, - \, i\beta_{\tilde \epsilon}(x)\right).
    \end{equation}
    \item[f)] $(\gamma = e^{i\phi} \tilde \epsilon(x), \, \xi = \tilde \sigma(y))$, i.e. $\gamma$ is an Anciaux expander profile and $\xi$ is an Anciaux shrinker profile. Then:
    \begin{equation}\label{eq-Ff}
    \check F_{(f)}(x,y) = \left(e^{i\phi}  \, \, \tilde \epsilon(x) \tilde \sigma(y), \, \frac{1}{2}(|\tilde \sigma(y)|^2 - |\tilde \epsilon(x)|^2 ) \, + \, i(\beta_{\tilde \sigma}(y) - \beta_{\tilde \epsilon}(x))\right).
    \end{equation}
\end{itemize}
Among these, d) and e) can be seen to be exact Lagrangian embeddings of $\mathbb{R}^2$ into $\mathbb{C}^2$ as in the $a=b=0$ case, and $f)$ can be seen to be an exact Lagrangian immersion. Note that this is true for d) despite $\tilde \sigma_{a,p,q}$ not necessarily being embedded itself for all values of $(p, q)$ - the Ansatz `unwinds' the shrinker into an embedded surface.

By Theorem \ref{thm-slagsandtranslators}, the immersions $\check F$ in d), e) and f) are `profile surfaces' corresponding to Lagrangian translators in $\mathbb{C}^m$. Note that the Lagrangian angle of the corresponding Lagrangian translators is negative the imaginary part of the second coordinate of $\check F$. Therefore, the translators corresponding to d) and f) are never almost-calibrated, whereas Remark \ref{rem-betaforthecurves} shows the translator corresponding to e) is almost-calibrated. Depending on the group $G$, the corresponding translators are embedded submanifolds of $\mathbb{C}^m$:

\begin{proposition}\label{prop-translatorclassification}
    In the setting of Assumption 1, define the Lagrangians $L_{(d)}$, $L_{(e)}$, $L_{(f)}$ corresponding to the maps $\check F_{(d)}$, $\check F_{(e)}$, $\check F_{(f)}$ in \eqref{eq-Fd}, \eqref{eq-Fe}, \eqref{eq-Ff} respectively as the image of the map $F$ of Theorem \ref{thm-slagsandtranslators}. 
    
    Then $L_{(d)}$, $L_{(e)}$, $L_{(f)}$ are exact zero-Maslov Lagrangian translating solitons, and $L_{(e)}$ is almost-calibrated. Furthermore: 
    \begin{itemize}
        \item $L_{(d)}, L_{(e)}$ are embeddings if $G = \SO(m-1)$, and otherwise embeddings away from a 1-dimensional singular set corresponding to $\{x=0\} , \{y=0\}$ respectively,
        \item $L_{(f)}$ is an immersion for any $G$.
    \end{itemize}
\end{proposition}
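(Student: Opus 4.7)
The plan is to mirror the proof of Proposition \ref{prop-slclassification}, specialising to $a = -b = 1$. Each map $\check F_{(d)}, \check F_{(e)}, \check F_{(f)}$ satisfies the hypotheses of Theorem \ref{thm-slagsandtranslators} for these values of $a, b$, so the theorem immediately produces exact, $G$-invariant, cohomogeneity-two Lagrangian translating solitons $L_{(d)}, L_{(e)}, L_{(f)}$. The zero-Maslov property is automatic from the translator identity $\theta(F) + \langle F, Je_m\rangle = c$ of Section \ref{sec-lagtrans}, which lifts the Lagrangian angle to a globally defined real-valued function.

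Next, for the almost-calibratedness of $L_{(e)}$, I would observe that under the identification $\Psi = (\Phi\oplus\text{Id})^{-1}$ of Theorem \ref{thm-slagsandtranslators}, the last coordinate of $F$ agrees with the second coordinate of $\check F_{(e)}$. Hence $\im(F_m) = -\beta_{\tilde\epsilon}(x)$, and the translator equation gives $\theta = \beta_{\tilde\epsilon}(x)$ up to an additive constant. By Remark \ref{rem-betaforthecurves}, $\beta_{\tilde\epsilon}$ has range $\frac{1}{a}(\pi - (m-1)\alpha) = \pi - (m-1)\alpha$, which is strictly less than $\pi$, as required. For $L_{(d)}$ and $L_{(f)}$, the analogous computation expresses $\theta$ in terms of $\beta_{\tilde\sigma}$, which by Remark \ref{rem-betaforthecurves} is unbounded in both directions, so these cannot be almost-calibrated.

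For the embeddedness statements, I would invoke Proposition \ref{prop-correspondence} to reduce to understanding the profile surface $\check L := \check F(I_1\times I_2)$, its behaviour under the $C_k$-action, and its intersection with the singular orbit $\{0\}\times\mathbb{C} \subset P \times \mathbb{C}$. For $L_{(d)}$ the first coordinate $e^{i\phi}x\tilde\sigma(y)$ vanishes precisely on $\{x = 0\}$ (since $\tilde\sigma$ avoids the origin), so $\check L_{(d)}$ meets the singular orbit exactly there; when $G = \SO(m-1)$ the $\SO(m-1)$-orbit collapses to a point as $x \to 0$ and the induced $C_2$-symmetry $x \mapsto -x$ matches Proposition \ref{prop-slclassification}(b) to give a smooth embedded disc, while for other $G$ the locus $\{x=0\}$ persists as a one-dimensional singular set. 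Away from $\{x = 0\}$, injectivity of $F$ modulo $C_k$ is verified exactly as in Proposition \ref{prop-slclassification}(c): if $e^{i\varphi}\check F_{(d)}(x_1,y_1) = \check F_{(d)}(x_2,y_2)$ for some $e^{i\varphi}\in C_k$, then comparing the imaginary parts of the second coordinate yields $\beta_{\tilde\sigma}(y_1) = \beta_{\tilde\sigma}(y_2)$, hence $y_1 = y_2$ by the strict monotonicity of $\beta$ (Remark \ref{rem-betapositivity}), and the remaining scalar equations force $x_1 = \pm x_2$ with $e^{i\varphi}\in\{\pm 1\}$, absorbed into $C_2$. The argument for $L_{(e)}$ is entirely analogous with the roles of $x, y$ swapped and the singular locus at $\{y=0\}$. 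For $L_{(f)}$, both $\tilde\epsilon$ and $\tilde\sigma$ avoid the origin, so $\check L_{(f)}$ does not meet the singular orbit; combined with Proposition \ref{prop-lagrangianansatz} this gives that $\check F_{(f)}$ is a Lagrangian immersion, so $L_{(f)}$ is an immersion for any $G$.

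The main technical obstacle I anticipate lies in the embeddedness analysis for $L_{(d)}$ and $L_{(e)}$: the shrinker profile $\tilde\sigma_{a,p,q}$ need not itself be embedded in $\mathbb{C}$, so one must verify that the Ansatz \emph{unwinds} the shrinker via the extra coordinates $\frac{1}{2}(|\tilde\sigma|^2 - x^2)$ and $\beta_{\tilde\sigma}$ into a globally embedded profile surface away from $\{x=0\}$. The monotonicity of $\beta_{\tilde\sigma}$ (Remark \ref{rem-betapositivity}) is precisely the tool that rules out the winding-induced identifications, so the technical crux is bookkeeping this monotonicity together with the discrete $C_k$-action.
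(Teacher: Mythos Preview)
Your proposal is correct and follows essentially the same approach as the paper's own proof, which is in fact only a sketch of the $\check F_{(d)}$ case for $G = \SO(m-1)$: compare second coordinates to get $\beta_{\tilde\sigma}(y_1) = \beta_{\tilde\sigma}(y_2)$, invoke strict monotonicity of $\beta_{\tilde\sigma}$ to force $y_1 = y_2$, and then read off $x_1 = \pm x_2$ with $e^{i\varphi} \in \{\pm 1\}$ from the remaining equations. Your treatment of almost-calibratedness, the singular-set discussion for $G \neq \SO(m-1)$, and the immersion claim for $L_{(f)}$ are likewise in line with the paper's reasoning (much of which appears in the text preceding the proposition rather than in the proof itself).
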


\begin{remark}
    The example $L_{(e)}$ with $G = \SO(m-1)$ is the symmetric Joyce--Lee--Tsui translator (see \cite{Joyce2010}). By considering a suitable sequence of expander profile curves $\epsilon_{a_i, \alpha_i}$ converging to a Lawlor neck profile curve $\tilde l$, the translator $L_{(e)}$ converges to the special Lagrangian $L_{(b)}$.
\end{remark}

\begin{proof}
    We sketch the argument for $\check F_{(d)}$ only, in the case where $G = \SO(m-1)$. Since $\check F$ has $C_2$-symmetry achieved by the mapping $x \mapsto -x$, to show embeddedness we must show that if $e^{i\varphi}\check F_{(d)}(x_1,y_1) = \check F_{(d)}(x_2, y_2)$, then either $\varphi \equiv 0 \,\,(\text{mod } 2\pi), x_1 = x_2, y_1 = y_2$, or $\varphi \equiv \pi \,\,(\text{mod } 2\pi), x_1 = -x_2, y_1 = y_2$.
    When the equality holds, we have the three equations
    \begin{align*}
        e^{i\varphi}x_1 \tilde \sigma(y_1) = x_2 \tilde \sigma(y_2), \quad |\tilde \sigma(y_1)|^2 - x_1^2 = |\tilde \sigma(y_2)|^2 - x_2^2, \quad \beta_{\tilde \sigma}(y_1) = \beta_{\tilde \sigma}(y_2).
    \end{align*}
    By Remark \ref{rem-betaforthecurves}, $\beta_{\tilde \sigma}$ is strictly increasing, and so we must have $y_1 = y_2$. The result follows from the other two equations.
\end{proof}

\section{Affine-Invariant Cohomogeneity-One Lagrangian Translators} \label{sec-moreexamples}

Finally, we describe a construction of explicit cohomogeneity-one examples of Lagrangian translators corresponding to an affine group action, using the level set method by Harvey--Lawson \cite{HL82}.

\subsection{$\widetilde{\U(1)}^{2}$-invariant Examples}

Consider the action of $G := \widetilde{\U(1)}^{2}\simeq\mathbb{R}^{2}$ on $\mathbb{C}^{3}$ by
\begin{align*}
	(e^{i\theta_{1}}, e^{i\theta_{2}})\cdot(z_{1}, z_{2}, z_{3}) = (e^{i\theta_{1}}z_{1}, e^{i\theta_{2}}z_{2}, z_{3} - i(\theta_{1} + \theta_{2})).
\end{align*}
By Proposition \ref{prop-affinegrouppreservingomegaf}, $\widetilde {\U(1)}^2 \leq G_f$, and so the action preserves $\Omega_f$. Define a map $\mu:\mathbb{C}^{3}\to\mathbb{R}^{3}$  by
\begin{align*}
	\mu(z_{1}, z_{2}, z_{3}) = \left(\re z_{3} - \frac{1}{2}|z_{1}|^{2}, \re z_{3} - \frac{1}{2}|z_{2}|^{2}, \im\left(z_{1}z_{2}e^{z_{3}}\right)\right).
\end{align*}
Let $\mu = (\mu_{1}, \mu_{2}, \mu_{3})$, then in terms of real coordinates of  $\mathbb{C}^{3}\simeq\mathbb{R}^{6}$,
\begin{align*}
	&\mu_{1}(z_{1}, z_{2}, z_{3}) := \frac{1}{2}(z_{3} + \overline{z_{3}} - |z_{1}|^{2}) = x_{3} - \frac{x_{1}^{2} + y_{1}^{2}}{2},\\
	&\mu_{2}(z_{1}, z_{2}, z_{3}) := \frac{1}{2}(z_{3} + \overline{z_{3}} - |z_{2}|^{2}) = x_{3} - \frac{x_{2}^{2}+y_{2}^{2}}{2},\\
	&\mu_{3}(z_{1}, z_{2}, z_{3}) = \frac{1}{2i}(z_{1}z_{2}e^{z_{3}} - \overline{z_{1}z_{2}}e^{\overline{z_{3}}}) = e^{x_{3}}[(x_{1}y_{2}+x_{2}y_{1})\cos y_{3}+(x_{1}x_{2} - y_{1}y_{2})\sin y_{3}].
\end{align*}
It is straightforward to check that $d\mu:\mathbb{C}^{3}\to\mathbb{R}^{3}$ is surjective if and only if $(z_1 , z_2)\neq (0, 0)$, and the Poisson brackets $\{\mu_{j}, \mu_{k}\}$ vanish for all $j, k = 1, 2, 3$. Hence, by \cite[Section III.2.C]{HL82}, the smooth part of $L_{a, b, c} := \mu^{-1}(a, b, c)$ is Lagrangian. Notice that $(\mu_{1}, \mu_{2})$ is the moment map of the $G
$ action, and $\mu_{3}$ can be viewed as a generalized moment map corresponding to $\im\Omega_{f}$. Now $L_{a, b, c}$ is the intersection of $3$ level sets $\mu_{1}^{-1}(a)\cap\mu_{2}^{-1}(b)\cap\mu_{3}^{-1}(c)$, so it is generically real $3$-dimensional. Let $z = (z_{1}, z_{2}, z_{3})\in L_{a, b, c}$, then $T_{z}^{\perp}L_{a, b, c}$ is spanned by $\{\nabla\mu_{1}(z), \nabla\mu_{2}(z), \nabla\mu_{3}(z)\}$. Since $L_{a, b, c}$ is Lagrangian, $T_{z}L_{a, b, c}$ is spanned by
\begin{align*}
	J\nabla\mu_{j} = J\left\{\sum_{k=1}^{3}\frac{\partial\mu_{j}}{\partial x_{k}}\frac{\partial}{\partial x_{k}} + \frac{\partial\mu_{j}}{\partial y_{k}}\frac{\partial}{\partial y_{k}}\right\} = \sum_{k=1}^{3}\frac{\partial\mu_{j}}{\partial x_{k}}\frac{\partial}{\partial y_{k}} - \frac{\partial\mu_{j}}{\partial y_{k}}\frac{\partial}{\partial x_{k}},\quad j = 1, 2, 3.
\end{align*}
Note that 
\begin{align*}
	2i\frac{\partial\mu_{j}}{\partial\overline{z}_{k}} = -\frac{\partial\mu_{j}}{\partial y_{k}}+ i\frac{\partial\mu_{j}}{\partial x_{k}},
\end{align*}
so the matrix $M_{jk} = 2i\frac{\partial\mu_{j}}{\partial\overline{z}_{k}}$ sends the standard basis $\{e_{1}, e_{2}, e_{3}\}$ to $\{J\nabla\mu_{1}(z), J\nabla\mu_{2}(z), J\nabla\mu_{3}(z)\}$. Therefore, the Lagrangian angle $\theta$ is given by the argument of
\begin{align*}
	\det M_{jk} = \frac{e^{\overline{z}_{3}}}{8i}\left(|z_{1}|^{2} + |z_{2}|^{2} + |z_{1}|^{2}|z_{2}|^{2}\right),
\end{align*}
which is $-\im z_{3} - \frac{\pi}{2}$. Hence, we have $\theta + \im z_{3} = -\frac{\pi}{2}$ is a constant, that is, (the smooth part of) $L_{a, b, c}$ is a Lagrangian translating soliton.

\subsection{A Lagrangian translator fibration}

Since $d\mu:\mathbb{C}^{3}\to\mathbb{R}^{3}$ is not surjective on $\{(0, 0, z_3)\:|\:z_3\in\mathbb{C}\}$, the fibers $L_{a, b, c} := \mu^{-1}(a, b, c)$ over $\Gamma := \{\mu(0, 0, z_3)\:|\:z_3\in\mathbb{C}\} = \{(a, a, 0)\:|\:a\in\mathbb{R}\}$ are singular. Indeed, for any $a\in\mathbb{R}$,
\begin{align*}
	L_{a, a, 0} &= \left\{\left(z_{1}, z_{2}, z_{3}\right)\::\:\re z_{3} - \frac{1}{2}|z_{1}|^{2} = \re z_{3} - \frac{1}{2}|z_{2}|^{2} = a,\; \im(z_{1}z_{2}e^{z_{3}}) = 0\right\}\\
	&= \left\{\left(z_{1}, z_{2}, z_{3}\right)\::\:|z_{1}|^{2} = |z_{2}|^{2} = r^{2}, \;\re z_{3} = a+\frac{r^{2}}{2},\; \im(z_{1}z_{2}e^{z_{3}}) = 0\right\}\\
	&=\left\{\left(re^{i\theta_{1}}, re^{i\theta_{2}}, a+\frac{r^{2}}{2} -i(\theta_{1} + \theta_{2}) \right)\::\:r\geq 0, \;(\theta_{1}, \theta_{2})\in\mathbb{R}^{2}\right\}
\end{align*}
has a $1$-dimensional singular set $\{(0, 0, a + it)\:|\:t\in\mathbb{R}\}$, and it is \emph{Hamiltonian stationary}, namely, its Lagrangian angle is harmonic. 

We now describe the geometry of a general fiber. Let $(a, b, c)\in\mathbb{R}^{3}$. Then
\begin{align}
    L_{a, b, c} &= \left\{\left(z_{1}, z_{2}, z_{3}\right)\::\:\re z_{3} - \frac{1}{2}|z_{1}|^{2} = a, \re z_{3} - \frac{1}{2}|z_{2}|^{2} = b,\; \im(z_{1}z_{2}e^{z_{3}}) = c\right\}\nonumber\\
    &=\left\{\left(z_{1}, z_{2}, \frac{a+b}{2} + \frac{|z_1|^{2}+|z_2|^{2}}{4} - i\theta(z_1, z_2)\right)\::\:|z_1|^{2} - |z_2|^{2} = 2(b-a)\right\},
\end{align}
where the Lagrangian angle is given by
\begin{align}\label{eq: imaginary part of Labc}
    \theta(z_1, z_2) = \arg(z_1 z_2)-\arcsin\left[\frac{c\cdot {\exp\left(-\frac{a+b}{2}-\frac{|z_1|^{2}+|z_2|^{2}}{4}\right)}}{|z_1 z_2|}\right].
\end{align}
One can view $L_{a, b, c}$ as a graph over the  submanifold
\begin{align}
    M_{2(b-a)} := \{(z_1, z_2)\in\mathbb{C}^{2}\::\:|z_1|^{2} - |z_2|^{2} = 2(b-a)\}.
\end{align}
Note that $M_{2(b-a)}$ is diffeomorphic to $\SU(1, 1)\simeq \text{SL}_{2}(\mathbb{R})$ if $a\neq b$, and is a $\mathbb{T}^{2}$-cone $M_{0}$ if $a = b$. Now $\text{SL}_{2}(\mathbb{R})$ acts on the upper half plane $\mathbb{H}^{2}$ transitively by M\"obius transform, with stabilizer $\SO(2)\simeq\mathbb{S}^{1}$. Hence, for $a\neq b$, $M_{2(b-a)}$ is a circle bundle over $\mathbb{H}^{2}$, which is diffeomorphic to $\mathbb{R}^{2}\times\mathbb{S}^{1}$. The translator $L_{a, b, c}$ is diffeomorphic to $\mathbb{R}^{3}$ if $a\neq b$, and it can be visualized as a `helical staircase', with each `step' corresponding to the graph of the function $h(z_1, z_2) = \frac{|z_2|^{2}+|z_2|^{2}}{4}$ over $M_{2(b-a)}$. To make $L_{a, b, c}$ complete, one needs to lift $M_{2(b-a)}$ to its universal cover, which lift its circle fibers to $\mathbb{R}$, and one also needs to lift the argument function in (\ref{eq: imaginary part of Labc}) to be multivalued.

\bibliographystyle{amsplain}
\bibliography{reference}

\end{document}